\theoremstyle{plain}
\newtheorem{thm}{Theorem}[section]
\newtheorem{cor}[thm]{Corollary}
\newtheorem{lemma}[thm]{Lemma}
\newtheorem{prop}[thm]{Proposition}
\theoremstyle{definition}
\newtheorem{definition}[thm]{Definition}
\theoremstyle{remark}
\newtheorem{remark}[thm]{Remark}
\newtheorem{example}[thm]{Example}
\numberwithin{equation}{section}
\DeclareMathSymbol{\rtimes}{\mathbin}{AMSb}{"6F}
\def\ibind#1{\mathop{#1\mathord{\mathop{\text{--}}}}\!\Ind\nolimits}
\newcommand\T{\mathbf{T}}
\newcommand\set[1]{\{\,#1\,\}}
\newcommand\sset[1]{\{#1\}}
\def\<{\langle}
\def\>{\rangle}
\let\ipscriptstyle=\scriptscriptstyle
\def\lipsqueeze{{\mskip -3.0mu}}
\def\ripsqueeze{{\mskip -3.0mu}}
\def\ipcomma{\nobreak\mathrel{,}\nobreak}
\newbox\ipstrutbox
\def\ipstrut{\copy\ipstrutbox}
\def\lip#1<#2,#3>{\mathopen{\relax_{\ipstrut\ipscriptstyle{
#1}}\lipsqueeze
\langle} #2\ipcomma #3 \rangle}
\def\blip#1<#2,#3>{\mathopen{\relax_{\ipstrut
\ipscriptstyle{ #1}}\lipsqueeze\bigl\langle} #2\ipcomma #3 \bigr\rangle}
\def\rip#1<#2,#3>{\langle #2\ipcomma #3
\rangle_{\ripsqueeze\ipstrut\ipscriptstyle{#1}}}
\def\brip#1<#2,#3>{\bigl\langle #2\ipcomma #3
\bigr\rangle_{\ripsqueeze\ipstrut\ipscriptstyle{#1}}}
\def\angsqueeze{\mskip -6mu}
\def\smangsqueeze{\mskip -3.7mu}
\def\trip#1<#2,#3>{\langle\smangsqueeze\langle #2\ipcomma #3
\rangle\smangsqueeze\rangle_{\ripsqueeze\ipstrut\ipscriptstyle{#1}}}
\def\btrip#1<#2,#3>{\bigl\langle\angsqueeze\bigl\langle #2\ipcomma
#3
\bigr\rangle
\angsqueeze\bigr\rangle_{\ripsqueeze\ipstrut\ipscriptstyle{#1}}}
\def\tlip#1<#2,#3>{\mathopen{\relax_{\ipstrut\ipscriptstyle{
#1}}\lipsqueeze \langle\smangsqueeze\langle} #2\ipcomma #3
\rangle\smangsqueeze\rangle}
\def\btlip#1<#2,#3>{\mathopen{\relax_{\ipstrut\ipscriptstyle{
#1}}\lipsqueeze
\bigl\langle\angsqueeze\bigl\langle} #2\ipcomma #3
\bigr\rangle\angsqueeze\bigr\rangle}
\def\ip(#1|#2){(#1\mid #2)}
\def\bip(#1|#2){\bigl(#1 \mid #2\bigr)}
\def\Bip(#1|#2){\Bigl( #1 \bigm| #2 \Bigr)}
\def\labelenumi{\textnormal{(\@alph\c@enumi)}}
\def\theenumi{\@alph \c@enumi}
\def\labelenumii{\textnormal{(\@roman\c@enumii)}}
\def\theenumii{\@roman \c@enumii}
\def\alphapart#1{\charno=96
\advance\charno by#1\char\charno}
\newcommand\go{G\z}
\newcommand\Ind{\operatorname{Ind}}
\newcommand\Prim{\operatorname{Prim}}
\newcommand\supp{\operatorname{supp}}
\newcommand\Iso{\operatorname{Iso}}
\newcommand\cs{\ensuremath{\Cst}}  
\newcommand\Cst{C^{*}}
\newcommand\red{r}
\newcommand\z{^{(0)}}
\newcommand\inv{^{-1}}
\newcommand\comp{^{(2)}}
\newcommand\dd{d}
\newcommand\gmiso{G/\Iso(G)}
\newcommand\alex{\operatorname{\mathbb{A}}}
\newcommand\cG{\alex(G)}
\newcommand\I{\mathcal{I}}
\newcommand\Ig{\I_{G}}
\renewcommand\O{\mathcal{O}}
\newcommand\gugo{G\backslash\go}
\newcommand\gmgo{\gugo} 
\newcommand{\NN}{\mathbf{N}}
\newcommand{\ZZ}{\mathbf{Z}}
\newcommand{\TT}{\mathbf{T}}
\newcommand\god[1]{\go_{#1}}
\newcommand\godp[1]{\go_{#1+}}
\newcommand\godm[1]{\go_{#1-}}
\newcommand\card[1]{\bigl| #1 \bigr|}
\newcommand{\sign}{{\mathrm{sign}}}
\DeclareMathOperator{\range}{range}
\newcommand{\plusone}{^{+1}}
\newcommand{\nuc}{{\mathrm{nuc}}}
\newcommand{\Ff}{\mathcal{F}}
\DeclareMathOperator{\dad}{\textsc{dad}}
\newcommand{\lsp}{\operatorname{span}}
\newcommand\Indx[1]{\Ind_{G(#1)}^{G}}
\newcommand\lambdax[1]{\lambda_{G(#1)}}
\newcommand\so{\Sigma_{0}}
\begin{document}
\begin{abstract} We characterise when the $\Cst$-algebra $\Cst(G)$ of
  a locally compact and Hausdorff groupoid $G$ is subhomogeneous, that
  is, when its irreducible representations have bounded finite
  dimension; if so we establish a bound for its nuclear dimension in
  terms of the topological dimensions of the unit space of the
  groupoid and the spectra of the primitive ideal spaces of the
  isotropy subgroups.  For an \'etale groupoid $G$, we also establish
  a bound on the nuclear dimension of its $\Cst$-algebra provided the
  quotient of $G$ by its isotropy subgroupid has finite dynamic
  asymptotic dimension in the sense of Guentner, Willet and Yu.  Our
  results generalise those of C.~B\"oncicke and K.~Li to groupoids
  with large isotropy, including graph groupoids of directed
  graphs. We find that all graph $\Cst$-algebras that are stably
  finite have nuclear dimension at most $1$.  We also show that the
  nuclear dimension of the $\Cst$-algebra of a twist over $G$ has the
  same bound on the nuclear dimension as for $\Cst(G)$ and the twisted
  groupoid $\Cst$-algebra.
\end{abstract}

\title[Nuclear dimension of  groupoid
  $\Cst$-algebras]{\boldmath Nuclear dimension of  groupoid
  $\Cst$-algebras with large abelian isotropy, with applications to
  $\Cst$-algebras of directed graphs and twists} 

\author[an Huef]{Astrid an Huef}
\address{School of Mathematics and Statistics \\ Victoria University
  of Wellington \\ P.O.\@ Box 600 \\ Wellington 6140 NEW ZEALAND}
\email{astrid.anhuef@vuw.ac.nz}

\author[Williams]{Dana P. Williams}
\address{Department of Mathematics\\ Dartmouth College \\ Hanover, NH
  03755-3551 USA}
\email{dana.williams@dartmouth.edu}

\subjclass[2010]{46L05, 
22A22
}

\keywords{Dynamic asymptotic dimension, nuclear dimension, groupoid,
  groupoid $\Cst$-algebra, subhomogeneous $\Cst$-algebra, directed
  graph, twisted groupoid $\Cst$-algebra}

\thanks{This research was supported by Marsden grant 21-VUW-156 of the
  Royal Society of New Zealand and the Shapiro Fund of Dartmouth
  College. We thank Gwion Evans, Alex Kumjian, Sergey Neshveyev, Aidan Sims and 
  Danie van Wyk for helpful discussions. }  \date{19 February 2025, with revisions 24 August 2025}

\maketitle

\section{Introduction}
\label{sec:introduction}
Nuclear dimension of $\Cst$-algebras, defined by Winter and Zacharias
in \cite{WinterZacharias:nuclear}, is a non-commutative generalisation
of topological covering dimension.  For example, the nuclear dimension
of a $\Cst$-algebra with continuous trace is the topological dimension
of its spectrum. Furthermore, the nuclear dimension of a
subhomogeneous $\Cst$-algebra is the maximum of the topological
dimensions of the spectra of its maximal subquotients with continuous
trace. Nuclear dimension of $\Cst$-algebras and subhomogeneous
$\Cst$-algebras are important ingredients in the classification
programme for $\Cst$-algebras.  The current state-of-the-art of the
programme is \cite[Corollary~D]{Tikuisis-White-Winter:QD}: every
unital, separable, simple, infinite-dimensional $\Cst$-algebra with
finite nuclear dimension that satisfies the hypotheses of a certain
Universal Coefficient Theorem (UCT) is classifiable by its Elliott
invariant. Moreover, if the $\Cst$-algebra is stably finite, then it
is an inductive limit of subhomogeneous $\Cst$-algebras (see, for
example, \cite[\S1]{Li:Classif-Cartan}).  \medskip

Deciding if the nuclear dimension of a particular (possibly non-simple
or non-unital) $\Cst$-algebra is finite, and computing or bounding its
nuclear dimension, are all very challenging problems \cite{W+20,
  BGSW22, EGM19, EM18, Bonicke-Li-Nuclear-dim, Faurot-Schafhauser,
  RuizSimsTomforde:nucleardim}.  For example, it is currently not
known if simple $\Cst$-algebras of $2$-graphs, generally viewed as a
tractable class, have finite nuclear dimension. In
\cite[\S6]{EvansSims:AF}, Evans and Sims present two $2$-graphs
$\Lambda_I$ and $\Lambda_{II}$ whose $\Cst$-algebras are simple, and,
respectively, known to be approximately finite-dimensional (AF) and
AF-embeddable, and have the same Elliott invariant.  Since
$\Cst$-algebras of $k$-graphs satisfy the UCT, if $\Cst(\Lambda_I)$
and $\Cst(\Lambda_{II})$ have finite nuclear dimensions, then they are
isomorphic.

\medskip Even for the $\Cst$-algebra of a directed graph $E$ we
currently only have bounds on their nuclear dimension in special
cases.  A simple graph $\Cst$-algebra is either AF or purely infinite
by \cite{KumjianPaskRaeburn-graphs}, and hence has nuclear dimension
$0$ by \cite{WinterZacharias:nuclear} or $1$ by
\cite{RuizSimsSorensen}.  Faurot and Schafhauser observed in
\cite{Faurot-Schafhauser} that a $\Cst$-algebra of a finite graph has
finite nuclear dimension; by reducing to finite graphs, they show that
the $\Cst$-algebra of a graph satisfying condition (K), where every
return path has an entrance, has nuclear dimension at most $2$.  See
also \cite{Evington-Ng-Sims-White} for the nuclear dimension of a
$\Cst$-algebra of a finite graph, and more generally, the nuclear
dimension of an extension.  In \cite{RuizSimsTomforde:nucleardim},
Ruiz, Sims and Tomforde consider $\Cst(E)$ with a purely infinite
ideal $I$ with only finitely many ideals such that $\Cst(E)/I$ is
approximately finite dimensional, and show that it has nuclear
dimension at most $2$.

\medskip

As an application of our results about $\Cst$-algebras of groupoids
with large isotropy subgroups, we consider the $\Cst$-algebra of a
directed graph where no return path has an entrance. By
\cite{Schafhauser-AFE}, this is precisely the class of graph
$\Cst$-algebras that are stably finite (equivalently, AF-embeddable),
and we conclude that they all have nuclear dimension at most $1$.  Our
results are distinct from those in \cite{Faurot-Schafhauser,
  RuizSimsTomforde:nucleardim, Evington-Ng-Sims-White} and provide
significant evidence towards a positive answer to
\cite[Question~C]{Evington-Ng-Sims-White} which asks if all graph
$\Cst$-algebras have nuclear dimension at most $1$.  Our techniques
use a quotient of the graph groupoid $G_E$ of $E$ by its isotropy
subgroupoid, and this quotient groupoid is tractable because the
isotropy subgroupoid is open when no return path in $E$ has an entry.
In general, the isotropy subgroupoid is not open, and this restricts
the class of graphs to which our results can apply (see  \Cref{can't
   deal}).
\medskip

In \cite{GWY:2017:DAD}, Guentner, Willet and Yu developed a notion of
dynamic asymptotic dimension for actions of discrete groups on spaces,
and, more generally, for \'etale groupoids.  Roughly speaking, a
groupoid $G$ has dynamic asymptotic dimension at most $d\in\NN$, if
for every open, precompact and symmetric subset $K$ of $G$ there
exists a partition $U_0, U_1, \dots, U_d$ of the range of $K$ in the
unit space such that the subgroupoids $H_i$ generated by elements of
$K$ with range and source in one of the $U_i$, are open and precompact
in $G$. Consequently, each $\Cst(H_i)$ is a subhomgeneous
$\Cst$-algebra that can be viewed as a $\Cst$-subalgebra of $\Cst(G)$.
Theorem~8.6 of \cite{GWY:2017:DAD} states that if $G$ is a principal,
\'etale groupoid with dynamic asymptotic dimension $d$, then the
nuclear dimension of $\Cst(G)$ is bounded by a number depending on $d$
and the topological dimension of unit space of $G$. The same bound was
subsequently found for twisted groupoid $\Cst$-algebras $\Cst(E;G)$
for twists $E$ over an \'etale groupoid $G$, first when $G$ is
principal \cite[Theorem~4.1]{CDaHGV} and then for non-principal $G$
\cite[Theorem~3.2]{Bonicke-Li-Nuclear-dim}.  However, if $G$ has
finite dynamic asymptotic dimension, then the isotropy subgroups of
$G$ must be locally finite, that is, their finitely generated
subgroups must be finite.

\medskip

For a directed graph $E$, the dynamic asymptotic dimension of the
graph groupoid $G_E$ is either $0$ or $\infty$, and hence is a poor
predictor of the nuclear dimension of $\Cst(E)$. The problem is that
the isotropy subgroups are either trivial or isomorphic to $\ZZ$.
Nevertheless, \cite[Corollary~5.5]{CDaHGV}, based on results from
\cite{Muhly-Williams-Renault:tr3, Clark-anHuef-RepTh}, already showed
that when the orbit space of a groupoid is $T_1$, and the isotropy
subgroups are abelian and vary continuously, then looking at the
quotient groupoid of $G$ by its isotropy subgroupoid yields good
results.

\medskip 

We have developed much of our theory for possible non-\'etale
locally compact, Hausdorff groupoids.  In particular, in
\S\ref{sec:quasi-orbit-map} we show that the quasi-orbit map
associated to a groupoid $G$ is continuous, extending
\cite[Proposition~2.9]{Bonicke-Li-Nuclear-dim} from \'etale to general
locally compact, Hausdorff groupoids. We also show that if $G$ is amenable,  then the quasi-orbit map is
open.  Furthermore, in \S\ref{sec:subh-cs-algebr} we characterise when
$\Cst(G)$ is subhomogeneous, and if so, find a bound on its nuclear
dimension, again extending results for \'etale groupoids from
\cite{Bonicke-Li-Nuclear-dim}.  We also observe that if $\Cst(G)$ is
subhomogeneous, then $G$ is amenable. These results do not require the
isotropy subgroups to vary continuously.  

\medskip

\Cref{main thm} establishes a bound on the nuclear dimension of
$\Cst(G)$ for an \'etale groupoid $G$ with continuously varying
isotropy subgroups that are subhomogeneous.  The bound depends on the
topological dimensions of the unit space and the spectra of the
isotropy subgroups of $G$, and the dynamic asymptotic dimension of the
quotient groupoid of $G$ by its isotropy subgroupoid. To even ask that
the quotient groupoid has finite dynamic asymptotic dimension we need
its topology to be locally compact.  Theorem~\ref{main thm} yields our
application to graph algebras in \S\ref{sec directed graphs}, already
discussed above, which is based on a graph-theoretic construction that
may be of independent interest. Given a directed graph $E$ in which no
return path has an entrance, we construct a graph $F$ with no return
paths such that there is an isomorphism of topological groupoids from
the quotient of the graph groupoid $G_E$ by its isotropy subgroups
onto an open subgroupoid of $G_F$.  Since the dynamic asymptototic
dimension of $G_F$ is $0$, this shows that the quotient of $G_E$ has
dynamic asymptotic dimension $0$, and \Cref{main thm} applies, whence  the nuclear dimension of $\Cst(E)$ is at most $1$.
\medskip

We then consider twists and twisted groupoid $\Cst$-algebras.
\Cref{thm-twist-and-recover} establishes a bound on the nuclear
dimension of the $\Cst$-algebra $\Cst(\Sigma)$ of a twist $\Sigma$
over an \'etale groupoid $G$.  Since $\Sigma$ is an extension of $G$
by a trivial circle bundle, it is never \'etale. Nevertheless, we were
able to stretch our techniques to show directly that the nuclear
dimension of $\Cst(\Sigma)$ is bounded by the topological dimension of
the unit space and the dynamic asymptotic dimension of $G$.  Since the
twisted groupoid $\Cst$-algebra is a direct summand of $\Cst(\Sigma)$
by \cite{Brown-anHuef, IKRSW} we obtain the same bound on its
nuclear dimension as a corollary, recovering the main theorem
\cite[Theorem~3.2]{Bonicke-Li-Nuclear-dim}. Of course we could have
obtained \Cref{thm-twist-and-recover} by applying
\cite[Theorem~3.2]{Bonicke-Li-Nuclear-dim}, but we wanted to test our
techniques in the non-\'etale setting as well as advocate for the
philosophy that to gain information about twisted groupoid
$\Cst$-algebras looking at the $\Cst$-algebra of the twist can be
invaluable. Here, for example, we can deduce the bound on the nuclear
dimension of twisted groupoid $\Cst$-algebras without ever analysing
their subhomogeneous subalgebras nor twisted group $\Cst$-algebras.

\section{Preliminaries}
\label{sec:preliminaries}

Throughout, $G$ is a second countable, locally compact and
Hausdorff groupoid with unit space $G\z$ equipped with a left Haar
system $\set{\lambda^{u}}_{u\in\go}$.  Our primary reference for
groupoids and their \cs-algebras is \cite{Williams:groupoid}.  Since
$G$ has a Haar system, the range and source maps $r,s\colon G\to G\z$,
given by $r(\gamma)=\gamma\gamma\inv$ and
$s(\gamma)=\gamma\inv \gamma$, are open as well as continuous. The set
of composable pairs $\set{(\beta,\gamma) : s(\beta)=r(\gamma)}$ is
denoted by $G\comp$.  For subsets $A$ and $B$ of $G$, we set
$AB\coloneqq\set{\alpha\beta:\text{$\alpha\in A$, $\beta\in B$, and
    $(\alpha,\beta)\in G\comp$}}$.  In particular, if $u\in\go$, then
we write $G_{u}=Gu=\set{\gamma\in G:s(\gamma)=u}$, and similarly for
$G^{u}=uG$.  The \emph{isotropy subgroup} at $u$ is
$G(u)=G^{u}\cap G_{u}$, and
\[\Iso(G)\coloneqq\set{\gamma\in G :r(\gamma)=s(\gamma)}\] is the
\emph{isotropy subgroupoid} of $G$.  Although $\Iso(G)$ is always a
closed subgroupoid of $G$, it will have a Haar system if and only if
the restriction of $r$, or $s$, to $\Iso(G)$ remains open---see
\cite[Theorem~6.12]{Williams:groupoid}.  Note that $G$ acts on the
left of $\go$ via $\gamma\cdot s(\gamma)=r(\gamma)$
\cite[Example~2.6]{Williams:groupoid}, and we write $[u]$ for the
\emph{orbit} $G\cdot u$ of $u$ so that $[u] = r(s^{-1}(\{u\}))$.

We will often require our groupoid $G$ to have the additional property
that the range and source maps are local homeomorphisms; such a
groupoid is called \emph{\'etale}, and we can equip it with a Haar
system consisting of counting measure on each fibre $G^{u}$. In
particular, in this paper ``\'etale'' includes second countable,
locally compact, Hausdorff, with Haar system of counting measures.

When working with open maps it is often useful to observe that we can
lift convergent sequences in the range.  We will use the following
lemma which allows a sharpening of Fell's Criterion
\cite[Proposition~1.15]{Williams:Crossed} that avoids passing to
subsequences in the separable case.  We learned this trick from
\cite[Proposition~2.4]{siw:gta71}.

\begin{lemma}[Sequence Lifting] \label{lem-seq-lift} Suppose that $X$
  and $Y$ are topological spaces with $X$ first countable.  Let
  $f\colon X\to Y$ be an open surjection.  Let $\{y_{n}\}\subset Y$ be
  a \emph{sequence} converging to $f(x)$.  Then there is a sequence
  $\{x_n\}\subset X$ such that $x_{n}\to x$ and such that
  $f(x_{n})=y_{n}$.
\end{lemma}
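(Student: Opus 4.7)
The plan is to exploit first countability at $x$ together with the openness of $f$ to build the lift explicitly, without having to thin out to a subsequence. First I would fix a decreasing countable neighborhood base $\{V_k\}_{k\in\NN}$ at $x$, so that every neighborhood of $x$ contains some $V_k$ and $V_{k+1}\subset V_k$. Because $f$ is open, each $f(V_k)$ is an open neighborhood of $f(x)$, and because $y_n\to f(x)$, there is for each $k$ an index $N_k$ with $y_n\in f(V_k)$ for all $n\ge N_k$; the nesting of the $V_k$ lets me arrange $N_1\le N_2\le\cdots$.

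Next I would construct the sequence by a diagonal trick. For each $n$ set $k(n)=\max\set{k\le n:N_k\le n}$, with the convention that $k(n)=0$ if this set is empty. For those $n$ with $k(n)=0$ I pick any $x_n\in f\inv(\set{y_n})$, which is nonempty by surjectivity of $f$. For $n$ with $k(n)\ge 1$, the point $y_n$ lies in $f(V_{k(n)})$, so I can choose $x_n\in V_{k(n)}$ with $f(x_n)=y_n$. By construction $f(x_n)=y_n$ for every $n$.

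Finally I would verify $x_n\to x$: given any open neighborhood $U$ of $x$, first countability provides a $K$ with $V_K\subset U$; for any $n\ge\max(K,N_K)$ one checks that $K$ lies in the set defining $k(n)$, so $k(n)\ge K$, and hence $x_n\in V_{k(n)}\subset V_K\subset U$. The main point one has to be careful about is arranging $k(n)\to\infty$ even if the $N_k$ themselves happen to be bounded, and this is precisely what the cap $k\le n$ in the definition of $k(n)$ accomplishes. Otherwise there is no serious obstacle: the content of the lemma is just that first countability of $X$ lets a diagonal argument replace the subsequence extraction used in Fell's criterion.
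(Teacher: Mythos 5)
Your proof is correct and follows essentially the same route as the paper's: a nested countable neighborhood base at $x$, openness of $f$ to produce tail indices past which each $f(V_k)$ captures $y_n$, arbitrary choices in the fibres before the first threshold, and lifts chosen in ever smaller $V_k$ thereafter. The only difference is bookkeeping --- your cap $k\le n$ in the definition of $k(n)$ plays the role of the paper's strictly increasing thresholds $m(i)$ --- so the two arguments are the same in substance.
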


\begin{proof}
  Let $\set{U_{n}:n\ge1}$ be a countable neighborhood base at $x$ such
  that $U_{n+1} \subset U_{n}$ for all $n\ge1$.  Also let
  $F_{n}=f^{-1}(y_{n})$ for $n\ge 1$.  Since $f$ is open, $f(U_{n})$
  is a neighborhood of $f(x)$, and there exists $m(i)$ such that
  $n\ge m(i)$ implies that $F_{n}$ meets $U_{i}$.  We can arrange that
  $m(i)<m(i+1)$ for all $i$.  Let $x_{n}\in F_{n}$ be arbitrary if
  $n<m(1)$.  Since $F_{n}\cap U_{i}\not=\emptyset$ if $n\ge m(i)$, we
  can choose $x_{n}\in F_{n}\cap U_{i}$ if $m(i)\le n <m(i+1)$.  If
  $U$ is any neighborhood of $x$, then there is an $i_{0}$ such that
  $x\in U_{i_{0}} \subset U$.  Then if $n\ge m(i_{0})$, we have
  $x_{n}\in U$.  Therefore $x_{n}\to x$ in $U$.
\end{proof}

We equip the space $\mathcal{C}(G)$ of closed subsets of $G$ with the
Fell topology from \cite{Fell-1962}. Then the \emph{isotropy subgroups
  of $G$ vary continuously} if the function $u\mapsto G(u)$ from $G\z$
to the subspace of $\mathcal{C}(G)$ of closed subgroups of $G$ is
continuous \cite[\S H.4]{Williams:groupoid}.

Since $\Iso(G)$ is a closed subgroupoid of $G$, $G$ is a free and
proper right $\Iso(G)$-space.  Furthermore, the orbit space $\gmiso$
is a principal groupoid such that the quotient map
$\rho\colon G\to \gmiso$ is a homomorphism (see, for example,
\cite[Lemma~2.2]{IKRSW}).  Since $\Iso(G)$ may not have open range and
source maps, the quotient topology on $\gmiso$ may not even be locally
compact nor Hausdorff.  Furthermore, unless $\rho$ is open, it is not
clear that $G/\Iso(G)$ is a topological groupoid.

\begin{lemma}\label{lem hypotheses for lc quotient}
  Let $G$ be an \'etale groupoid and let $\rho\colon G\to G/\Iso(G)$
  be the quotient map.  Then the following are equivalent:
  \begin{enumerate}
  \item\label{lem hypotheses for lc quotient 1} $\Iso(G)$ is open in
    $G$;
  \item\label{lem hypotheses for lc quotient 2} $u\mapsto G(u)$ is
    continuous;
  \item\label{lem hypotheses for lc quotient 3}
    $\bigl(G/\Iso(G)\bigr)\z$ is open in $G/\Iso(G)$;
  \item\label{lem hypotheses for lc quotient 4} $\rho$ is open.
  \end{enumerate} If the equivalent conditions \cref{lem hypotheses
    for lc quotient 1}--\cref{lem hypotheses for lc quotient 4} hold,
  then $\gmiso$ is an \'etale groupoid.
\end{lemma}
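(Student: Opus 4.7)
The plan is to prove the equivalences in two blocks: the cycle (a) $\Rightarrow$ (d) $\Rightarrow$ (c) $\Rightarrow$ (a), which uses only the algebraic and topological structure of $\rho$, and then (a) $\Leftrightarrow$ (b), which requires working with the Fell topology. Finally I will upgrade $\gmiso$ to an étale groupoid using what the four conditions give. I expect the careful Fell-topology argument for (b) $\Rightarrow$ (a) and the verification of Hausdorffness of the quotient to be the most delicate parts.

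For the cycle, the key algebraic fact is
\[
  \rho\inv\bigl((\gmiso)\z\bigr)=\Iso(G),
\]
verified by observing that if $\rho(\gamma)$ is a unit then $\gamma=u\eta$ for some $u\in G\z$ and $\eta\in\Iso(G)$ with $r(\eta)=u$, forcing $\gamma=\eta\in\Iso(G)$. Given this identity, (c) $\Rightarrow$ (a) is immediate from continuity of $\rho$, and (d) $\Rightarrow$ (c) follows because $G\z$ is open in the étale groupoid $G$, so $\rho(G\z)=(\gmiso)\z$ is open when $\rho$ is open. For (a) $\Rightarrow$ (d), take an open $U\subseteq G$; I would show $\rho\inv(\rho(U))=U\cdot\Iso(G)$ is open by choosing, for each $\alpha\eta\in U\cdot\Iso(G)$, open bisections $V_\alpha\subseteq U$ around $\alpha$ and $V_\eta\subseteq\Iso(G)$ around $\eta$ (using the openness of $\Iso(G)$); their product $V_\alpha V_\eta$ is then a standard open neighborhood of $\alpha\eta$ inside $U\cdot\Iso(G)$.

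For (a) $\Rightarrow$ (b) I use second countability to verify Fell convergence $G(u_n)\to G(u)$ sequentially. Given $\gamma\in G(u)$, I take an open bisection $V\subseteq\Iso(G)$ containing $\gamma$ and set $\gamma_n\coloneqq(s|_V)\inv(u_n)\in V\subseteq\Iso(G)$ for $n$ large; then $\gamma_n\in G(u_n)$ and $\gamma_n\to\gamma$. The closedness half of Fell convergence is immediate from continuity of $r$ and $s$. For (b) $\Rightarrow$ (a), take $\gamma_0\in\Iso(G)$, set $u_0=r(\gamma_0)$, and pick an open bisection $V\ni\gamma_0$. Arguing by contradiction, if no neighborhood of $u_0$ is mapped into $\Iso(G)$ by $(s|_V)\inv$, there exist $v_n\to u_0$ with $\alpha_n\coloneqq(s|_V)\inv(v_n)\notin\Iso(G)$. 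By (b) there exist $\gamma_n\in G(v_n)$ with $\gamma_n\to\gamma_0$, so eventually $\gamma_n\in V$; since $s|_V$ is injective and $s(\gamma_n)=v_n=s(\alpha_n)$, we must have $\gamma_n=\alpha_n$, giving $\alpha_n\in\Iso(G)$, a contradiction.

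For the last statement, I would first show that $\rho$ is injective on any open bisection $V$: if $\rho(\alpha)=\rho(\beta)$ with $\alpha,\beta\in V$, then $\beta=\alpha\eta$ for some $\eta\in\Iso(G)$, so $r(\alpha)=r(\beta)$, whence $\alpha=\beta$ by the bisection property. Combined with openness and continuity of $\rho$, this makes $\rho|_V$ a homeomorphism onto an open set $\rho(V)\subseteq\gmiso$, providing bisection charts on which the range and source maps of $\gmiso$ descend to local homeomorphisms (via $\rho|_{G\z}$, which is a homeomorphism onto $(\gmiso)\z$). Second countability of $\gmiso$ follows from that of $G$ together with openness of $\rho$. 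Hausdorffness reduces to closedness of the orbit equivalence relation in $G\times G$: given $(\alpha_n,\beta_n)\to(\alpha,\beta)$ with $\beta_n=\alpha_n\eta_n$ and $\eta_n\in\Iso(G)$, continuity of multiplication and inversion gives $\eta_n=\alpha_n\inv\beta_n\to\alpha\inv\beta$, which lies in $\Iso(G)$ by closedness, so the limit remains in the relation; local compactness is then inherited from $G$ through the open quotient map.
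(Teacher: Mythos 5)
Your treatment of the four equivalences is correct and is actually more self-contained than the paper's proof, which cites \cite{CDaHGV} for \partref{1}$\Leftrightarrow$\partref{2} and Williams's book for \partref{2}$\Leftrightarrow$\partref{4}: your identity $\rho\inv\bigl((\gmiso)\z\bigr)=\Iso(G)$, the product-of-bisections argument that $U\cdot\Iso(G)$ is open, and the two Fell-criteria arguments (valid sequentially since everything here is second countable and metrisable) all check out, as do your direct verifications that the orbit relation is closed in $G\times G$ (hence the open quotient is Hausdorff) and that local compactness and second countability pass to $\gmiso$, which replace the paper's appeal to a general proposition on quotients.

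The gap is in the final statement: ``$\gmiso$ is an \'etale groupoid'' requires $\gmiso$ to be a \emph{topological} groupoid, i.e.\ that multiplication and inversion on $\gmiso$ are continuous, and your proposal never addresses this. Exhibiting charts $\rho|_{V}$ on which $r$ and $s$ descend to local homeomorphisms does not by itself give continuity of the operations; this is precisely the point the paper's proof singles out (``it still remains to check that $\gmiso$ is a topological groupoid'') and settles with the Sequence Lifting lemma (\Cref{lem-seq-lift}), lifting a convergent composable pair through the open map $\rho$. The omission is fixable with tools you already have: inversion descends because $\rho\circ({}\cdot{})\inv$ is continuous and the open map $\rho$ is a quotient map; for multiplication, note that injectivity of $\rho$ on units gives $G\comp=(\rho\times\rho)\inv\bigl((\gmiso)\comp\bigr)$, so given a composable pair $\bigl(\rho(\alpha),\rho(\beta)\bigr)$ you may choose representatives with $s(\alpha)=r(\beta)$, pick open bisections $V_\alpha\ni\alpha$ and $V_\beta\ni\beta$, and observe that on the neighbourhood $\bigl(\rho(V_\alpha)\times\rho(V_\beta)\bigr)\cap(\gmiso)\comp$ the product is $\rho\circ m_G\circ\bigl((\rho|_{V_\alpha})\inv\times(\rho|_{V_\beta})\inv\bigr)$, a composition of continuous maps. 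Either this local-chart argument or the paper's sequence-lifting argument should be included to complete the proof.
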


\begin{proof}
  The equivalence of (\ref{lem hypotheses for lc quotient 1}) and
  (\ref{lem hypotheses for lc quotient 2}) is
  \cite[Lemma~5.1]{CDaHGV}.  The equivalence of (\ref{lem hypotheses
    for lc quotient 1}) and (\ref{lem hypotheses for lc quotient 3})
  follows because
  $\rho^{-1}\bigl( \bigl( G/\Iso(G)\bigr)\z\bigr)=\Iso(G)$ is open by
  the definition of the quotient topology.  That (\ref{lem hypotheses
    for lc quotient 2})$\Longleftrightarrow$(\ref{lem hypotheses for
    lc quotient 4}) holds follows from
  \cite[Theorem~6.12]{Williams:groupoid} and
  \cite[Ex~6.3.2(b)]{Williams:groupoid} (and does not require $G$ to
  be \'etale).

  If (\ref{lem hypotheses for lc quotient 1})--(\ref{lem hypotheses
    for lc quotient 4}) hold, then $\gmiso$ is locally compact
  Hausdorff by \cite[Proposition~2.18]{Williams:groupoid}---we need
  (\ref{lem hypotheses for lc quotient 2}) and
  \cite[Theorem~6.12]{Williams:groupoid} to see that $\Iso(G)$ has a
  Haar system in order to apply
  \cite[Proposition~2.18]{Williams:groupoid}.

  It still remains to check that $\gmiso$ is a topological
  groupoid---that is, that groupoid operations are continuous.  But
  this follows since $\rho$ is open using Lemma~\ref{lem-seq-lift}.
  For example, if
  $\bigl( \rho(\gamma_{n}),\rho(\eta_{n}) \bigr)\to
  \bigl(\rho(\gamma),\rho(\eta)\bigr)$ in $(\gmiso)\comp$, then we can
  use Lemma~\ref{lem-seq-lift} to assume that $\gamma_{n}\to \gamma$
  and $\eta_{n}\to \eta$.  Then $\gamma_{n}\eta_{n}\to \gamma\eta$ and
  $\rho(\gamma_{n})\rho(\eta_{n})\to \rho(\gamma)\rho(\eta)$ since
  $\rho$ is a continuous homomorphism.
\end{proof}

Examples where $G$ is \'etale but the quotient groupoid $G/\Iso(G)$ is
not \'etale abound:

\begin{example}\label{can't deal}
  Let $E$ be a row-finite directed graph that is cofinal and
  aperiodic. Let $G_E$ be the associated graph groupoid (see \Cref{sec
    directed graphs}), which is \'etale.  Suppose that $E$ has a
  periodic infinite path $x$. By aperiodicity there exists a sequence
  $\{x_n\}$ of aperiodic inifnite paths such that $x_n\to x$. Then
  $\{\{x_n\}\}=\{G_E (x_n)\}$ cannot converge to $G_E( x)\neq\{x\}$.
  Since the isotropy subgroups do not vary continuously, by \Cref{lem
    hypotheses for lc quotient} the unit space of the quotient
  groupoid $G_E/\Iso(G_E)$ is not open, and hence $G_E/\Iso(G_E)$ is
  not \'etale.  So our techniques do not apply in this situation.  It
  is not even clear that $G_E/\Iso(G_E)$ is a locally compact
  groupoid. Similarly, our techniques do not apply to the
  $\Cst$-algebras of the $2$-graphs $\Lambda_I$ and $\Lambda_{II}$
  mentioned in the introduction.
\end{example}

Let $G$ be a locally compact, Hausdorff and \'etale groupoid with
non-compact unit space $G\z $, and let $G\z \cup \{\infty\}$ be the
one-point compactification of $G\z $. Then
\[\cG\coloneqq G\cup \{\infty\}\] is a locally compact, Hausdorff,
\'etale groupoid, with compact unit space $G\z\cup\{\infty\}$, with
the following structure:
$\cG\comp \coloneqq G\comp \cup\{(\infty, \infty)\}$, multiplication,
inversion, and $r$ and $s$ are extended from $G$ to $\cG$ by setting
$r\inv(\infty)=s\inv(\infty)=\{\infty\}$; the set consisting of $\cG$,
all open sets in $G\z\cup\{\infty\}$, and all open sets in $G$ is a
basis for a topology on $\cG$.  Then $\cG$ is called the
\emph{Alexandrov groupoid} and is studied in detail in
\cite[\S3]{CDaHGV}.  It is straightforward to verify, using the two
criteria of \cite[Lemma~H.2]{Williams:groupoid}, that if the isotropy
subgroups vary continuously on $G\z$, then they also vary continuously
on $\cG\z$.

In Section~\ref{sec:nuclear-dimension-cs}, we will need to consider
the Aleaxandrov groupoid of the quotient groupoid $\gmiso$.  The
following will be useful.

\begin{lemma}\label{dad quotient}
  Let $G$ be an \'etale groupoid and let $\rho\colon G\to G/\Iso(G)$
  be the quotient map. Suppose that the isotropy subgroups vary
  continuously.  Write
  \[\cG=G\cup\{\infty\}\quad \text{and}\quad \alex(G/\Iso(G))
    =G/\Iso(G)\cup\{{\infty'}\}
  \]
  for the Alexandrov groupoids of $G$ and $G/\Iso(G)$,
  respectively. Define $\bar\rho\colon \cG\to \alex(G/\Iso(G))$ by
  $\bar{\rho}(\infty)={\infty'}$ and $\bar{\rho}(\gamma)=\rho(\gamma)$
  for $\gamma\in G$. Then $\bar\rho$ is a continuous, open, surjective
  homomorphism that factors through an isomorphism of $\cG/\Iso(\cG)$
  onto $\alex(G/\Iso(G))$.
\end{lemma}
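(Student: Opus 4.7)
The plan is to verify each claim in turn, with the key preliminary observation being that $\rho|_{\go}\colon \go \to \bigl(\gmiso\bigr)\z$ is a homeomorphism. Injectivity follows because if $u,u'\in\go$ and $u'=u\eta$ for some $\eta\in\Iso(G)$ with $r(\eta)=u$, then left multiplication by the unit $u$ forces $u'=\eta$; but then $\eta$ is itself a unit with $r(\eta)=\eta$, so $u=u'$. Surjectivity is immediate, and openness follows from \Cref{lem hypotheses for lc quotient}: $\go$ is open in the \'etale groupoid $G$, $\rho$ is open, and $\rho(\go)\subset\bigl(\gmiso\bigr)\z$. Consequently $\rho|_{\go}$ bijectively sends compact sets to compact sets, and this is what allows matching neighborhoods of $\infty$ with neighborhoods of $\infty'$. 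Surjectivity and the homomorphism property of $\bar\rho$ are then immediate, since $\bar\rho$ restricts to the surjective homomorphism $\rho$ on $G$ and sends the only new composable pair $(\infty,\infty)$ to $\infty'\cdot\infty'=\infty'$.

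For continuity, only the point $\infty$ needs attention. A basic open neighborhood of $\infty'$ in $\alex(\gmiso)$ has the form $V=\{\infty'\}\cup\bigl((\gmiso)\z\setminus K'\bigr)$ for a compact $K'\subset(\gmiso)\z$. Using that $\rho$ is a homomorphism and that $\rho|_{\go}$ is injective, I first check $\rho^{-1}\bigl((\gmiso)\z\bigr)=\Iso(G)$, whence
\[\bar\rho^{-1}(V)=\{\infty\}\cup\bigl(\Iso(G)\setminus\rho^{-1}(K')\bigr).\]
Since $\Iso(G)$ is open in $G$ by \Cref{lem hypotheses for lc quotient} and $\rho^{-1}(K')$ is closed, the restriction of $\bar\rho^{-1}(V)$ to $G$ is open; and $\bar\rho^{-1}(V)$ contains the basic neighborhood $\{\infty\}\cup\bigl(\go\setminus(\rho|_{\go})^{-1}(K')\bigr)$ of $\infty$, because $(\rho|_{\go})^{-1}(K')$ is compact in $\go$. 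Hence $\bar\rho^{-1}(V)$ is open in $\cG$.

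For openness of $\bar\rho$, the restriction to open subsets of $G$ reduces to the openness of $\rho$. On a basic open neighborhood $\{\infty\}\cup(\go\setminus K)$ of $\infty$, the image under $\bar\rho$ is $\{\infty'\}\cup\bigl((\gmiso)\z\setminus\rho(K)\bigr)$, which is a basic open neighborhood of $\infty'$ since $\rho(K)$ is compact in $(\gmiso)\z$. To descend $\bar\rho$ to an isomorphism, I would first compute $\Iso(\cG)=\Iso(G)\cup\{\infty\}$ and note that the $\Iso(\cG)$-orbit of $\infty$ is just $\{\infty\}$, so $\bar\rho$ is constant on $\Iso(\cG)$-orbits and descends to a homomorphism $\tilde\rho\colon\cG/\Iso(\cG)\to\alex(\gmiso)$. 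This $\tilde\rho$ is a bijection because the $\rho$-induced bijection $G/\Iso(G)\cong\gmiso$ accounts for the $G$-part while $[\infty]$ maps uniquely to $\infty'$. Continuity and openness of $\tilde\rho$ then follow from the corresponding properties of $\bar\rho$ via the universal property of the quotient topology on $\cG/\Iso(\cG)$, giving the desired isomorphism of topological groupoids.

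The main bookkeeping hurdle is handling the two one-point compactification topologies at $\infty$ and $\infty'$ simultaneously; once the homeomorphism $\rho|_{\go}$ is in place and used to transfer compact sets back and forth, the rest of the argument is routine tracking of basic open sets and the universal property of the quotient.
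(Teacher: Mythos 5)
Your proposal is correct and follows essentially the same route as the paper: use that $\rho$ is open (via \Cref{lem hypotheses for lc quotient}) and restricts to a homeomorphism of unit spaces, check continuity and openness of $\bar\rho$ on the nontrivial basic neighbourhoods of $\infty$ and $\infty'$, and identify the fibres of $\bar\rho$ with the $\Iso(\cG)$-orbits to descend to the isomorphism. If anything, your explicit computation $\bar\rho^{-1}(V)=\{\infty\}\cup\bigl(\Iso(G)\setminus\rho^{-1}(K')\bigr)$ is a slightly more careful rendering of the continuity step than the paper's displayed formula.
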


\begin{proof} The quotient map $\rho$ is a continuous, surjective
  homomorphism. The isotropy subgroups vary continuously, and so
  $\rho$ is also open by \Cref{lem hypotheses for lc quotient}. In
  particular, the restriction of $\rho$ to the unit space is a
  homeomorphism of $G\z$ onto $(G/\Iso(G))\z$.

  Since $(\alpha,\beta)\in \alex(G)\comp$ if and only if
  $(\alpha,\beta)\in G\comp$ or $\alpha=\beta=\infty$, it follows that
  $\bar\rho$ is a surjective homomorphism.

  To see that $\bar\rho$ is continuous and open, it suffices to
  consider nontrivial basic open sets containing ${\infty'}$ and
  $\infty$, respectively.  To this end, let $K, L$ be compact subsets
  of $(G/\Iso(G))\z$ and $G\z$, respectively. Then since $\rho$
  restricted to the unit space is a homeomorphism,
  \[
    \bar{\rho}\inv \big((G/\Iso(G))\z\setminus
    K\cup\{{\infty'}\}\big)=\rho^{-1}\bigl( G\z\setminus
    \rho\inv(K)\cup\{\infty\} \bigr)
  \]
  is open in $\cG$ and
  \[
    \bar\rho\big(G\z\setminus
    L\cup\{\infty\}\big)=(G/\Iso(G))\z\setminus \rho(L)\cup\{{\infty'}\}
  \]
  is open in $\alex( G/\Iso(G))$.  Thus $\bar\rho$ is continuous and
  open.

  Next, let $\alpha, \beta\in \cG$; we claim  that  $\bar\rho(\alpha)=\bar\rho(\beta)$ if and only if  there exists $\gamma\in \Iso(\cG)$ such that   $\alpha=\beta\gamma$. First, suppose that $\bar\rho(\alpha)=\bar\rho(\beta)$. If $\alpha=\infty$, then $\beta=\infty$, and we take  $\gamma=\infty \in \Iso(\cG)$ to get $\alpha=\beta\gamma$. Second, suppose there exists $\gamma\in \Iso(\cG)$ such that $\alpha=\beta\gamma$. Since $\bar\rho$ is a homomorphism,  $\bar\rho(\gamma)$ is a unit, and  $\bar\rho(a)=\bar\rho(\beta)\bar\rho(\gamma)=\bar\rho(\beta)$.
 Thus $\bar\rho$ factors through an isomorphism of $\cG/\Iso(\cG)$ onto $\alex( G/\Iso(G))$ as claimed.
\end{proof}

The definitions of the full and reduced $\Cst$-algebras of $G$ are
given in detail in, for example, \cite[\S1.4]{Williams:groupoid}.  We
record some of the basics here for convenience.  We assume that our
groupoids $G$ are equipped with a left Haar system
$\lambda=\set{\lambda^u : u\in G\z}$, and that if $G$ is \'etale, then
$\lambda^{u}$ is counting measure on $G^{u}$.  Let $C_c(G)$ be the
vector space of continuous and compactly supported complex-valued
functions on $G$, equipped with convolution and involution given for
$\gamma\in G$ and $f,g\in C_c(G)$ by
\[
  f*g(\gamma)=\int_G f(\beta)g(\beta\inv
  \gamma)\,\dd\lambda^{r(\gamma)}(\beta)\quad\text{and}\quad
  f^*(\gamma)=\overline{f(\gamma\inv )} .
\]
A $*$-homomorphism $L\colon C_c(G)\to B(H_L)$ into the bounded
operators on a Hilbert space $H_L$ is a \emph{representation} if it is
$I$-norm bounded.  Then the \emph{full $\Cst$-algebra} $\Cst(G)$ of
$G$ is the completion of $C_c(G)$ in the norm
\[
  \|f\|_{\Cst(G)}=\sup\set{\|L(f)\| : \text{$L$ is a representation of
      $C_c(G)$}}.
\]

We define another measure $\lambda_u$ on $G$ with support in $G_{u}$
by $\lambda_u(V)=\lambda^u(V\inv )$.  For each $u\in G\z $, define
$L^u\colon C_c(G)\to B(L^2(G_{u},\lambda_u))$ for $\gamma\in G_{u}$ by
\begin{equation*}
  \big(L^u(f)\xi\big)(\gamma)=\int_G f(\beta)\xi(\beta{\inv}
  \gamma)\, \dd\lambda^{r(\gamma)} (\beta).
\end{equation*}
Then $L^u$ is a representation of $C_c(G)$ on $L^2(G_{u},\lambda_u)$
and hence extends to a representation
$L^u\colon \Cst(G)\to B(L^2(G_{u},\lambda_u))$.  The \emph{reduced
  $\Cst$-algebra} $\Cst_\red(G)$ of $G$ is the completion of $C_c(G)$
in the norm
\[\|f\|_{\Cst_{r}(G)} =\sup\set{\|L^u(f)\| : u\in G\z }.\]

We write $\Ind_{G(u)}^G(\pi)$ for the representation of $\Cst(G)$
induced from a representation $\pi$ of the isotropy subgroup $G(u)$
\cite[Definition~5.12]{Williams:groupoid}.  If the orbits are
  locally closed in $\go$ (equivalently, when $\gugo$ is a $T_{0}$
  topological space), then every irreducible representation of
  $\cs(G)$ is of the form $\Ind_{G(u)}^{G}(\pi)$ for an irreducible
  representation of $G(u)$ for some $u\in\go$---see
  \cite[Theorem~5.35]{Williams:groupoid}.  If $G$ is amenable, then it
  is still the case that every primitive ideal of $\cs(G)$ is induced
  from a primitive ideal of $\cs(G(u))$.

\section{The quasi-orbit map}
\label{sec:quasi-orbit-map}

When $H$ is a locally compact group acting by automorphisms of a
$\Cst$-algebra $A$, the space of quasi-orbits $\mathcal Q$ and the
quasi-orbit map $k\colon \Prim A \to \mathcal{Q}$ play an important
role in the analysis of the ideal structure of $A\rtimes_\alpha H$,
see, for example, \cite{green:am78, Williams:Crossed}. In particular,
this quasi-orbit map is continuous and open.

Here we consider a groupoid $G$ and prove that an analogous
  quasi-orbit map---defined below---on $\Prim\Cst(G)$ is continuous,  and if $G$ is amenable 
  then it is open.  (When $G$ is \'etale, the continuity of the quasi-orbit map is proved in
\cite[Proposition~2.9]{Bonicke-Li-Nuclear-dim}.)  We start by
collecting the background needed to define the quasi-orbit map.

We write $\I(A)$ for the lattice of closed, two-sided ideals in $A$.
We equip $\I(A)$ with the topology with subbasic open sets
\begin{equation*}
  \O_{J}=\set{I\in\I(A):I\not\supset J}
\end{equation*}
where $J$ varies over all of $\I(A)$.  The relative topology on the
primitive ideal space $\Prim A $ of $A$ is the usual Jacobson
topology.
    
Let $F$ be a closed subset of a locally compact, Hausforff space
$X$. We write $I_F$ for the ideal in $C_{0}(X)$ of functions vanishing
on $F$.  The following is a small improvement of a special case of
\cite[Lemma~8.38]{Williams:Crossed} for $A=C_0(X)$.  That is, as in
Lemma~\ref{lem-seq-lift}, we can avoid passing to a subsequence.

\begin{lemma} \label{lem-fix-8.38} Let $X$ be a first countable,
  locally compact and Hausdorff space, and let $F_n, F$ be closed
  subsets of $X$. Suppose that the sequence $I_{F_{n}}\to I_F$ in
  $\I\bigl(C_{0}(X)\bigr)$ and let $x\in F$. Then there exist
  $x_{n}\in F_{n}$ such that $x_{n}\to x$ in $X$.
\end{lemma}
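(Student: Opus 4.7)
The plan is to translate convergence in $\I(C_0(X))$ into a convergence condition on closed subsets of $X$, and then to run essentially the same diagonal construction as in the proof of \Cref{lem-seq-lift}.

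First, I would exploit the standard bijection between closed two-sided ideals of $C_0(X)$ and open subsets of $X$, given by $U\mapsto C_0(U)$. For an open set $U\subset X$ and a closed set $F'\subset X$, one has $C_0(U)\subset I_{F'}$ precisely when $U\cap F'=\emptyset$. Hence, under the identification $F'\leftrightarrow I_{F'}$, the subbasic open set $\O_{C_0(U)}$ of $\I(C_0(X))$ corresponds to $\{F':F'\cap U\neq\emptyset\}$. Consequently, $I_{F_n}\to I_F$ translates into the lower-Vietoris condition: for every open $U\subset X$ with $U\cap F\neq\emptyset$, eventually $F_n\cap U\neq\emptyset$. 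Applying this with $U=X$ (which meets $F$ since $x\in F$) also shows that $F_n$ is nonempty for all sufficiently large $n$.

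Next, by first countability, I would fix a decreasing countable neighborhood base $U_1\supset U_2\supset\cdots$ of open sets at $x$. Since $x\in F\cap U_k$ for each $k$, the translated convergence produces integers $m(k)$ with $F_n\cap U_k\neq\emptyset$ for every $n\geq m(k)$, and I would arrange $m(1)<m(2)<\cdots$. For $m(k)\leq n<m(k+1)$ I then choose $x_n\in F_n\cap U_k$, and for the finitely many remaining indices I pick $x_n\in F_n$ arbitrarily (using that $F_n$ is eventually nonempty). To verify $x_n\to x$, given any open neighborhood $V$ of $x$ choose $k_0$ with $U_{k_0}\subset V$; for every $n\geq m(k_0)$ there is a $k\geq k_0$ with $x_n\in U_k\subset U_{k_0}\subset V$, so $x_n\to x$.

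The main obstacle will be the first step: the topology on $\I(C_0(X))$ is defined via the ideal-containment subbasis $\O_J$, so I need to make the dictionary between this topology and the lower-Vietoris topology on the closed subsets of $X$ fully precise before translating the hypothesis. Once this dictionary is in hand, the remaining diagonal construction is a routine adaptation of the proof of \Cref{lem-seq-lift}.
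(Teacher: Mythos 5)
Your proposal is correct and follows essentially the same route as the paper: since $C_0(U)=I_{X\setminus U}$, your ``lower-Vietoris'' dictionary is exactly the paper's observation that $I_F\in\O_{I_{X\setminus U_k}}$ forces $F_n\cap U_k\neq\emptyset$ eventually, and the subsequent diagonal choice of $x_n$ over a nested neighborhood base is the same argument (the trick of \Cref{lem-seq-lift}). The only point to make precise is the Urysohn-type argument (valid since $X$ is locally compact Hausdorff) showing $C_0(U)\subset I_{F'}$ iff $U\cap F'=\emptyset$, which is routine.
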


\begin{proof} Let $\sset{U_{k}}_{k=1}^{\infty} $ be a neighborhood
  basis at $x$ consisting of open sets such that
  $U_{k+1} \subset U_{k}$. Then for all $k$ we have
  $x\notin X\setminus U_k$ and hence $I_F\in \O_{I_{X\setminus U_k}}$.
  Since $I_{F_{n}}\in \O_{I_{X\setminus U_k}}$ eventually, there
  exists $m_{k}$ such that $n\ge m_{k}$ implies
  $I_{F_{n}}\in \O_{I_{X\setminus U_k}}$. We can take $m_{k+1}>m_{k}$
  for all $k$.  Notice that $F_{n}\cap U_{k}\not=\emptyset$ for
    $n\ge m_{k}$. 

  Now we choose $\{x_n\}$ as follows.  If $n<m_{1}$, let
  $x_{n}\in F_{n}$ be arbitrary.  If $m_{i}\le n < m_{i+1}$, choose
  $x_{n}\in F_{n}\cap U_{i}$.  To see that $x_n\to x$, let $U$ be any
  neighborhood of $x$.  There exists $i_{0}$ such that
  $U_{i_{0}}\subset U$.  Then if $n\ge m_{i_{0}}$, we have
  $x_{n}\in U_i\subset U$.  Thus $x_{n}\to x$.
\end{proof}

Recall that $G$ acts on $G\z$ by $\gamma\cdot s(\gamma)=r(\gamma)$. A
subset $F$ of $G\z$ is called \emph{$G$-invariant} if
$G\cdot F\subset F$; equivalently, $F$ is \emph{saturated} in the
sense that $F=r(s\inv(F))$.  We say $I\in \I(C_{0}(G\z))$ is
\emph{$G$-invariant} if $I=I_F$ where $F$ is a closed $G$-invariant
subset of $G\z$.  We let $\Ig(C_{0}(G\z))$ be the lattice of
$G$-invariant ideals in $\I(C_{0}(G\z))$.

\begin{prop}
  \label{prop-M-map} Let $G$ be a locally compact, Hausdorff
  groupoid, let $C_{b}(\go)$ be the bounded continuous functions
    on $\go$, and let $M(\cs(G))$ be the multiplier algebra.
  \begin{enumerate}
  \item There is an injective homomorphism
    $ V\colon C_b(G\z)\to M(\Cst(G)) $ such that for $h\in C_b(G\z)$
    and $f\in C_c(G)$ we have
    \begin{equation}\label{eq-M-map}(V(h)f)(\gamma)=h(r(\gamma))f(\gamma)
      \quad \text{and} \quad
      (fV(h))(\gamma)=h(s(\gamma))f(\gamma).
    \end{equation}
  \item If $J\in \I(\cs(G))$, then
    \[V^{*}(J)=\set{h\in C_0(G\z): \text{$V(h)a\in J$ for all
        $a\in \cs(G)$}}\] is an ideal in $\Ig(C_{0}(\go))$.
  \item The map $V^{*}\colon \I(\cs(G))\to \Ig(C_{0}(\go))$ is
    continuous and preserves intersections.
  \item If $L\colon \Cst(G)\to B(H)$ is a nondegenerate
    representation, then $V^{*}(\ker L)=\ker (\overline{L}\circ V)$,
    where $\overline{L}$ is the extension of $L$ to the multiplier
    algebra.
  \item If $P\in \Prim\cs(G)$, then there exists $u\in G\z$ such that
    $V^{*}(P)=I_{\overline{[u]}}$.
  \end{enumerate}
\end{prop}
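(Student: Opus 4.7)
The plan is to prove the five parts in order, with (a), (b), and (d) by direct computation, (c) by a Fell-criterion argument, and (e) by combining the earlier parts with primeness of $P$ and a Baire-category argument. The main obstacle will be the identification in (e).

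For (a), define $V(h)$ on $C_{c}(G)$ by \eqref{eq-M-map}; a Fubini calculation using left-invariance of the Haar system shows that $V(h)$ is a double centralizer, with both actions bounded by $\|h\|_{\infty}$ in the $I$-norm, so $V(h)$ extends to a multiplier of $\cs(G)$. That $V$ is a $*$-homomorphism is a direct check on $C_{c}(G)$, and injectivity follows by evaluating $V(h)f$ at a unit $u$ with $f(u)\neq 0$. For (b), the algebraic ideal property is immediate since $V$ is a homomorphism: $V(kh)a=V(k)V(h)a\in J$. Writing $V^{*}(J)=I_{F}$, the identity $V(\bar h)=V(h)^{*}$ together with $J=J^{*}$ shows that $h\in V^{*}(J)$ iff also $aV(h)\in J$ for all $a$; now using \eqref{eq-M-map} with $a\in C_{c}(G)$ localised near a given $\gamma_{0}\in G$ forces $h(r(\gamma_{0}))$ and $h(s(\gamma_{0}))$ to vanish together, which is precisely $G$-invariance of $F$.

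Part (d) then drops out immediately: $h\in V^{*}(\ker L)$ iff $L(V(h)a)=\bar L(V(h))L(a)=0$ for all $a$, iff $\bar L(V(h))=0$ by nondegeneracy of $L$. For (c), preservation of intersections is immediate from the universal quantifier in the definition of $V^{*}$. For continuity, I would imitate Lemma~\ref{lem-fix-8.38}: given $h\not\in V^{*}(J)$, pick $a\in\cs(G)$ with $V(h)a\not\in J$; semicontinuity of the quotient norm in $J$ shows that $V(h)a$ remains outside $J_{n}$ for ideals $J_{n}$ in a suitable neighbourhood of $J$, which translates into openness of the $V^{*}$-preimage of each subbasic $\O_{I_{U}}$.

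For (e), let $P=\ker L$ with $L$ irreducible, so by (b) and (d), $V^{*}(P)=I_{F}$ for a closed $G$-invariant $F$. For each closed $G$-invariant $F'\subset\go$, the restriction quotient $\cs(G)\to\cs(G\restr{F'})$ has kernel $J_{F'}=\overline{V(I_{F'})\cs(G)}$ satisfying $V^{*}(J_{F'})=I_{F'}$; in particular, $V(I_{F})\cs(G)\subset P$ by the definition of $V^{*}$, so $J_{F}\subset P$. If $F=F_{1}\cup F_{2}$ with $F_{i}$ closed $G$-invariant, then $J_{F_{1}}\cap J_{F_{2}}=J_{F_{1}\cup F_{2}}=J_{F}\subset P$, and primeness of $P$ yields $J_{F_{i}}\subset P$ for some $i$, hence $F\subset F_{i}$. \emph{The main obstacle} is promoting this finite-decomposition property to the conclusion $F=\overline{[u]}$ for a single $u\in F$: I would use second countability of $\go$ together with openness of the range map to observe that the saturations $[U_{n}]$ of a countable basis $\{U_{n}\}$ of $\go$ are $G$-invariant open sets, then apply the Baire category theorem inside the locally compact Hausdorff space $F$ to locate a point $u$ in the dense $G_{\delta}$ set $\bigcap\{F\cap [U_{n}]:U_{n}\cap F\neq\emptyset\}$, which forces $\overline{[u]}=F$.
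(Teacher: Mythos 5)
Your parts (a), (c) and (d) are fine (for (c) you do not even need semicontinuity: $\{J : b\notin J\}$ is precisely the subbasic open set $\O_{\overline{\cs(G)b\cs(G)}}$, so $\{J: V(h)a\notin J\}$ is an open neighbourhood of $J_{0}$ contained in $(V^{*})^{-1}(\O_{K})$). The genuine gap is in (b), in the claim that ``localising $a\in C_{c}(G)$ near $\gamma_{0}$ forces $h(r(\gamma_{0}))$ and $h(s(\gamma_{0}))$ to vanish together.'' Membership of $V(h)a$ or $aV(h)$ in $J$ is not a pointwise condition, so no choice of localised $a$ lets you evaluate anything at $\gamma_{0}$; and the pointwise statement itself is false. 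For example, if $J=\cs(G)$ then $V^{*}(J)=C_{0}(\go)$, which contains many $h$ with $h(r(\gamma_{0}))\neq 0=h(s(\gamma_{0}))$; or take $G$ the disjoint union of the full equivalence relation on $\{u,v\}$ and a point $w$, so $\cs(G)=M_{2}\oplus\CC$, and $J=M_{2}\oplus 0$: then $V^{*}(J)=I_{\{w\}}$, which again contains such $h$, even though its hull $\{w\}$ is invariant. What is $G$-invariant is the \emph{common} zero set $F$ of the ideal $V^{*}(J)$, not the zero set of each of its elements, and that is the nontrivial content of (b). The paper obtains it by writing $J=\ker L$ for a nondegenerate representation $L$, using (d) to identify $V^{*}(J)=\ker(\overline{L}\circ V)$, and then invoking the invariance of $\ker(\overline{L}\circ V)$ from \cite[Proposition~5.5]{Williams:groupoid}, which rests on quasi-invariance of the measure class of $L|_{C_{0}(\go)}$; your sketch has no substitute for this input, and I see no way to extract it from a localisation argument with a single $h$.

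By contrast, your (e) is a correct and essentially self-contained alternative to the paper's citation of \cite[Proposition~5.9]{Williams:Crossed}: granting (b), the prime-decomposition plus Baire argument does yield $F=\overline{[u]}$. To make it complete you should check that $J_{F'}=\overline{V(I_{F'})\cs(G)}$ equals $\overline{\operatorname{span}}\,C_{c}\bigl(G|_{\go\setminus F'}\bigr)$ and hence is a closed \emph{two-sided} ideal (a support computation using that the open complement is saturated, so $r^{-1}(\go\setminus F')=s^{-1}(\go\setminus F')$); this also gives the lattice identity $J_{F_{1}}\cap J_{F_{2}}=\overline{J_{F_{1}}J_{F_{2}}}=J_{F_{1}\cup F_{2}}$ that your primeness step needs. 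You also need that closures of invariant subsets of $\go$ are invariant (openness of $s$ plus Lemma~\ref{lem-seq-lift}) so that density of the relatively open invariant sets $F\cap[U_{n}]$ follows from the irreducibility of $F$, and you should note $F\neq\emptyset$ because $\overline{V(C_{0}(\go))\cs(G)}=\cs(G)$ while $P$ is proper. The asserted equality $V^{*}(J_{F'})=I_{F'}$ is not actually needed—only $V(I_{F'})\cs(G)\subset P\Rightarrow I_{F'}\subset V^{*}(P)$, which is immediate. So the one thing to repair is the invariance argument in (b).
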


\begin{proof} The existence of the homomorphism $V$ is proved in
  \cite[Lemma~1.48]{Williams:groupoid}. Here we make the additional
  observation that $V$ is injective.  To see this, suppose that
  $V(h)=0$. Let $u\in G\z$. There exist $\gamma\in G$ with
  $r(\gamma)=u$ and $f\in C_c(G)$ such that $f(\gamma)\neq 0$. Now
  $0=(V(h)f)(\gamma)=h(u)f(\gamma)$, that is, $h(u)=0$. Thus $h=0$,
  and $V$ is injective.

  By \cite[Proposition~9]{green:am78},
  $V^{*}\colon \I(\cs(G))\to \I(C_{0}(\go))$ is continuous and
  preserves intersections.  By
  \cite[Proposition~5.5]{Williams:groupoid},
  $\ker(\overline{L}\circ V)$ is $G$-invariant. So to see that $V^{*}$
  takes values in $\Ig(C_{0}(\go))$, as claimed, we show that
  $V^{*}(\ker L)=\ker (\overline{L}\circ V)$.  First let
  $h\in V^{*}(\ker L)$.  Then for all $a\in \cs(G)$, we have
  $V(h)a\in \ker L$.  Therefore $\overline{L}\circ V(h)L(a)=0$ for all
  $a\in \Cst(G)$.  Since $L$ is nondegenerate,
  $\overline{L}\circ V(h)=0$ and
  $V^{*}(\ker L) \subset \ker(\overline{L}\circ V) $.  Second, suppose
  that $h\in\ker(\overline{L}\circ V)$.  Then $V(h)a\in \ker L$ for
  all $a\in \cs(G)$, that is, $h\in V^{*}(\ker L)$, giving
  $\ker(\overline{L}\circ V)\subset V^{*}(\ker L)$.

  Finally, if $L$ is an irreducible representation, then
  $\ker(\overline{L}\circ V)=I_{\overline{[u]}}$ for some $u\in G\z$
  by \cite[Proposition~5.9]{Williams:Crossed}.
\end{proof}

We give the orbit space $\gugo$ the quotient topology.  In general,
$\gugo$ may not even be a $T_{0}$ topological space.  Then we work
with the \emph{$T_0$-isation},
$(G\backslash G\z)^{\sim}\coloneqq G\z/\!\!\sim$ defined in
\cite[Definition~6.9]{Williams:Crossed}, which is the quotient of
$\gugo$ where $G\cdot u\sim G\cdot v$ if and only if
$\overline{G\cdot u}=\overline{G\cdot v}$.  Then
$(G\backslash G\z)^{\sim}$ has the quotient topology, and the natural
map $k\colon G\z\to (G\backslash G\z)^{\sim}$ is open as well as
continuous by \cite[Lemma~4.9]{vanWyk-Williams}.

\begin{definition}\label{def-quasi-orbit-map}
  Let $G$ be a groupoid.  Define the \emph{quasi-orbit map}
  $p\colon \Prim \cs(G)\to 
  (G\backslash G\z)^{\sim}$ by $p(P)=k(u)$ where
  $V^{*}(P)=I_{\overline{[u]}}$.  (Notice that $p$ is well-defined by
  Proposition~\ref{prop-M-map}.)
\end{definition}

\begin{lemma}\label{qorbit-cts} Let~$G$ be a second-countable, locally compact, Hausdorff groupoid with a Haar system, and let $p\colon \Prim\Cst(G)\to (G\backslash G\z)^{\sim}$ be the quasi-orbit map.
Let  $P_{n}\to P$ in  $\Prim\cs(G)$ and let $u, u_n\in G\z$ be such that $p(P)=k(u)$ and $p(P_{n})=k(u_{n})$. Then there exist $\gamma_{n}\in G$ such that $\gamma_{n}\cdot u_{n}\to u_{0}$.  In particular, $p$ is continuous.
\end{lemma}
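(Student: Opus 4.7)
The plan is to transfer the convergence $P_n\to P$ down to $\go$ via the continuous ideal map $V^{*}$, then promote the resulting convergence in $\go$ to one along the orbits, and finally invoke continuity of $k$.  By \cref{prop-M-map}\partref{5} we may write $V^{*}(P)=I_{\overline{[u]}}$ and $V^{*}(P_{n})=I_{\overline{[u_{n}]}}$, and by \cref{prop-M-map}\partref{3} the map $V^{*}$ is continuous, so $I_{\overline{[u_{n}]}}\to I_{\overline{[u]}}$ in $\Ig(C_{0}(\go))$.

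Since $\go$ is first countable (as $G$ is second countable) and $u\in\overline{[u]}$, \cref{lem-fix-8.38} produces a sequence $v_{n}\in\overline{[u_{n}]}$ with $v_{n}\to u$ in $\go$.  The key step is to replace each $v_{n}$ by an element of the orbit $[u_{n}]$ itself.  Fix a nested neighborhood base $\set{U_{k}}$ at $u$ with $U_{k+1}\subset U_{k}$, and choose strictly increasing indices $m_{k}$ with $v_{n}\in U_{k}$ whenever $n\ge m_{k}$.  For $m_{k}\le n<m_{k+1}$, the open set $U_{k}$ meets $\overline{[u_{n}]}$ (it contains $v_{n}$), hence also meets the dense subset $[u_{n}]$ in the relative topology; pick any $w_{n}\in U_{k}\cap[u_{n}]$, and for $n<m_{1}$ set $w_{n}=u_{n}$.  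Then $w_{n}\to u$, and since $w_{n}\in[u_{n}]=r(s^{-1}(u_{n}))$ we may choose $\gamma_{n}\in G$ with $s(\gamma_{n})=u_{n}$ and $r(\gamma_{n})=w_{n}$, so that $\gamma_{n}\cdot u_{n}=w_{n}\to u$.

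For the continuity of $p$, second countability of $G$ makes $\Cst(G)$ separable and hence $\Prim\Cst(G)$ second countable, so continuity reduces to sequential continuity.  Given any $P_{n}\to P$ and any choices $u,u_{n}$ with $p(P)=k(u)$ and $p(P_{n})=k(u_{n})$, the sequence $\gamma_{n}$ constructed above together with the continuity of $k$ and its constancy on orbits yield $p(P_{n})=k(u_{n})=k(\gamma_{n}\cdot u_{n})\to k(u)=p(P)$.

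I expect the main obstacle to be the promotion of $v_{n}\in\overline{[u_{n}]}$ to $w_{n}\in[u_{n}]$ with the same limit.  Continuity of $V^{*}$ and \cref{lem-fix-8.38} only deliver points in the closures of the orbits, whereas the conclusion demands genuine orbit elements.  This is where first countability of $\go$ is essential, and the argument parallels the subsequence-avoidance trick used in \cref{lem-seq-lift} and \cref{lem-fix-8.38}.
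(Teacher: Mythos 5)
Your proposal is correct and follows essentially the same route as the paper: push $P_{n}\to P$ through the continuous map $V^{*}$ of \cref{prop-M-map}, apply \cref{lem-fix-8.38} to get $v_{n}\in\overline{[u_{n}]}$ converging to $u$, and then replace these closure points by genuine orbit points $\gamma_{n}\cdot u_{n}$. The only difference is cosmetic: where the paper simply invokes metrisability of $\go$ for that last replacement, you spell it out with a nested-neighborhood-base (first countability) argument, and you make explicit the reduction of continuity of $p$ to sequential continuity via second countability of $\Prim\Cst(G)$.
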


\begin{proof}
 Since $k$ is continuous, the last statement
  follows from the first.  By assumption,
  $V^{*}(P_{n})=I_{\overline{G\cdot u_{n}}}$. Since $V^{*}$ is
  continuous by \Cref{prop-M-map}, $V^{*}(P_{n})\to V^{*}(P)$ in
  $\I(C_{0}(\go))$.  By Lemma~\ref{lem-fix-8.38} there exist
  $v_{n} \in \overline {G\cdot u_{n}}$ such that $v_{n}\to u$.  But
  since $\go$ is metrisable, we can replace $v_{n}$ with
  $\gamma_{n}\cdot u_{n}$ for some $\gamma_{n}\in G$.
\end{proof}

\begin{thm} \label{qorbit-open}
  Let $G$ be a  second-countable, locally compact, Hausdorff groupoid  with a Haar system.    Then  the quasi-orbit map $p\colon \Prim\Cst(G)\to (G\backslash G\z)^{\sim}$ is  open.
\end{thm}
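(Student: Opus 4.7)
The plan is to apply the Sequence Lifting Lemma (\Cref{lem-seq-lift}) twice. This is available because $\Prim\Cst(G)$ is second countable (as $\Cst(G)$ is separable when $G$ is second countable) and $(G\backslash G\z)^{\sim}$ is first countable (being an open quotient of the second countable space $G\z$). So the task reduces to the following sequence-lifting property: given a sequence $Q_{n}\to Q$ in $(G\backslash G\z)^{\sim}$ and a primitive ideal $P\in p^{-1}(Q)$, produce primitive ideals $P_{n}\in\Prim\Cst(G)$ with $P_{n}\to P$ in the Jacobson topology and $p(P_{n})=Q_{n}$.

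The first step is to lift $Q_n\to Q$ to the unit space.  By \cite[Lemma~4.9]{vanWyk-Williams}, the quotient $k\colon G\z\to(G\backslash G\z)^{\sim}$ is a continuous open surjection, so \Cref{lem-seq-lift} applied to $k$ and to a chosen $u\in G\z$ with $k(u)=Q$ and $V^{*}(P)=I_{\overline{[u]}}$ yields a sequence $u_{n}\to u$ in $G\z$ with $k(u_{n})=Q_{n}$. It remains to construct the primitive ideals $P_{n}$ with $V^{*}(P_{n})=I_{\overline{[u_{n}]}}$ converging to $P$.

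For the ``$V^{*}$-image'' requirement, the natural candidates come from the regular representations.  A direct calculation from \eqref{eq-M-map} shows that $\overline{L^{u_{n}}}\circ V(h)$ is multiplication by $h\circ r$ on $L^{2}(G_{u_{n}},\lambda_{u_{n}})$, which vanishes exactly when $h$ vanishes on $\overline{[u_{n}]}$; hence $V^{*}(\ker L^{u_{n}})=I_{\overline{[u_{n}]}}$.  Since $V^{*}$ preserves intersections by \Cref{prop-M-map} and $\ker L^{u_{n}}$ is the intersection of the primitive ideals containing it, there exists some primitive ideal $P_{n}\supseteq\ker L^{u_{n}}$ with $V^{*}(P_{n})=I_{\overline{[u_{n}]}}$, so that $p(P_{n})=Q_{n}$ automatically.

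The main obstacle, and where the work really lies, is arranging simultaneously the convergence $P_{n}\to P$ for the \emph{prescribed} $P\in p^{-1}(Q)$, rather than just some primitive ideal over $Q$.  I expect this to use the amenability of $G$ mentioned in the introduction, which on the one hand forces $\Cst(G)=\Cst_{\red}(G)$ and on the other hand ensures that every primitive ideal has the form $\ker\Ind_{G(v)}^{G}\pi$ for an irreducible representation $\pi$ of the isotropy group $G(v)$.  Together with continuity of induction and Fell's criterion on $\Prim\Cst(G)$, this should allow one to refine the selection of $P_{n}\supseteq\ker L^{u_{n}}$ compatibly with a fixed $P\supseteq\ker L^{u}$ so that every Jacobson-neighbourhood of $P$ eventually contains $P_{n}$; after this refinement, $P_{n}\to P$ and the sequence-lifting property for $p$ is established, proving that $p$ is open.
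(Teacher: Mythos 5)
Your reduction of openness to a sequence-lifting statement and the lift of $Q_n\to Q$ to $u_n\to u$ through the open map $k$ are fine, but the proposal has two genuine problems. First, the intermediate claim that ``$V^{*}$ preserves intersections'' yields a primitive ideal $P_n\supseteq\ker L^{u_n}$ with $V^{*}(P_n)=I_{\overline{[u_n]}}$ does not follow: monotonicity of $V^{*}$ only gives $V^{*}(P)\supseteq I_{\overline{[u_n]}}$, i.e.\ $\overline{[v_P]}\subseteq\overline{[u_n]}$, for each primitive $P\supseteq\ker L^{u_n}$, and the fact that the union of these orbit closures is dense in $\overline{[u_n]}$ does not force any single one to equal $\overline{[u_n]}$. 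The correct device for producing primitive ideals lying exactly over $k(u_n)$ is induction from the isotropy group $G(u_n)$ (this is \Cref{lem-irrep}, resting on the Ionescu--Williams irreducibility theorem), not the regular representation plus intersection-preservation.

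Second, and decisively, the step you defer --- arranging $P_n\to P$ for the \emph{prescribed} $P\in p^{-1}(Q)$ --- is the entire content of the theorem, and ``continuity of induction plus Fell's criterion'' is not an available off-the-shelf fact: the paper has to build the bundle $\Sigma$ over the space $\Sigma_0$ of closed subgroups and extend the van Wyk--Williams constructions to non-abelian isotropy to prove that $u\mapsto\ker\bigl(\Indx u(\lambdax u)\bigr)$ is continuous (\Cref{lem-wykwil3.2}). Moreover, the paper never proves your exact lifting statement. It argues by contradiction: assuming $p(U)$ is not a neighbourhood of $p(J)$, it chooses $u_n\to u$ with $u_n\notin k^{-1}(p(U))$, uses amenability twice (faithfulness of $\lambdax u$, so $J\supseteq\ker\Indx u(\lambdax u)$, and $\ker\Indx y(\lambdax y)=\bigcap_{K\in\Prim\Cst(G(y))}\ker\Indx y(K)$ since Rieffel induction preserves direct sums), together with \Cref{lem-wykwil3.2}, to conclude that $J$ lies in the closure of the set of all induced primitive ideals over the $u_n$, which is contained in the closed set $\Prim\Cst(G)\setminus U$ --- contradicting $J\in U$. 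This closure argument is strictly weaker than, and sidesteps, your requirement that each approximant lie over the prescribed $Q_n$ while the whole sequence converges to $P$; if you insist on the lifting formulation you would still need a subsequence/diagonal argument to recover quasi-orbits $Q_n$ from a sequence extracted from that closure, and no such argument is given. Note also that amenability (and the Effros--Hahn-type fact that every primitive ideal is induced) is used essentially in the paper's proof, so your expectation that it enters is right, but the mechanism is the one above, not a refinement of a choice of $P_n\supseteq\ker L^{u_n}$.
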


In an earlier version of this article, we proved that the quasi-orbit map is open when the  groupoid is amenable with a  Hausdorff orbit space. 
The proof of \Cref{qorbit-open}  below was shown to us by Sergey Neshveyev.  For the proof, we start with two lemmas that are not in the literature.
If $H$ is a group, we let $\lambda_{H}$ be the left-regular
representation of $H$ on $L^{2}(H)$.

\begin{lemma}\label{lem-wykwil3.2}  
Let $u_{n}\to u$ in $\go$.   Then
  $\ker\bigl( \Indx {u_{n}}(\lambdax {u_{n}})\bigr) \to
  \ker\bigl(\Indx u(\lambdax u)\bigr)$ in $\I(\cs(G))$.
\end{lemma}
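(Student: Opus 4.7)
The plan is to reduce the convergence of the ideals $\ker\bigl(\Indx{u_{n}}(\lambdax{u_{n}})\bigr)$ to a lower semicontinuity property of $u\mapsto\|L^{u}(a)\|$ for $a\in\cs(G)$. First, we identify $\Indx{u}(\lambdax{u})$ with the regular representation $L^{u}$ on $L^{2}(G_{u},\lambda_{u})$ from Section~\ref{sec:preliminaries}; that inducing the left regular representation of $G(u)$ from $\cs(G(u))$ to $\cs(G)$ produces precisely $L^{u}$ is a standard consequence of the Rieffel induction construction (see, e.g., \cite[\S5]{Williams:groupoid}). The claim becomes $\ker L^{u_{n}}\to\ker L^{u}$ in $\I(\cs(G))$.

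Since the topology on $\I(\cs(G))$ is generated by the subbasic opens $\O_{J}=\set{I:J\not\subset I}$, this convergence is equivalent to the following: for every $J\in\I(\cs(G))$ with $J\not\subset\ker L^{u}$, we have $J\not\subset\ker L^{u_{n}}$ eventually. Choosing $a\in J$ with $L^{u}(a)\neq 0$, it suffices to prove that $L^{u_{n}}(a)\neq 0$ eventually, which is a consequence of lower semicontinuity of $u\mapsto\|L^{u}(a)\|$ at $u$.

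To establish that lower semicontinuity, first consider $f\in C_{c}(G)$. For $\xi,\eta\in C_{c}(G)$ the matrix coefficient
\[
 \phi_{\xi,\eta}(u)\coloneqq\bigl\langle L^{u}(f)\xi\restr{G_{u}},\,\eta\restr{G_{u}}\bigr\rangle
 =\int_{G_{u}}(f*\xi)(\gamma)\,\overline{\eta(\gamma)}\,\dd\lambda_{u}(\gamma)
\]
depends continuously on $u$ because $(f*\xi)\overline{\eta}\in C_{c}(G)$ integrates continuously against the Haar system; likewise $u\mapsto\|\xi\restr{G_{u}}\|_{2}^{2}=\int|\xi|^{2}\,\dd\lambda_{u}$ is continuous. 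By density of $\set{\xi\restr{G_{u}}:\xi\in C_{c}(G)}$ in $L^{2}(G_{u},\lambda_{u})$ we may write
\[
 \|L^{u}(f)\|=\sup\Bigl\{\frac{|\phi_{\xi,\eta}(u)|}{\|\xi\restr{G_{u}}\|_{2}\,\|\eta\restr{G_{u}}\|_{2}}:\xi,\eta\in C_{c}(G),\ \xi\restr{G_{u}},\eta\restr{G_{u}}\neq 0\Bigr\},
\]
a supremum of functions continuous in a neighborhood of any chosen unit (where the denominators are strictly positive); hence $u\mapsto\|L^{u}(f)\|$ is lower semicontinuous. A routine norm approximation $f\to a$ with $f\in C_{c}(G)$ then extends lower semicontinuity to all $a\in\cs(G)$ and completes the argument.

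The main obstacle is this last step: the Hilbert spaces $L^{2}(G_{u},\lambda_{u})$ on which $L^{u}$ acts depend genuinely on $u$, so one cannot fix a single vector for the matrix coefficient. We compensate by using restrictions of globally defined elements of $C_{c}(G)$ together with the continuity of the Haar system, which is precisely what makes the identification of the norm as a supremum of near-continuous ratios work.
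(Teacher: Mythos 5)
Your argument is correct, but it follows a genuinely different route from the paper. The paper works with the bundle $\Sigma=\set{(H,t)\in\so\times G: t\in H}$ of closed subgroups (Fell topology), builds the Hilbert $\cs(\Sigma)$-module $\mathsf X$ of van Wyk--Williams in the possibly nonabelian setting, and uses two ingredients: the evaluation characters $(\{u_{n}\},1)\to(\{u\},1)$ pointwise, hence $\ker(\{u_{n}\},1)\to\ker(\{u\},1)$ in $\I(\cs(\Sigma))$, and the continuity of the induced-ideal map $\ker L\mapsto\ker\bigl(\Ind_{\Sigma}^{G}(L)\bigr)$; induction in stages (Ionescu--Williams) then converts $\Ind_{\{u\}}^{G}1$ into $\Indx u(\lambdax u)$. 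You instead identify $\Indx u(\lambdax u)$ with the regular representation $L^{u}$ on $L^{2}(G_{u},\lambda_{u})$ and reduce the ideal convergence, via the subbasic sets $\O_{J}$, to lower semicontinuity of $u\mapsto\|L^{u}(a)\|$, which you prove by hand: matrix coefficients $\int_{G_{u}}(f*\xi)\overline{\eta}\,\dd\lambda_{u}$ against restrictions of globally defined $C_{c}(G)$-functions are continuous in $u$ by continuity of the Haar system, the restrictions are dense in $L^{2}(G_{u},\lambda_{u})$ because $G_{u}$ is closed in $G$, and a uniform $\Cst$-norm approximation passes from $C_{c}(G)$ to all of $\cs(G)$. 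This is more elementary and self-contained---it avoids the subgroup bundle, the module $\mathsf X$, and the continuity-of-induction machinery entirely, and it works in exactly the generality of the lemma (no amenability or \'etaleness). What the paper's approach buys is a single framework that also delivers the equivalence $\Ind_{\Sigma}^{G}(\{u\},1)\sim\Ind_{\{u\}}^{G}(1)$ and fits the surrounding use of induced ideals; note that your one appeal to ``standard'' facts, the identification $\Indx u(\lambdax u)\sim\Ind_{\{u\}}^{G}(1)\sim L^{u}$, is precisely the induction-in-stages step the paper justifies by citing Ionescu--Williams, so you should cite that (or argue it) rather than leave it implicit. With that reference supplied, your proof is complete.
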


  When $G$ has abelian isotropy, \Cref{lem-wykwil3.2} is a straightforward consequence of \cite[Lemmas~3.1 and 3.2]{vanWyk-Williams}.   Fortunately,
  many of the constructions in 
  \cite[\S3]{vanWyk-Williams} go through in the general case.  Let $\Sigma_0$ be  the space of closed subgroups of $G$ with the Fell topology.  Then we can
  still form the group bundle $\Sigma=\set{(H,t)\in\so\times G:t\in H}$, which has  a Haar system, and thus we can
  form the groupoid $\cs$-algebra $\cs(\Sigma)$.   If $\pi$ is any
  representation of $H\in\so$, then we get a representation $(H,\pi)$ of
  $\cs(\Sigma)$.   The constructions prior to
  \cite[Lemma~3.2]{vanWyk-Williams} proceed \emph{mutatis mutandis} so
  that $\mathsf X_{0}\coloneqq C_{c}(G*\so)$ completes to a right Hilbert
  $\cs(\Sigma)$-module $\mathsf X$ admitting a homomorphism of $\cs(G)$ into
  $\mathcal L(\mathsf X)$.  Therefore we can use the Rieffel induction
  process to
  induce representations $L$ of $\cs(\Sigma)$ to representations
  $\Ind_{\Sigma}^{G}(L)$ of $\cs(G)$. In particular,  
  $\ker L\mapsto \ker \bigl(\Ind_{\Sigma}^{G}(L)\bigr)$ is continuous
  from $\I(\cs(\Sigma))$ to $\I(\cs(G))$ just as in \cite[\S3]{vanWyk-Williams}.

\begin{proof}[Proof of Lemma~\ref{lem-wykwil3.2}]
For each $u\in\go$, the representation $(\{u\},1)$ is a character of
$\cs(\Sigma)$ such that $(\{u_{n}\},1)(a)\to (\{u\},1)(a)$ for all
$a\in \cs(\Sigma)$.  Thus $\ker  (\{u_{n}\},1) \to \ker
(\{u\},1)$ in $\I(\cs(\Sigma))$.    Hence
$\ker\bigl(\Ind_{\Sigma}^{G}(\{u_{n}\} ,1)\bigr) \to
\ker\bigl(\Ind_{\Sigma}^{G} (\{u\},1)\bigr)$ in $\I(\cs(G))$.

Since \cite[Lemma~3.1]{vanWyk-Williams} holds in our more general
setting of non-abelian isotropy subgroups,  $\Ind_{\Sigma}^{G}(\{u\},1)$ is equivalent to
$\Ind_{\{u\}}^{G}(1)$.   By induction in stages (see
\cite[Theorem~4]{Ionescu-Williams:irrep}) we have
\begin{equation*}
  \label{eq:9}
  \Ind_{\{u\}}^{G}1\sim \Indx u \bigl(\Ind_{\{u\}}^{G(u)}1\bigr)
  \sim \Indx u(\lambdax u),
\end{equation*}
and the lemma follows.
\end{proof}

\begin{lemma}
  \label{lem-irrep} Let $I\in \Prim\cs(G(u))$.   Then $\ker \bigl(
  \Indx u (I)\bigr) \in 
  \Prim \cs(G)$ and \[p\bigl(\ker\Indx u(I)\bigr)=k(u).\]
\end{lemma}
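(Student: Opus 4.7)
The plan is to verify both assertions by combining the explicit form of the induced representation with Proposition~\ref{prop-M-map}(d). Let $\pi$ be an irreducible representation of $\cs(G(u))$ with $\ker\pi=I$, and set $L=\Indx u(\pi)$. Then $L$ is a nondegenerate representation of $\cs(G)$ acting on a Hilbert space $\H_{L}$ realised as a completion of a space of $\pi$-equivariant $\H_{\pi}$-valued functions on $G_{u}$ (see \cite[Definition~5.12]{Williams:groupoid}), and $L(f)$ is given by the usual convolution formula.

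For the primitivity of $\ker L$, although $L$ itself may fail to be irreducible, a standard argument from groupoid induction theory identifies $\ker L$ with the kernel of any of its irreducible subrepresentations. Concretely, any irreducible subrepresentation of $L$ has the form $\Indx v(\pi_{v})$ for some $v\in[u]$, where $\pi_{v}$ is the conjugate of $\pi$ along any $\gamma\in G$ with $r(\gamma)=v$ and $s(\gamma)=u$; since such conjugates are pairwise unitarily equivalent, the induced irreducibles share a common kernel, which by induction in stages (cf.\ \cite[Theorem~4]{Ionescu-Williams:irrep}) equals $\ker L$. Hence $\ker L\in\Prim\cs(G)$. This is the main obstacle: the quasi-orbit computation below is routine by comparison.

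For the equality $p(\ker L)=k(u)$, Proposition~\ref{prop-M-map}(d) gives $V^{*}(\ker L)=\ker(\overline{L}\circ V)$. A direct computation from~\eqref{eq-M-map} and the convolution formula yields that for $h\in C_{0}(\go)$, the operator $\overline{L}(V(h))$ acts by pointwise multiplication by $h\circ r$ on representatives of elements of $\H_{L}$ viewed as functions on $G_{u}$. Hence $\overline{L}(V(h))=0$ if and only if $h$ vanishes on $r(G_{u})=[u]$, which by continuity of $h$ is equivalent to $h$ vanishing on $\overline{[u]}$. Therefore $V^{*}(\ker L)=I_{\overline{[u]}}$, and Definition~\ref{def-quasi-orbit-map} gives $p(\ker L)=k(u)$.
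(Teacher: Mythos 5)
There is a genuine gap in your argument for primitivity. You begin by conceding that $L=\Indx u(\pi)$ ``may fail to be irreducible'' and then try to repair this by asserting that every irreducible subrepresentation of $L$ is of the form $\Indx v(\pi_{v})$ for $v\in[u]$, with $\pi_{v}$ a conjugate of $\pi$, and that these all share the kernel $\ker L$. That structural claim is exactly the kind of statement that requires proof---it is not a consequence of induction in stages (\cite[Theorem~4]{Ionescu-Williams:irrep} relates $\Ind_{\{u\}}^{G}$ to iterated induction; it says nothing about how an induced representation decomposes or about identifying kernels of its subrepresentations), and no result you cite delivers it. Moreover the premise itself is false in a way that matters: the theorem of Ionescu and Williams (\cite[Theorem~5]{Ionescu-Williams:irrep}, which is what the paper invokes) says precisely that inducing an \emph{irreducible} representation of the isotropy group $G(u)$ to $\cs(G)$ yields an \emph{irreducible} representation of $\cs(G)$, with no hypothesis that the orbit be closed or locally closed. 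So the correct route is one line: $L$ is irreducible, hence $\ker L\in\Prim\cs(G)$. As written, your proof replaces this known theorem by an unsubstantiated decomposition claim, so the ``main obstacle'' you identify is not actually overcome.

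Your second half is essentially sound and is a legitimate alternative to the paper's argument: the paper deduces $V^{*}(\ker L)=I_{\overline{[u]}}$ from \cite[Corollary~5.28]{Williams:groupoid} (support contained in $\overline{G\cdot u}$) together with \cite[Lemma~5.25]{Williams:groupoid} (orbit contained in the support), whereas you compute directly that $\overline{L}(V(h))$ is multiplication by $h\circ r$ on the module of functions on $G_{u}$. To make the ``only if'' direction of your equivalence airtight you should note that the measure $\sigma_{u}$ on $G_{u}/G(u)$ underlying $\H_{L}$ has full support (\cite[Corollary~3.44]{Williams:groupoid}), since otherwise a nonvanishing $h\circ r$ could a priori act as the zero multiplication operator on equivalence classes. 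With that remark added, the quasi-orbit computation stands; the primitivity step is the part that needs to be redone, most simply by citing the Ionescu--Williams irreducibility theorem.
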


\begin{proof}
  Let $L$ be an irreducible  representation $L$ of
  $\cs(G(u))$ such that $I=\ker L$.   Then $\ker \bigl(
  \Indx u (L)\bigr) \in 
  \Prim \cs(G)$ by \cite[Theorem~5]{Ionescu-Williams:irrep}.
  Furthermore, \cite[Corollary~5.28]{Williams:groupoid} implies that
  the support $F$ of $\Indx u (L)$ is a subset of $\overline {G\cdot u}$.   But
   $G\cdot u\subset F$ by
  \cite[Lemma~5.25]{Williams:groupoid}, and hence $F=\overline{G\cdot u}=k(u)$.
\end{proof}

\begin{proof}[Proof of Theorem~\ref{qorbit-open}]
  Suppose that $p$ is not open.  Then there exist $J\in\Prim\Cst(G)$
  and an open neighbourhood $U$ of $J$ in $\Prim\Cst(G)$ such that $p(U)$
  is not an open neighbourhood of $p(J)$.  Since $G$ is amenable,
  \cite[Theorem~2.1]{Ionescu-Williams-EH} implies that
  every primitive ideal is induced from a primitive ideal of a stability
  subgroup.  Hence $J=\Indx u( I)$ for some $u\in \go$ and
  $I\in\Prim\Cst(G(u))$.

  Since
  $(\gugo)^{\sim}$ has the quotient topology and since $p(U)$ is not an open neighbourhood of $p(J)$, 
  $k\inv(p(U))$ is not an open neighbourhood of $u$  in $\go$. Hence there exist $\sset{u_n}$ in
  $\go\setminus k^{-1}(p(U))$ such that $u_{n}\to u\in k^{-1}(p(U))$.
  Note that $p\bigl(\ker(\Indx{u_{n}}(I_{n})\bigr) =k(u_{n})$ for any
  $I_{n}\in \Prim 
  \cs(G(u_{n}))$ by Lemma~\ref{lem-irrep}.
In particular, we can find  $I_n\in
  \Prim\Cst(G(u_n))$ such that 
  $\Indx{u_{n}}( I_n)\notin U$ for all $n$.

  Since $G$, and hence $G(u)$, is amenable, the left-regular
  representation $\lambdax u$ of $G(u)$ is
  faithful.  Therefore
  \begin{equation*}
    I\supset
    \{0\}=\ker \lambdax u.
  \end{equation*}
  Hence
  \begin{equation*}
    J=\Indx u(I)
    \supset \ker \Indx u(\lambdax u).
  \end{equation*}

  Since $u_{n}\to u$, Lemma~\ref{lem-wykwil3.2} implies that
  \[\ker\bigl(\Indx{u_{n}}(\lambdax{u_{n}}) \bigr)\to \ker
  \bigl(\Indx u(\lambdax u)\bigr)\] in $\mathcal
  I(\cs(G))$.  As in \cite[Lemma~4.7]{echeme:em11} for example,
  this implies 
  \begin{align*}
    \ker \bigl(\Indx u(\lambdax u)\bigr)
    &\supset \bigcap
      \ker\bigl(\Indx {u_{n}}(\lambdax {u_{n}})\bigr).
  \end{align*}

  On the other hand, using amenability again, for any $y\in\go$,
  \begin{equation*}
    \ker \lambdax y=\{0\} =\bigcap_{K\in \Prim \cs(G(y))} K,
  \end{equation*}
  and since Rieffel induction preserves direct
  sums \cite[Lemma~1.10]{Rieffel:Ind-C-star}
      it follows that
  \begin{equation*}
    \ker \bigl(\Indx y(\lambdax y)\bigr) = \bigcap_{K\in \Prim
      \cs(G(y))} \ker\bigl(\Indx y K\bigr).
  \end{equation*}
  Therefore
  \begin{align*}
    J\supset \ker\bigl(\Indx u(\lambdax u)\bigr)
    &\supset \bigcap_{n\in \mathbf N} \bigcap_{K\in\Prim(\cs(G(u_{n}))}
      \ker\bigl(\Indx{u_{n}}K \bigr),
  \end{align*}
 which  implies that $J$ is in the closure of
  \begin{align}
    \label{eq:7}
    \set{\ker\bigl( \Indx{u_{n}}(K)\bigr):\text{$n\in \mathbf N$ and $K\in \Prim
        \cs(G(u_{n}))$}}.
  \end{align}
  However,
  since $p\bigl( \Indx{u_{n}}(K)\bigr)=k(u_{n})$ by \Cref{lem-irrep},  the set at \eqref{eq:7} is contained in
  $\Prim\cs(G)\setminus U$, contradicting that $J\in U$.
\end{proof}

\section{Subhomogeneous \texorpdfstring{\cs}{C*}-algebras of
  groupoids}
\label{sec:subh-cs-algebr}

A $\Cst$-algebra $A$ is \emph{subhomogeneous} if there exists
$M\in\NN$ such that for every irreducible representation
$\pi\colon A\to B(H)$ on a Hilbert space $H$, the dimension $\dim(H)$
of $H$ is at most $M$.  In this section we characterise when a
$\Cst$-algebra of a groupoid is subhomogeneous, thus extending
\cite[Proposition~2.5]{Bonicke-Li-Nuclear-dim} from the \'etale
setting. Our greater generality shows, for example, that extensions of
certain \'etale groupoids by a trivial circle bundle that have arisen
in \cite{GWY:2017:DAD, CDaHGV, Bonicke-Li-Nuclear-dim} are
subhomogeneous.  When $\Cst(G)$ is subhomogeneous, we study its ideal
structure in \Cref{composition series-abelian} and find bounds on the
nuclear dimension of $\Cst(G)$ in \Cref{nuclear-dim-for-subhomog}.

 \begin{definition}\label{def-subhomo}
   A locally compact and Hausdorff groupoid is \emph{homogeneous}
   (\emph{subhomogeneous}) if its full groupoid $\Cst$-algebra is
   homogeneous (subhomogeneous).
 \end{definition}

 If $\Cst(G)$ is subhomogeneous, then so is its quotient $\Cst_r(G)$;
 we show below that subhomogeneous groupoids are amenable and so the
 two $\Cst$-algebras are isomorphic.  We start by noting what we know
 about subhomogeneous \emph{groups}.

 \begin{prop}\label{subhomog groups} Let $S$ be a locally compact
   group.
   \begin{enumerate}
   \item\label{subhomog groups1} Then $S$ is subhomogeneous if and
     only if $S$ has an open and abelian subgroup of finite index.
   \item\label{subhomog groups2} If $S$ has an open and abelian
     subgroup of finite index, then it has an open, abelian and normal
     subgroup of finite index.
   \item\label{subhomog groups3} If $S$ is subhomogeneous, then $S$ is
     amenable.
   \item\label{subhomog groups4} Suppose that $S$ is discrete.  Then
     $S$ is Type~I if and only if it is subhomogeneous.
   \end{enumerate}
 \end{prop}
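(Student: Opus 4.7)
The plan is to combine classical group-theoretic results with the Mackey machine; I would handle the parts in the order \partref{2}, \partref{1}, \partref{3}, \partref{4} since each uses the previous.

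For \partref{2}, given an open abelian subgroup $A\le S$ with $[S:A]=n<\infty$, set $N=\bigcap_{gA\in S/A}gAg\inv$.  This is a finite intersection of open abelian subgroups (each conjugate of $A$ is open and abelian), so $N$ itself is open and abelian.  It is manifestly normal, and, being the kernel of the permutation action of $S$ on $S/A$, has index dividing $n!$.

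For \partref{1}, the reverse implication uses \partref{2} to assume $N$ is open, abelian, normal and of finite index $n$.  Every irreducible representation $\pi$ of $S$ restricts to a representation of the abelian $N$ which decomposes into characters, and the Mackey machine then shows $\pi$ is induced from an irreducible representation of the $S$-stabiliser $S_\chi$ of some character $\chi$ of $N$; since $N\subseteq S_\chi$, the induced representation has dimension at most $[S:S_\chi]\le n$.  The forward implication I would quote from Moore's theorem: a locally compact group whose irreducible representations have uniformly bounded dimension contains an open abelian subgroup of finite index.

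Part \partref{3} follows immediately: $S$ is an extension of the finite (hence amenable) group $S/N$ by the abelian (hence amenable) group $N$, and amenability is preserved by extensions.  For \partref{4}, a subhomogeneous group is Type~I because all irreducible representations being finite-dimensional forces $\Cst(S)$ to be CCR.  Conversely, every finite-index subgroup of a discrete group is automatically open, so Thoma's characterisation of discrete Type~I groups as precisely those with a finite-index abelian normal subgroup delivers the hypothesis of \partref{1}.  The substantive obstacles are the cited theorems of Moore and Thoma; the remaining steps are routine bookkeeping.
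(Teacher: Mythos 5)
Your treatment of parts (b), (c) and (d) is in substance identical to the paper's: the paper also passes to the intersection of the finitely many conjugates of the open abelian finite-index subgroup to obtain the normal one, deduces amenability from the abelian-by-finite structure, and obtains (d) by combining Thoma's theorem with (a) and (b). For part (a) the paper simply cites Moore's theorem, which is an if-and-only-if statement, so your only real departure is the attempt to re-prove the ``if'' direction by hand via the Mackey machine.

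That attempt contains a step that fails as stated. If $\pi=\Ind_{S_\chi}^{S}(\sigma)$ with $\sigma$ an irreducible representation of the stabiliser $S_\chi$ lying over the character $\chi$ of $N$, then $\dim\pi=[S:S_\chi]\cdot\dim\sigma$, not $[S:S_\chi]$, and the containment $N\subseteq S_\chi$ does not force $\dim\sigma=1$. A concrete counterexample to your bound: take $S$ the quaternion group of order $8$ and $N$ its centre, with $\chi$ the faithful character of $N$; then $S_\chi=S$, so your estimate would give $\dim\pi\le[S:S_\chi]=1$, yet $S$ has a $2$-dimensional irreducible representation lying over $\chi$. The argument is repairable: since $\sigma|_{N}$ is a multiple of $\chi$, $\sigma$ is determined by an irreducible projective representation of the finite group $S_\chi/N$, so $\dim\sigma\le[S_\chi:N]$ and hence $\dim\pi\le[S:S_\chi]\,[S_\chi:N]=[S:N]$, which is all that subhomogeneity needs. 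You should also justify the passage from ``$\pi|_N$ decomposes into characters'' to the Mackey picture: a priori $\pi|_N$ is only a direct integral over $\widehat N$, and one needs ergodicity of the associated projection-valued measure under the finite group $S/N$ to see that it is concentrated on a single finite orbit before applying the imprimitivity theorem. Both repairs are routine because $[S:N]<\infty$, but as written the dimension count is wrong; the cleanest course, and the one the paper takes, is to quote Moore's theorem for both directions.
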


 \begin{proof} Item~\cref{subhomog groups1} is
   \cite[Theorem~1]{Moore1972}.  For \cref{subhomog groups2}, let $H$
   be an open and abelian subgroup of $S$ with finite index.  Then
   $\set{sH s^{-1}:s\in S}$ is finite.  The intersection of groups of
   finite index has finite index.
   Thus
   \begin{equation*}
     H_0\coloneqq\bigcap_{s\in S} s H s^{-1}
   \end{equation*}
   is a \emph{normal} open subgroup of finite index, giving
   \cref{subhomog groups2}.  Item~\cref{subhomog groups3} follows from
   \cref{subhomog groups1} and \cref{subhomog groups2} because the
   extension of an abelian group by a finite group is amenable.
   Item~\cref{subhomog groups4} follows from \cref{subhomog groups1},
   \cref{subhomog groups2} and \cite[Theorem~6]{Thoma1964} which says
   that a discrete group $S$ is Type~I if and only if it has an
   abelian and normal subgroup of finite index.
 \end{proof}

\begin{thm}\label{subhomg-characterisation}
  Let~$G$ be a locally compact, Hausdorff groupoid.
  \begin{enumerate}
  \item\label{subhomg-characterisation-1} Let $u\in G\z$ and let
    $L=\Ind_{G(u)}^{G}( \pi)$ be the representation of $\cs(G)$
    induced from the representation $\pi$ of the isotropy group
    $G(u)$.  Then $L$ is finite dimensional if and only if the orbit
    $[u]$ is finite and $\dim \pi <\infty$.  In that case,
    \begin{equation}
      \label{eq:1}
      \dim L=\big| [u] \big| \cdot \dim \pi.
    \end{equation}
  \item\label{subhomg-characterisation-2} Then $G$ is subhomogeneous
    if and only if there exists $M\in\NN$ such that for all $u\in G\z$
    we have $\big| [u]\big |\le M$ and $G(u)$ is $n$-subhomogenous
    with $n \le M$.
  \item\label{subhomg-characterisation-3} If $G$ is subhomogeneous,
    then $G$ is amenable.
  \end{enumerate}
\end{thm}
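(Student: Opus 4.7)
The plan is to handle the three parts in order, with~\cref{subhomg-characterisation-1} providing the dimension count that drives~\cref{subhomg-characterisation-2}, and~\cref{subhomg-characterisation-3} feeding back into the backward direction.

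For~\cref{subhomg-characterisation-1}, I would unpack the Rieffel-induction construction of $L=\Ind_{G(u)}^G(\pi)$ from \cite[Definition~5.12]{Williams:groupoid}: the Hilbert space of $L$ is obtained by completing $C_c(G_u;H_\pi)$ under the inner product built from $\lambda_u$ and $\pi$, and the $G(u)$-equivariance $\xi(\gamma h)=\pi(h^{-1})\xi(\gamma)$ collapses it to $\ell^2(G_u/G(u))\otimes H_\pi$. The range map induces a bijection $G_u/G(u)\to [u]$, $\gamma G(u)\mapsto r(\gamma)$, which is well-defined because $r$ is right-$G(u)$-invariant, injective because $r(\gamma_1)=r(\gamma_2)$ forces $\gamma_2^{-1}\gamma_1\in G(u)$, and surjective by definition of the orbit. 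It follows that $L$ is finite-dimensional if and only if both $|[u]|$ and $\dim\pi$ are finite, and then $\dim L=|[u]|\cdot\dim\pi$, giving~\eqref{eq:1}.

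For~\cref{subhomg-characterisation-3}, subhomogeneity of $\Cst(G)$ passes to the quotient $\Cst_\red(G)$, which is therefore nuclear, so $G$ is amenable by the Anantharaman--Delaroche characterisation (applied in the second-countable locally compact Hausdorff setting). Armed with this, the forward direction of~\cref{subhomg-characterisation-2} proceeds as follows: $\Cst(G)$ is type~I (being subhomogeneous), hence $\gugo$ is $T_0$, so \cite[Theorem~5.35]{Williams:groupoid} identifies every irreducible representation of $\Cst(G)$ with some $\Ind_{G(u)}^G(\sigma)$ for irreducible $\sigma\in\widehat{G(u)}$, and the induction is itself irreducible by the type~I Mackey--Renault analysis.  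Part~\cref{subhomg-characterisation-1} then bounds $|[u]|\cdot\dim\sigma\le M$, which gives $|[u]|\le M$ on taking $\sigma$ trivial and shows each $G(u)$ is $M$-subhomogeneous on varying $\sigma$.

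For the backward direction of~\cref{subhomg-characterisation-2}, assume the uniform bound $M$. Each $G(u)$ is then amenable by~\cref{subhomog groups}\cref{subhomog groups3}, and together with the uniform finiteness of orbits an amenable-by-amenable argument yields amenability of $G$; the fact recalled at the end of~\S\ref{sec:preliminaries} realises every primitive ideal of $\Cst(G)$ as $\ker\Ind_{G(u)}^G(\sigma)$ for some irreducible $\sigma$ of $G(u)$, and~\cref{subhomg-characterisation-1} gives $\dim\Ind_{G(u)}^G(\sigma)=|[u]|\cdot\dim\sigma\le M^2$, so every simple quotient of $\Cst(G)$ is a matrix algebra of dimension at most $M^2$ and $G$ is $M^2$-subhomogeneous. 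The most delicate step is justifying, for the forward direction of~\cref{subhomg-characterisation-2}, that mere subhomogeneity of $\Cst(G)$ forces $\gugo$ to be $T_0$ and the induced representations from irreducible isotropy representations to be irreducible, so that \cite[Theorem~5.35]{Williams:groupoid} applies cleanly in our non-\'etale locally compact setting.
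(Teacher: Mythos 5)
Your part \cref{subhomg-characterisation-1} and the forward half of \cref{subhomg-characterisation-2} run essentially along the paper's lines: the paper realises $\Ind_{G(u)}^{G}(\pi)$ on $L^{2}(G_{u}/G(u),\sigma_{u},H_{\pi})$ for a Radon measure $\sigma_{u}$ of \emph{full support} (in the non-\'etale case this is not literally $\ell^{2}(G_u/G(u))\otimes H_\pi$, though the dimension count is unaffected), and it obtains the needed separation of $\gugo$ from subhomogeneous $\Rightarrow$ CCR together with \cite[Theorem~3.5]{vanWyk-CCR} rather than from type~I-ness. The genuine gap is in your proof of \cref{subhomg-characterisation-3}. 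You deduce amenability of $G$ from nuclearity of $\Cst_\red(G)$ via ``the Anantharaman--Delaroche characterisation'', but that characterisation (\cite[Corollary~6.2.14]{AnRe:Am-gps}) is a theorem about \emph{\'etale} groupoids, whereas the present theorem concerns arbitrary second countable, locally compact Hausdorff groupoids with Haar system and is later applied to twists $\Sigma$, which are never \'etale. No implication ``$\Cst_\red(G)$ nuclear $\Rightarrow$ $G$ amenable'' is available at this level of generality; the paper is careful on exactly this point, invoking \cite[Corollary~6.2.14]{AnRe:Am-gps} only in \Cref{main thm cor2}, where $G$ is \'etale. The paper instead deduces \cref{subhomg-characterisation-3} from \cref{subhomg-characterisation-2}: the orbits are finite, hence closed, so $\gugo$ is $T_{1}$; the isotropy groups are amenable by \Cref{subhomog groups}\cref{subhomog groups3}; and then $G$ is amenable by \cite[Theorem~9.86]{Williams:groupoid}.

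A second, related gap sits in your backward direction of \cref{subhomg-characterisation-2}. The phrase ``an amenable-by-amenable argument yields amenability of $G$'' is not an argument: it is precisely the nontrivial result just cited, and you cannot instead appeal to your part \cref{subhomg-characterisation-3}, which presupposes that $\Cst(G)$ is subhomogeneous---that would be circular; note also that $G/\Iso(G)$ need not be locally compact (the paper stresses this), so there is no routine extension-of-groupoids argument to fall back on. The detour through amenability and primitive ideals is in any case unnecessary: orbits of cardinality at most $M$ are finite, hence closed in the Hausdorff space $\go$, so $\gugo$ is $T_{1}$, and then every irreducible representation of $\cs(G)$ is induced from an irreducible representation of an isotropy group (\cite[Theorem~5.35]{Williams:groupoid}, as recalled in \Cref{sec:preliminaries}); part \cref{subhomg-characterisation-1} then bounds every irreducible dimension by $M^{2}$ directly, which is the paper's route. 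If you prefer your route, replace the hand-waved amenability step by an explicit appeal to \cite[Theorem~9.86]{Williams:groupoid} (or an equivalent result); with that repair, your primitive-ideal argument does yield the same $M^{2}$-subhomogeneity.
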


\begin{proof} 
For \cref{subhomg-characterisation-1}, let
  $u\in\go$ and let $\pi$ be a representation of the isotropy
  subgroup $G(u)$ on $H_{\pi}$. As described in
  \cite[Proposition~5.44]{Williams:groupoid}, the representation
  $\Ind_{G(u)}^{G} (\pi)$ of $\cs(G)$ induced from $\pi$ is equivalent
  to a representation $L^{u,\pi }$ acting on the Hilbert space
  $L^{2}_{\pi}(G_{u},\sigma_{u},H_{\pi})$ described in
  \cite[\S3.7]{Williams:groupoid}.   Further, it is observed in \cite[Lemma~3.47]{Williams:groupoid} that $L^{2}_{\pi}(G_{u},\sigma_{u},H_{\pi})$ is isomorphic
  to $L^{2}(G_{u}/G(u),\sigma_{u},H_{\pi})$. 
  
Here $\sigma_{u}$ is a
  Radon measure on $G_{u}/G(u)$ and
  \cite[Corollary~3.44]{Williams:groupoid} implies that $\sigma_{u}$
  has full support.  Since $\sigma_{u}$ has full support,
  $L^{2}(G_{u}/G(u),\sigma_{u})$ is finite dimensional if and only if
  $G_{u}/G(u)$ is finite, or, equivalently, if and only if the orbit
  $[u]$ of $u$ is finite.
Since $L^{2}(G_{u}/G(u),\sigma_{u},H_{\pi})$ is isomorphic to
  $L^{2}(G_{u}/G(u),\sigma_{u})\otimes H_{\pi}$ by, for example
  \cite[Lemma~I.13]{Williams:Crossed}, it follows that $L$ is
  finite-dimensional if and only if $|[u]|$ and $\dim H_{\pi}$ are finite, and if so the dimension of $ L$ is as in \eqref{eq:1}.

  For \cref{subhomg-characterisation-2}, first suppose that $G$ is
  $M$-subhomogeneous.  Then $\cs(G)$ is CCR.  By
  \cite[Theorem~3.5]{vanWyk-CCR}, the orbit space $\gmgo$ is
  $T_{1}$. Now every irreducible representation of $\cs(G)$ is induced
  from an isotropy group by
  \cite[Proposition~5.34]{Williams:groupoid}. Since every irreducible
  representation of $\Cst(G)$ has dimension at most $M$, it follows
  from \cref{subhomg-characterisation-1} that the orbits of $G$ are
  uniformly bounded by $M$ and that the isotropy subgroups are
  uniformly $M$-subhomogeneous.
    
  Conversely, suppose that the orbits are uniformly bounded and the
  isotropy subgroups are uniformly subhomogeneous.  Then the orbits
  are closed, that is, $\gmgo$ is $T_{1}$.  Then every irreducible
  representation of $\cs(G)$ is induced from a isotropy group.  Again,
  the result follows from \cref{subhomg-characterisation-1}.

  For \cref{subhomg-characterisation-3}, suppose that $G$ is
  subhomogeneous. Then by \cref{subhomg-characterisation-2}, $G$ has
  finite (hence closed) orbits and the isotropy groups are
  subhomogeneous.  Therefore $\gmgo$ is $T_{1}$, and all the isotropy
  groups are amenable by Proposition~\ref{subhomog groups}.  Therefore
  $G$ is amenable by \cite[Theorem~9.86]{Williams:groupoid}.
\end{proof}

The following example was told to us by Tyler Schultz.

\begin{example}\label{ex S3 acting} Consider the action of the symmetric group $S_3$ on
  $[-1,1]$ where $\omega\cdot x=x$ if $\omega$ is even and
  $\omega\cdot x=-x$ if $\omega$ is odd.  We let $G$ be the
  transformation-group groupoid.  Then the isotropy subgroup at $0$ is
  $S_3$, and if $x\neq 0$ then the isotropy subgroup is the
  alternating subgroup $A_3$ of $S_3$.  The isotropy subgroups do not
  vary continuously as $x\to 0$, and hence the spectrum of $\Cst(G)$
  is not Hausdorff by \cite[Proposition~1]{ech:tams1994}. The
  irreducible representations of $\Cst(S_3)$ are (the integrated forms
  of) the $1$-dimensional representations $1$ and $\sign$, and a
  $2$-dimensional representation $Q$. Since $A_3$ is abelian and
  isomorphic to $\ZZ/3\ZZ$, all its irreducible representations are
  $1$-dimensional. Since all the orbits are closed, all the
  irreducible representations of $\Cst(G)$ are induced: by
  Theorem~\ref{subhomg-characterisation} the $2$-dimensional ones are
  \[\set{\Ind_{G(u)}^G(\pi):u\neq 0,
      \pi\in\Cst(A_3)^\wedge}\cup\set{\Ind_{G(0)}^G(Q)}\] and the
  $1$-dimensional ones are
  \[
    \set{\Ind_{G(0)}^G(1), \Ind_{G(0)}^G(\sign)}.
  \]
  Thus $\Cst(G)$ is $2$-subhomogeneous.
\end{example}

Theorem~\ref{subhomg-characterisation} also applies to a large class
of examples arising as twists over subgroupoids of groupoids with
finite dynamic asymptotic dimension, as used in
\cite[Theorem~4.1]{CDaHGV}, \cite[Theorem~8.6]{GWY:2017:DAD}, and
\cite[Theorem~3.2]{Bonicke-Li-Nuclear-dim}.  We start by recalling the
definition of twist.

Let $G$ be a locally compact, Hausdorff groupoid, and view
$G\z \times\TT$ as a trivial group bundle with fibres $\TT$.  A
\emph{twist} $(\Sigma, \iota, \pi)$ over $G$ consists of a locally
compact and Hausdorff groupoid $\Sigma$ and groupoid homomorphisms
$\iota$, $\pi$ such that
\begin{equation*}
  \begin{tikzcd}
    \go\times\T \arrow[r,"\iota",hook] & \Sigma \arrow[r,"\pi",two
    heads] & G
  \end{tikzcd}
\end{equation*}
is a central groupoid extension; that is,
\begin{enumerate}
\item $\iota\colon G\z \times\TT\to \pi\inv (G\z)$ is a homeomorphism;
\item $\pi$ is continuous, open and surjective; and
\item $\iota(r(e), z)e=e\iota(s(e), z)$ for all $e\in \Sigma$ and
  $z\in\TT$.
\end{enumerate}

Suppose that $G$ is \'etale. Then $G\z=\Sigma\z$ is open in $G$ but it
is not open in $\Sigma$, and hence $\Sigma$ is never
\'etale. Nevertheless, $\Sigma$ has a natural Haar system: fix a
section $\mathfrak{c}\colon G\to \Sigma$ of $\pi$ and let
$\{\lambda^x\}$ be the Haar system on $\Sigma$ induced from the system
of counting measures on $G$, so that
\[
  \int f(e)\, \dd \lambda^x(e)=\sum_{\gamma\in xG}\int_\TT
  f\big(t\cdot\mathfrak{c}(\gamma)\big)\, \dd t
\]
for $f\in C_c(\Sigma)$---see \cite[Lemma~2.4]{CDaHGV}.

\begin{prop}\label{inverse image of groupoid with finite dad}
  Let $G$ be an \'etale groupoid and let $(\Sigma,\iota,\pi)$ be a
  twist over $G$. Suppose that there exists $M\in\NN$ such that
  $|G_u|\leq M$ for all $u\in G\z$. Then $\Sigma$ is a subhomogeneous
  groupoid with compact isotropy subgroups. Moreover, the twisted
  groupoid $\Cst$-algebra $\Cst(\Sigma;G)$ is subhomogeneous.
\end{prop}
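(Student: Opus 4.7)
The plan is to apply Theorem~\ref{subhomg-characterisation}\cref{subhomg-characterisation-2} to the groupoid $\Sigma$, and then transfer subhomogeneity to the twisted algebra via its description as a direct summand of $\Cst(\Sigma)$.

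First I would analyse the orbits and isotropy subgroups of $\Sigma$. Identifying $\Sigma\z$ with $\go$ via the embedding $\iota$, the homomorphism $\pi$ restricts to the identity on unit spaces and intertwines range and source, so the $\Sigma$-orbit of any $u\in\Sigma\z$ coincides with the $G$-orbit $[u]=r(G_{u})$, which has cardinality at most $|G_{u}|\leq M$. For the isotropy, $\Sigma(u)=\pi^{-1}(G(u))$ fits into the central extension
\[
1\to\TT\xrightarrow{\iota(u,\,\cdot\,)}\Sigma(u)\xrightarrow{\pi}G(u)\to 1.
\]
Since $G$ is \'etale, $G(u)\subseteq G_{u}$ is a discrete finite group of order at most $M$, so $\Sigma(u)$ is a finite disjoint union of cosets of the compact subgroup $\iota(\{u\}\times\TT)$ and is itself compact; this establishes the isotropy claim of the proposition.

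Second I would establish the \emph{uniform} subhomogeneity of the isotropy groups. The subgroup $\iota(\{u\}\times\TT)\leq\Sigma(u)$ is a closed abelian subgroup of finite index $|G(u)|\leq M$ in the compact group $\Sigma(u)$, hence is also open. If $\rho$ is any irreducible (hence finite-dimensional) representation of $\Sigma(u)$, then $\rho|_{\TT}$ contains some character $\chi$, and Frobenius reciprocity for compact groups exhibits $\rho$ as a subrepresentation of $\Ind_{\TT}^{\Sigma(u)}(\chi)$, a representation of dimension $[\Sigma(u):\TT]=|G(u)|\leq M$. Thus every irreducible representation of $\Sigma(u)$ has dimension at most $M$, independently of $u$. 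Combining the orbit and isotropy bounds, Theorem~\ref{subhomg-characterisation}\cref{subhomg-characterisation-2} yields that $\Sigma$ is subhomogeneous (with uniform bound $M^{2}$ via \eqref{eq:1}).

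Finally, by \cite{Brown-anHuef, IKRSW:Pushouts} the twisted algebra $\Cst(\Sigma;G)$ is a direct summand, and in particular a quotient, of $\Cst(\Sigma)$, and since subhomogeneity passes to quotients we conclude that $\Cst(\Sigma;G)$ is subhomogeneous. The main obstacle in this plan is producing the \emph{uniform}-in-$u$ dimension bound on irreducibles of $\Sigma(u)$: pointwise subhomogeneity is immediate from Proposition~\ref{subhomog groups}\cref{subhomog groups1} applied to the compact group $\Sigma(u)$, but the independence of the bound on $u$ rests on the explicit index estimate $[\Sigma(u):\TT]=|G(u)|\leq M$ forced by the hypothesis $|G_{u}|\leq M$.
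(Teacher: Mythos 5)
Your proposal is correct and follows essentially the same route as the paper: the orbits of $\Sigma$ coincide with those of $G$ and are bounded by $M$, each isotropy group $\Sigma(u)$ is a compact extension of the finite group $G(u)$ by $\TT$ in which $\iota(\{u\}\times\TT)$ is an open abelian subgroup of index at most $M$, subhomogeneity of $\Sigma$ then follows from \Cref{subhomg-characterisation}, and $\Cst(\Sigma;G)$ is subhomogeneous because it is a quotient of $\Cst(\Sigma)$. The only (cosmetic) difference is the justification of the uniform dimension bound for irreducibles of $\Sigma(u)$: you argue directly by Frobenius reciprocity for the compact group $\Sigma(u)$, whereas the paper invokes the proof of \cite[Proposition~2.1]{Moore1972}; both arguments yield the same bound $M$.
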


\begin{proof} Let $u\in G\z=\Sigma\z$. The size of the orbit
  $[u]=r(G_{u})$ is bounded by $M$. The orbits of $G$ are the same as
  the orbits of $\Sigma$, and hence the orbits of $\Sigma$ are bounded
  by $M$.

  The size of the isotropy subgroup $G(u)$ of $G$ is also bounded by
  $M$, and hence each isotropy subgroup of $\Sigma$ is an extension by
  $\TT$ of a finite group with at most $M$ elements.  We have the
  exact sequence of groups
  \[
    \begin{tikzcd}
      0\arrow[r] & \iota(\sset u \times \T) \arrow[r] & \Sigma(u)
      \arrow[r] & G(u) \arrow[r]&0.
    \end{tikzcd}
  \]
  Since $G(u)$ is discrete, $\iota(\{u\}\times\TT)=\pi\inv(\{u\})$ is
  an open abelian subgroup of $\Sigma(u)$ with finite index at most
  $M$.  Since every irreducible representation of
  $\iota(\{u\}\times\TT)$ is $1$-dimensional, it follows from the
  \emph{proof} of \cite[Proposition~2.1]{Moore1972} that every
  irreducible representation of $\Sigma(u)$ is at most
  $M$-dimensional.  Moreover, $\Sigma(u)$ is compact because $\TT$ and
  $G(u)$ are \cite[Theorem~5.25]{Hewitt-Ross-I}.

  Since $u$ was arbitrary, the orbits of $\Sigma$ are uniformly
  bounded and the isotropy subgroups of $\Sigma$ are
  $M$-subhomogeneous. Thus $\Sigma$ is subhomogeneous by
  \Cref{subhomg-characterisation}.

  Finally, since $\Cst(\Sigma;G)$ is a quotient of the subhomogeneous
  $\Cst$-algebra $\Cst(\Sigma)$, it is also subhomogeneous.
\end{proof}

We will now work to understand the ideal structure of the
$C^*$-algebras of subhomogeneous groupoids.  The following is a
version of \cite[Lemma~2.10]{Bonicke-Li-Nuclear-dim} for non-\'etale
groupoids.

\begin{prop} \label{prop-BL-2.10} Let $G$ be a locally compact and
  Hausdorff groupoid. Let $q\colon \go\to\gugo$ be the orbit map, and
  define
  \begin{align*}
    \godm n&= \set{u\in \go:\card{{[u]}}\le n},\\
    \godp n&= \set{u\in \go:\card{{[u]}}\ge n},\\
    \god n&= \set{u\in \go:\card{{[u]}}= n}.
  \end{align*}
  \begin{enumerate}
  \item\label{prop-BL-2.10-1} For $n\ge0$, $\godm n$ is a closed and
    invariant subset of $\go$.
  \item\label{prop-BL-2.10-2} For $n\ge0$, $\godp n$ is an open and
    invariant subset of $\go$.
  \item\label{prop-BL-2.10-3} For $n\ge 1$, $\god n$ is invariant and
    locally closed, and hence is locally compact in $\go$.
  \item\label{prop-BL-2.10-4} For $n\ge 1$, $q(\god n)$ is locally
    closed in $\gugo$ and $q(\god n)$ is a Hausdorff subset of $\gugo$
    in the relative topology.
  \end{enumerate}
\end{prop}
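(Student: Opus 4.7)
The plan is to handle \cref{prop-BL-2.10-1}--\cref{prop-BL-2.10-3} quickly using the openness of the source map, and then to spend the bulk of the work on \cref{prop-BL-2.10-4}. Invariance of all three sets is immediate because $\card{[u]}$ depends only on the orbit of $u$. For closedness of $\godm n$ I would argue by contradiction: suppose $u_k\to u$ with $\card{[u_k]}\le n$ but $\card{[u]}\ge n+1$, pick witnesses $\gamma_1,\dots,\gamma_{n+1}\in G$ with $s(\gamma_i)=u$ and pairwise distinct $r(\gamma_i)$, and apply \Cref{lem-seq-lift} to the open source map to lift to $\gamma_i^k\to\gamma_i$ with $s(\gamma_i^k)=u_k$. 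Hausdorffness of $\go$ then forces the $r(\gamma_i^k)$ to be distinct for large $k$, contradicting $\card{[u_k]}\le n$. Then \cref{prop-BL-2.10-2} follows from $\godp n = \go\setminus \godm{n-1}$, and \cref{prop-BL-2.10-3} is immediate because the intersection of a closed set with an open set in a locally compact Hausdorff space is locally compact.

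For \cref{prop-BL-2.10-4}, I first observe that the orbit map $q\colon\go\to\gugo$ is open: for $U\subset\go$ open, $q\inv(q(U))=r(s\inv(U))$ is open. Consequently $q(\godp n)$ is open, and since $q(\godp{n+1})$ and $q(\godm n)$ partition $\gugo$, the set $q(\godm n)$ is closed. Therefore $q(\god n)=q(\godm n)\cap q(\godp n)$ is locally closed in $\gugo$.

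The main obstacle is Hausdorffness of $q(\god n)$ in the relative topology. Given $[u]\ne[v]$ in $q(\god n)$, both orbits have exactly $n$ elements, so I can choose pairwise disjoint open sets $U^1,\dots,U^n$ in $\go$ covering the points of $[u]$ and $V^1,\dots,V^n$ covering the points of $[v]$, with $A=\bigcup_i U^i$ and $B=\bigcup_j V^j$ disjoint. The candidate separating sets are
\[
\tilde A=\set{w\in\god n : [w]\subset A}\quad\text{and}\quad \tilde B=\set{w\in\god n : [w]\subset B},
\]
which are saturated by construction and contain $u$, $v$ respectively, so $q(\tilde A)\cap q(\tilde B)=\emptyset$. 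The hard part will be verifying that $\tilde A$ (and similarly $\tilde B$) is open in $\god n$. Since $G$ is second countable and hence $\god n$ is metrisable, it suffices to show sequential openness. If $w_k\to w_0\in\tilde A$ and $[w_0]=\set{r(\gamma_1),\dots,r(\gamma_n)}$ with $s(\gamma_i)=w_0$, then \Cref{lem-seq-lift} applied to $s$ produces $\gamma_i^k\to\gamma_i$ with $s(\gamma_i^k)=w_k$, and the points $r(\gamma_i^k)$ eventually lie in the respective $U^i$ and are pairwise distinct; the equality $\card{[w_k]}=n$ then forces $[w_k]=\set{r(\gamma_1^k),\dots,r(\gamma_n^k)}\subset A$. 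This step is exactly where the upper bound in the definition of $\god n$ is essential: without $\card{[w_k]}\le n$, the lifted witnesses need not exhaust $[w_k]$, and some element of $[w_k]$ could escape $A$, so the saturation trick would fail.
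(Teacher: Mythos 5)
Your proposal is correct. For parts (a)--(c) it is essentially the paper's argument: the paper packages the same trick as a helper lemma (Lemma~\ref{helper X_n}), lifting the sequence along the open orbit map $q$ via Lemma~\ref{lem-seq-lift}, whereas you lift along the open source map $s\colon G\to\go$; that is a cosmetic difference, and both versions rely on the standing assumptions (second countability, Haar system) to get openness and to argue with sequences. Where you genuinely diverge is the Hausdorffness of $q(\god n)$. The paper shows that a sequence in $q(\god n)$ cannot converge to two distinct points $q(u)\neq q(v)$: lifting witnesses and using $\card{[u_k]}=n$ to exhaust the orbit, it forces $[u_k]$ eventually into a neighbourhood $U$ of $[u]$ disjoint from a neighbourhood $V$ of $[v]$, while a further lift makes $[u_k]$ meet $V$, a contradiction; this yields Hausdorffness because the open quotient $\gugo$, hence $q(\god n)$, is first countable. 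You instead exhibit separating sets directly: the saturated sets $\tilde A,\tilde B$ of points of $\god n$ whose entire orbit lies in $A$, respectively $B$, shown to be relatively open by the same lift-$n$-witnesses-and-exhaust-by-cardinality device. The core mechanism is identical, but your route produces explicit disjoint open neighbourhoods and avoids any (even implicit) appeal to first countability of the quotient, at the cost of a slightly longer argument. The one step you should spell out is why $q(\tilde A)$ and $q(\tilde B)$ are relatively open in $q(\god n)$: since $\god n$ and $\tilde A$ are saturated and $q$ is open (as you note, $q\inv(q(O))=r(s\inv(O))$), writing $\tilde A=\god n\cap O$ with $O$ open in $\go$ gives $q(\tilde A)=q(\god n)\cap q(O)$, which is relatively open; the same saturation argument justifies your identity $q(\god n)=q(\godm n)\cap q(\godp n)$, exactly as in the paper. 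These are routine verifications, not gaps; likewise, for $n=0$ one simply has $\godp 0=\go$.
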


To prove Proposition~\ref{prop-BL-2.10} we start with:

\begin{lemma}\label{helper X_n}
  Suppose that $u_{k}\to u_{0}$ in $\go$ with $u_{0}\in\godp n$.  Then
  $u_k\in\godp n$ eventually.
\end{lemma}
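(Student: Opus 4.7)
The plan is to lift witnesses of orbit size from the limit point back to the tail of the sequence using the Sequence Lifting Lemma (\Cref{lem-seq-lift}), then use Hausdorffness of $\go$ to keep the lifts distinct.

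First, since $u_{0}\in\godp n$, there exist $\gamma_{1},\dots,\gamma_{n}\in G$ with $s(\gamma_{i})=u_{0}$ such that the points $v_{i}\coloneqq r(\gamma_{i})$ are pairwise distinct elements of $[u_{0}]$. (One may take $\gamma_{1}=u_{0}$ and $v_{1}=u_{0}$ if convenient.)

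Second, because $G$ carries a Haar system, the source map $s\colon G\to\go$ is open, and $G$ is second countable and hence first countable. So for each $i$, \Cref{lem-seq-lift} applied to $s$ with $u_{k}\to u_{0}=s(\gamma_{i})$ produces a sequence $\gamma_{i,k}\in G$ with $s(\gamma_{i,k})=u_{k}$ and $\gamma_{i,k}\to\gamma_{i}$. By continuity of $r$, we have $r(\gamma_{i,k})\to v_{i}$ for each $i$.

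Third, since $\go$ is Hausdorff and the $v_{i}$ are pairwise distinct, there exist pairwise disjoint open neighbourhoods $W_{1},\dots,W_{n}$ of $v_{1},\dots,v_{n}$. For all sufficiently large $k$ we have $r(\gamma_{i,k})\in W_{i}$ for every $i$, and hence $r(\gamma_{1,k}),\dots,r(\gamma_{n,k})$ are $n$ distinct elements of $[u_{k}]$. This shows $u_{k}\in\godp n$ eventually. The only delicate point is the use of Hausdorffness to keep the lifted sources distinct; otherwise the argument is a direct application of the Sequence Lifting Lemma to the open map $s$.
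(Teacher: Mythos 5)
Your proof is correct and is essentially the paper's argument: the paper likewise fixes $n$ distinct points of $[u_0]$, lifts along the tail of the sequence via \Cref{lem-seq-lift}, and uses Hausdorffness of $\go$ to make the lifted points eventually distinct, forcing $\card{[u_k]}\ge n$. The only cosmetic difference is that you lift through the open source map $s\colon G\to\go$ and push forward by $r$, while the paper applies the lifting lemma to the open orbit map $q\colon\go\to\gugo$ applied to $q(u_k)$; both hinge on the same openness supplied by the Haar system.
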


\begin{proof} By assumption, we can find distinct
  $\set{u_{0}^{1},\dots,u_{0}^{n}} \subset [u_{0}]$.  Since $q$ is
  open and $q(u_{k})\to q(u_{0}^{j})$ for $1\le j\le n$,
  Lemma~\ref{lem-seq-lift} implies that there are
  $\gamma_{k}^{j}\in G$ such that
  $\gamma_{k}^{j}\cdot u_{k} \to u_{0}^{j}$ for each $j$.  Since $\go$
  is Hausdorff, the elements
  $\set{\gamma_{k}^{1} \cdot u_{k},\dots,\gamma_{k}^{n}\cdot u_{k}}$
  are eventually distinct.  Hence we eventually have
  $\card{[u_{k}]}\ge n$.  The assertion follows.
\end{proof}

\begin{proof}[Proof of Proposition~\ref{prop-BL-2.10}] The sets in
  \cref{prop-BL-2.10-1}--\cref{prop-BL-2.10-3} are clearly invariant.

  If $n=0$, then $\godm n=\emptyset$ is closed. Fix $n\geq 1$. Let
  $u_k\to u$ in $\go$ with $u_k\in \godm n$. Suppose that
  $u\notin\godm n$. Then $u\in \godp {(n+1)}$. By Lemma~\ref{helper
    X_n}, $u_k\in \godp {(n+1)}$ eventually, a contradiction. Thus
  $u\in\godm n$, and $\godm n$ is closed. This gives
  \cref{prop-BL-2.10-1}.

  For \cref{prop-BL-2.10-2}, if $n=0$ then $\godp n=\go$ is open, and
  if $n\geq 1$ then $\go\setminus \godp n=\godm {(n-1)}$ is closed.

  For \cref{prop-BL-2.10-3}, let $n\geq 1$. Then
  $\god n=\godp n\cap \godm n$ is the intersection of an open and a
  closed set, hence is locally closed.  Locally closed subsets are
  locally compact by \cite[Lemma~1.26]{Williams:Crossed}.

  For \cref{prop-BL-2.10-4} we first show that $q(\god n)$ is locally
  closed in $\gugo$. Since $\godm n$ is closed and invariant, we
  have that
  $ q^{-1}\big( q(\go\setminus q(\godm n))\big)=\go\setminus \godm
    n $ is open.  Hence $q(\godm n)$ is closed. Since the orbit map
  is open, $q(\godp n)$ is open.  Since $\godm n$ and $\godp n$ are
  invariant we have that
  \[
    q(\god n)=q\big(\godm n\cap \godp n \big)=q(\godm n)\cap q(\godp
    n )
  \]
  is the intersection of open and closed sets.  Therefore it is
  locally closed.

  To see that $q(\god n)$ is Hausdorff, suppose that $\{q(u_{k})\}$
  converges to both $q(u)$ and $q(v)$ in $q(\god n)$.  We need to see
  that $q(u)=q(v)$.  If not, then $[u]$ and $[v]$ are disjoint, finite
  sets in $\go$. Then there are disjoint open sets $U$ and $V$ in
  $\go$ such that $[u]\subset U$ and $[v]\subset V$.  Let
  $[u]=\set{u^{1},\dots,u^{n}}$.  Since $q$ is open,
  Lemma~\ref{lem-seq-lift} implies that there are
  $\gamma_{k}^{j}\in G$ such that $\gamma_{k}^{j}\cdot u_{k}\to u^{j}$
  for $1\le j\le n$.  Since $\go$ is Hausdorff, we can assume that the
  $\gamma_{k}^{j} \cdot u_{k}$ are eventually distinct and contained
  in $U$.  Since $\card{[u_{k}]}=n$ for all $k$, we eventually
  have $[u_{k}]$ in 
  $U$.  But again by Lemma~\ref{lem-seq-lift}, there are
  $\eta_{k}\in G$ such that $\eta_{k}\cdot u_{k}\to v$.  But then
  $[u_{k}]$ eventually meets $V$ which contradicts
  $U\cap V=\emptyset$.
\end{proof}

The following proposition is broadly stated in
\cite[Proposition~3.6.3]{Dixmier}; our statement is more explicit
about the homeomorphisms.  In \Cref{composition series-abelian} we
apply it to the $\Cst$-algebra of a second countable, locally compact
and Hausdorff groupoid with a Haar system and abelian isotropy
subgroups.

Let $A$ be a $\Cst$-algebra. We write $\hat A$ for the spectrum of
$A$, and for $k\in\NN$ we write $\hat A_{k}$ for the irreducible
representations $\pi\colon A\to B(H_\pi)$ with $\dim(H_\pi)=k$, and
$\Prim_k A $ for the set of $P\in\Prim A $ such that $P=\ker\pi$ for
$\pi\in \hat A_{k}$.

\begin{prop}\label{composition series}
  Let $j\in\NN$ and $M_0, M_1,\dots, M_j\in\NN$ such that
  $0= M_0<\dots<M_j$. Let $A$ be a subhomogeneous $\Cst$-algebra with
  $\hat A_M=\emptyset$ unless $M=M_n$ for some
  $n\in\{0,\dots,j\}$. For $0\leq n\leq j$, let $I_n$ be the ideal
  \[
    I_n=\bigcap\{\ker\pi\in\Prim A: \dim(\pi)\leq M_n\}
  \]
  of $A$.  Then
  \[
    \{0\}=I_j\subset\cdots\subset I_n\subset I_{n-1}\cdots \subset
    I_1\subset I_0=A
  \]
  is a composition series of ideals of $A$. For $1\leq n\leq j$, let
  $q_n\colon I_{n-1}\to I_{n-1}/I_n$ be the quotient map. Then
  \begin{enumerate}
  \item\label{composition series-1}
    $\ker\pi\mapsto \ker(\pi|_{I_{n-1}})$ is a homemomorphism of
    $\{\ker\pi\in\Prim A \colon\dim(\pi)\geq M_n\}$ onto
    $\Prim I_{n-1} $.
  \item\label{composition series-2} $I_{n-1}/ I_n$ is
    $M_n$-homogeneous, and the map
    \[Q\mapsto \overline{ Aq_n ^{-1}(Q) A}\] is a
    homeomorphism of $\Prim I_{n-1}/I_n $ onto $\Prim_{M_n}  A$.
  \end{enumerate}
  In particular, $\Prim_M A$ is locally compact and Hausdorff for
  every $M\in\NN$.
\end{prop}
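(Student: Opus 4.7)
The plan is to reduce everything to the classical Galois correspondence between closed two-sided ideals of a $\Cst$-algebra $A$ and open subsets of $\Prim A$, once I have identified each $I_n$ with the correct closed subset of $\Prim A$. The key technical input I will need is the following: for each $M\in\NN$, the set $\{P\in\Prim A:\dim(\pi_P)\le M\}$ is closed in $\Prim A$. This follows from the fact that an $M$-subhomogeneous $\Cst$-algebra is precisely one satisfying the standard polynomial identity $s_{2M}$, so this set coincides with the hull of the closed ideal of $A$ generated by $\{s_{2M}(a_1,\dots,a_{2M}):a_i\in A\}$. I expect this to be the main obstacle, but it is a classical result that I will cite rather than reprove.

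Granted this, I identify $I_n=\bigcap\{P:\dim(\pi_P)\le M_n\}$ under the Galois correspondence with the closed subset $F_n=\{P:\dim(\pi_P)\le M_n\}$ of $\Prim A$, so that the corresponding open set is $\hat I_n=\{P:I_n\not\subset P\}=\{P:\dim(\pi_P)\ge M_{n+1}\}$, using that no dimensions strictly between $M_n$ and $M_{n+1}$ occur. This immediately gives $I_0=A$ from the empty intersection, $I_j=\{0\}$ since the intersection of all primitive ideals of any $\Cst$-algebra is zero, and $I_{n-1}\supset I_n$ with strict inclusion whenever $\Prim_{M_n}A$ is non-empty. Hence the chain is a composition series as claimed.

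For part \cref{composition series-1}, I invoke the standard homeomorphism $P\mapsto P\cap J$ from $\hat J$ onto $\Prim J$ for any closed ideal $J\trianglelefteq A$; taking $J=I_{n-1}$ and noting $P\cap I_{n-1}=\ker(\pi_P\restr{I_{n-1}})$ gives the stated bijective and bicontinuous map onto $\Prim I_{n-1}$. For part \cref{composition series-2}, I compose \cref{composition series-1} with the usual bijection between $\Prim(I_{n-1}/I_n)$ and primitive ideals of $I_{n-1}$ containing $I_n$: such primitives correspond to $P\in\Prim A$ satisfying both $\dim(\pi_P)\ge M_n$ (from $\hat I_{n-1}$) and $P\supset I_n$, i.e.\ $\dim(\pi_P)\le M_n$, pinning the dimension to exactly $M_n$. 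This shows every irreducible representation of $I_{n-1}/I_n$ has dimension $M_n$, so $I_{n-1}/I_n$ is $M_n$-homogeneous, and under these identifications the map $Q\mapsto\overline{Aq_n^{-1}(Q)A}$ is precisely the inverse of $P\mapsto (P\cap I_{n-1})/I_n$ restricted to $\Prim_{M_n}A$, hence a homeomorphism onto $\Prim_{M_n}A$. Finally, $\Prim_MA$ is empty unless $M=M_n$ for some $n$, in which case it is homeomorphic to $\Prim(I_{n-1}/I_n)$; since the primitive ideal space of a homogeneous $\Cst$-algebra is Hausdorff (and primitive ideal spaces are always locally compact), the last assertion follows.
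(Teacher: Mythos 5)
Your argument is, in substance, the paper's own: the paper obtains closedness of $\{\pi\in\hat A:\dim(\pi)\le M_n\}$ from \cite[Proposition~3.6.3(i)]{Dixmier} and then runs the restriction/extension correspondences of \cite[Proposition~A.27]{tfb} at the level of spectra, while you obtain closedness from the standard-polynomial-identity characterisation of $M$-subhomogeneity and run the same correspondences at the level of $\Prim$ via hull--kernel. These are equivalent routes (note only in passing that $\dim(\pi_P)$ is well defined on $\Prim A$ because subhomogeneous algebras are CCR). Your treatment of the composition series, of part~\cref{composition series-1}, of the $M_n$-homogeneity of $I_{n-1}/I_n$, of the existence of a homeomorphism of $\Prim(I_{n-1}/I_n)$ onto $\Prim_{M_n}A$, and of the final local compactness/Hausdorffness assertion is fine.

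The one step you assert rather than prove is that ``$Q\mapsto\overline{Aq_n^{-1}(Q)A}$ is precisely the inverse of $P\mapsto (P\cap I_{n-1})/I_n$,'' and that is exactly where care is needed. Since $q_n^{-1}(Q)$ is a closed ideal of the closed ideal $I_{n-1}$, it is automatically a closed ideal of $A$, so $\overline{Aq_n^{-1}(Q)A}=q_n^{-1}(Q)=P\cap I_{n-1}$, which equals $P$ only when $P\subseteq I_{n-1}$. Concretely, for $A=\CC\oplus M_2(\CC)$ with $M_1=1$, $M_2=2$ and $n=2$, one has $I_1=\{0\}\oplus M_2(\CC)$, $I_2=\{0\}$, and the unique $Q\in\Prim(I_1/I_2)$ satisfies $q_2^{-1}(Q)=\{0\}$, so the displayed formula returns the non-primitive ideal $\{0\}$ rather than the point $\CC\oplus\{0\}$ of $\Prim_2A$. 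The genuine inverse of restriction sends $Q$ to the kernel of the canonical extension $\bar\phi$ of the irreducible representation $\phi$ of $I_{n-1}$ with $\ker\phi=q_n^{-1}(Q)$, equivalently to $\set{a\in A: aI_{n-1}\subseteq q_n^{-1}(Q)}$. To be fair, the paper's own proof constructs the homeomorphism as $\tilde\phi\mapsto\bar\phi$ and likewise never verifies the displayed formula, so on this point your attempt and the paper are in the same position; but your ``is precisely'' claim is the one step that would fail if pressed, and the formula should be corrected (or argued for) rather than treated as immediate.
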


\begin{proof} Since homogeneous \cs-algebras are GCR, we identify
  their primitive ideal spaces and spectra.  By
  \cite[Proposition~3.6.3(i)]{Dixmier}, the set
  \begin{equation}\label{set F}
    F_n\coloneqq\{ \pi\in \hat A: \dim(\pi)\leq M_n\}
  \end{equation}
  is closed.  That $\Prim I_{n-1}/I_n $ is homeomorphic to
  $\Prim_{M_n} A $ is stated in \cite[Proposition~3.6.3(ii)]{Dixmier},
  but the formula for the homemomorphism is not part of the statement
  nor the proof.

  Let $\rho$ be an irreducible representation of $I_{n-1}/I_n$ and let
  $q_n\colon I_{n-1}\to I_{n-1}/ I_n$ be the quotient map.  Then
  $\rho=\tilde\phi$ where $\phi$ is an irreducible representation of
  $I_{n-1}$ such that $\phi=\tilde\phi\circ q_n$.  Indeed, by, for
  example, \cite[Proposition~A.27]{tfb},
  \[
    \Psi_1^{n-1}\colon \left(I_{n-1}/I_n\right)^\wedge \to \{\phi\in
    (I_{n-1})^{\wedge}\colon \phi|_{I_n}=0\}, \quad \tilde\phi\mapsto
    \phi
  \]
  is a homeomorphism.  We have $\dim(\tilde\phi)=\dim(\phi)$, and we
  claim that $\dim(\phi)=M_n$.

  By \cite[Proposition~A27]{tfb} again,
  \[
    \Psi_2^{n-1}\colon \set{\pi\in \hat A\colon \pi|_{I_{n-1}}\neq
      0}\to (I_{n-1})^{\wedge},\quad \pi\mapsto \pi|_{I_{n-1}}
  \]
  is a homeomorphism.  Here we have that
  \begin{align*}
    \set{\pi\in \hat A\colon \pi|_{I_{n-1}}\neq 0}
    &=\set{\pi\in \hat A\colon\dim(\pi)> M_{n-1}} 
      =\set{\pi\in \hat A\colon\dim(\pi)\geq M_{n}}.
  \end{align*}
  This establishes \cref{composition series-1}.

  Further, $\phi=\pi|_{I_{n-1}}$ where $\pi$ has dimension at least
  $M_n$, and since $I_{n-1}$ is an ideal we have $\dim(\phi)\geq M_n$
  as well.  But $I_n\subset\ker(\phi)$ gives $\dim(\phi)\leq M_n$
  because the set $F_n$ in Equation~\ref{set F} is closed. Thus
  $\dim(\phi)=M_n$ as claimed in the first part of \cref{composition
    series-2}.

  The inverse of $\Psi_2^{n-1}$ sends
  $\phi\colon I_{n-1}\to B(H_\phi)$ to the unique extension
  $\bar\phi\colon A\to B(H_\phi)$ such that
  $\phi(ai)=\bar\phi(a)\phi(i)$ for $a\in A$ and $i\in I_{n-1}$. Thus
  $\dim(\bar\phi)=\dim(\phi)=M_n$.  Now
  $(\Psi_2^{n-1})^{-1}\circ\Psi^{n-1}_1$ maps
  $\tilde\phi\mapsto\bar\phi$ and has range contained in $\Prim_{M_n} A $.
  To see $(\Psi_2^{n-1})^{-1}\circ\Psi^{n-1}_1$ is onto
  $\Prim_{M_n} A $, let $\pi\in \Prim_{M_n} A $, then $\pi|_{I_{n-1}}$
  has dimension $M_n$. Thus $\pi|_{I_{n}}=0$ and hence
  $\pi|_{I_{n-1}}$ factors through the quotient. Thus
  $(\Psi_2^{n-1})^{-1}\circ
  \Psi^{n-1}_1\big(\widetilde{\pi|_{I_{n-1}}}\big)=\pi$. This gives
  \cref{composition series-2}.

  Homogeneous $\Cst$-algebras have Hausdorff spectrum, giving the last
  statement.
\end{proof}

Now we apply \Cref{composition series} to $A=\Cst(G)$.  If
$M_n\in\NN$, then $G\z_{M_n}$ is locally closed by
\Cref{prop-BL-2.10}, and hence $G|_{G\z_{M_n}}$ is locally
compact. Thus writing $\Cst(G|_{G\z_{M_n}})$ in \Cref{composition
  series-abelian} below makes sense.

\begin{cor}\label{composition series-abelian} Let $j\in\NN$ and $M_0,
  M_1,\dots, M_j\in\NN$ such that $0= M_0<\dots<M_j$.  Let $G$ be a
  locally compact, Hausdorff and subhomogeneous groupoid with abelian
  isotropy subgroups.  Suppose that $\Prim_M\Cst(G)=\emptyset$ unless
  $M=M_n$ for some $n\in\{0,\dots,j\}$. Let $n\in \{0, \dots, j\}$ and
  let $I_n$ be the ideal
  \[
    I_n=\bigcap\set{\ker\pi\in\Prim\Cst(G)\colon \dim(\pi)\leq M_n}
  \]
  of $\Cst(G)$. Then $I_{n-1}$ is isomorphic to
  $\Cst(G|_{\godp{M_n}})$ and the quotient $I_{n-1}/I_n$ is isomorphic
  to $\Cst(G|_{\god {M_n}})$ for $1\leq n\leq j$.  Further,
  $\Prim\big( \Cst(G|_{\god{M_n}}\big)$ is homemomorphic to
  $\Prim_{M_n} \Cst(G)$.
\end{cor}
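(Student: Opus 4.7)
The plan is to apply Proposition~\ref{composition series} to $A=\Cst(G)$ after identifying $I_{n-1}$ with $\Cst(G|_{\godp{M_n}})$ and the subquotient $I_{n-1}/I_n$ with $\Cst(G|_{\god{M_n}})$. The starting observation is that, since $G$ is subhomogeneous, Theorem~\ref{subhomg-characterisation} tells us $G$ is amenable and its orbits are closed, so every irreducible representation of $\Cst(G)$ has the form $\Ind_{G(u)}^{G}(\rho)$ for some $u\in\go$ and some character $\rho$ of the abelian group $G(u)$, with $\dim\Ind_{G(u)}^{G}(\rho)=|[u]|$. Combined with the hypothesis that $\Prim_M\Cst(G)$ is empty unless $M\in\{M_1,\dots,M_j\}$, this forces $|[u]|\in\{M_1,\dots,M_j\}$ for every $u\in\go$; in particular $\go\setminus\godp{M_n}=\godm{M_{n-1}}$ and, with the convention $\godp{M_{j+1}}\coloneqq\emptyset$, $\godp{M_n}\setminus\godp{M_{n+1}}=\god{M_n}$.

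Next I would invoke the standard ideal/quotient dictionary: for any open invariant subset $U$ of $\go$, $\Cst(G|_U)$ embeds as an ideal of $\Cst(G)$ with quotient $\Cst(G|_{\go\setminus U})$ (see, for example, \cite[Chapter~9]{Williams:groupoid}), and a primitive ideal $\ker\Ind_{G(u)}^{G}(\rho)$ contains $\Cst(G|_U)$ if and only if $u\in\go\setminus U$---equivalently, via the quasi-orbit map $p$ of \Cref{def-quasi-orbit-map}, the corresponding quasi-orbit lies outside $q(U)$. Applied to $U=\godp{M_n}$, which is open and invariant by Proposition~\ref{prop-BL-2.10}\cref{prop-BL-2.10-2}, this shows that the primitive ideals containing $\Cst(G|_{\godp{M_n}})$ are exactly those induced from $u\in\godm{M_{n-1}}$, that is, those of dimension at most $M_{n-1}$. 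Since every closed two-sided ideal in a $\Cst$-algebra is the intersection of the primitive ideals containing it, this yields $\Cst(G|_{\godp{M_n}})=I_{n-1}$.

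Applying the same short exact sequence to the restricted groupoid $G|_{\godp{M_n}}$ and its open invariant subset $\godp{M_{n+1}}$, the relative complement is $\god{M_n}$, yielding $I_{n-1}/I_n\cong \Cst(G|_{\godp{M_n}})/\Cst(G|_{\godp{M_{n+1}}})\cong \Cst(G|_{\god{M_n}})$. The homeomorphism $\Prim\Cst(G|_{\god{M_n}})\cong \Prim_{M_n}\Cst(G)$ then follows by composing this isomorphism with Proposition~\ref{composition series}\cref{composition series-2}. The main obstacle is the bookkeeping in the first two paragraphs: because the isotropy is abelian and the permitted dimensions are exactly the $M_k$, the orbit sizes coincide with the permitted dimensions, so that the open sets $\godp{M_n}$ align precisely with the breaks in the composition series of Proposition~\ref{composition series}; any slack in this alignment would spoil the identification of $I_{n-1}$ with $\Cst(G|_{\godp{M_n}})$.
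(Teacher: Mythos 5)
Your proposal is correct, and its skeleton is close to the paper's: both arguments rest on the dimension formula of \Cref{subhomg-characterisation}, the openness and invariance of $\godp{M_n}$ from \Cref{prop-BL-2.10}, the standard identification of $\Cst(G|_U)$ with an ideal of $\Cst(G)$ whose quotient is $\Cst(G|_{\go\setminus U})$ for $U$ open and invariant, and \Cref{composition series}\cref{composition series-2} for the homeomorphism with $\Prim_{M_n}\Cst(G)$. Where you genuinely diverge is the key step identifying $\Cst(G|_{\godp{M_n}})$ with $I_{n-1}$: the paper first shows by a direct convolution computation that every representation induced from $u\in\godm{M_{n-1}}$ annihilates $C_c(G|_{\godp{M_n}})$, and then obtains equality by matching spectra, using that induction gives a bijection of $\Cst(G|_{\godp{M_n}})^{\wedge}$ onto $\set{\pi\in\Cst(G)^{\wedge}:\dim\pi\ge M_n}$, which is $(I_{n-1})^{\wedge}$ by \Cref{composition series}\cref{composition series-1}. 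You instead determine which primitive ideals of $\Cst(G)$ contain the ideal $\Cst(G|_{\godp{M_n}})$ and invoke the fact that every closed two-sided ideal is the intersection of the primitive ideals containing it; this is slicker and avoids the spectral bijection, but the ``only if'' half of your dictionary (that $\ker\Ind_{G(u)}^{G}(\rho)$ does not contain the ideal when $u\in\godp{M_n}$) is not purely formal in the non-\'etale setting: it needs either the support results used in \Cref{lem-irrep} (\cite[Lemma~5.25 and Corollary~5.28]{Williams:groupoid}) or the observation that a primitive ideal containing the kernel of $\Cst(G)\to\Cst(G|_{\godm{M_{n-1}}})$ is pulled back, via \cite[Corollary~5.29]{Williams:groupoid}, from a representation of dimension at most $M_{n-1}$, which cannot coincide with one of dimension at least $M_n$; also the ideal/quotient facts you quote are \cite[Proposition~5.1]{Williams:groupoid} and its companions rather than Chapter~9. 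A genuine merit of your write-up is that you make explicit a point the paper leaves implicit: the hypothesis on $\Prim_M\Cst(G)$ forces every orbit size to lie in $\{M_1,\dots,M_j\}$ (via irreducibility of representations induced from characters of the abelian isotropy, as in \cite{Ionescu-Williams:irrep} and \Cref{lem-irrep}), which is exactly what guarantees $\go\setminus\godp{M_n}=\godm{M_{n-1}}$ and $\godp{M_n}\setminus\godp{M_{n+1}}=\god{M_n}$, so that the open sets $\godp{M_n}$ line up with the breaks in the composition series.
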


\begin{proof}
  Since $G$ is subhomogeneous, and hence CCR,
  every irreducible representation of
  $\Cst(G)$ is of the form $\Ind_{G(u)}^G(\sigma)$ for some $u\in\go$
  and $\sigma\in G(u)^{\wedge}$.  Since the isotropy is abelian,
  $\dim(\sigma)=1$ and
  \Cref{subhomg-characterisation}\cref{subhomg-characterisation-1}
  implies that $\dim\big(\Ind_{G(u)}^G(\sigma)\big)\leq M_{n-1}$ if
  and only if $u\in \godm{M_{(n-1)}}$.

  By \Cref{prop-BL-2.10}, $\godp{M_n}$ is open and invariant. Thus
  $H_n\coloneqq G|_{\godp{M_n}}$ is an open subgroupoid of $G$, and
  inclusion and extension by $0$ extends to an isometric
  homomorphism \[\iota\colon \Cst(H_n)\to \Cst(G).\] Since
  $\godp{M_n}$ is an invariant subset of the unit space, the range of
  $\iota$ is an ideal of $\Cst(G)$.
 
  To see that $\iota$ has range $I_{n-1}$, fix
  $u\in \godm{M_{(n-1)}}$, $\sigma\in G(u)^{\wedge}$, and
  $f\in C_c(H_n)$. For $\xi\otimes h\in C_c(G_{u})\otimes H_\sigma$ we
  have
  \[\Ind_{G(u)}^G(\sigma)(\iota(f))(\xi\otimes h)=\iota(f)*\xi\otimes
    h.\] Here
  \[
    (\iota(f)*\xi)(\gamma)=\int_G
    \iota(f)(\alpha)\xi(\alpha^{-1}\gamma)\,
    \dd\lambda^{r(\gamma)}(\alpha)=0
  \]
  because in the integrand $s(\gamma)=u\in \godm{M_{(n-1)}}$ implies
  $\alpha\in \godm{M_{(n-1)}}$, and then $\iota(f)(\alpha)=0$.  Thus,
  by definition of $I_{n-1}$, $\range\iota$ is an ideal contained in
  $I_{n-1}$.
    
  To see that $\iota$ is surjective, notice that
  since the orbits in $H_n$ are closed, every primitive ideal of
  $\Cst(H_n)$ is induced. Moreover, for $u\in \godp{M_n}$, the orbits
  and isotropy subgroups in $H_n$ and $G$ coincide.  Thus
  $\Ind_{H_n(u)}^{H_n}(\sigma)\mapsto \Ind_{G(u)}^{G}(\sigma)$ is a
  bijection of $\Cst(H_n)^\wedge$ onto
  $\{\pi\in\Cst(G)^\wedge\colon \dim(\pi)\geq M_n\}$ which is
  homeomorphic to $(I_{n-1})^\wedge$ by \Cref{composition series}. It
  follows that $\Cst(H_n)$ is isomorphic to $I_{n-1}$.

  Now $I_n$ is isomorphic to $\Cst(H_{n+1})$; since
  $\godp{M_{(n+1)}}$ is an open invariant subset of $\godp{M_{n}}$,
  the statement about the quotient is immediate from, for example,
  \cite[Proposition~5.1]{Williams:groupoid}.  The assertion about
  primitive ideal spaces follows from part~(b) of
  Proposition~\ref{composition series}.
\end{proof}

\begin{remark} There is no staightforward analogue of  \Cref{subhomg-characterisation} when the isotropy subgroups are not abelian. To see this,  let $G=[-1,1]\times S_3$ be the transformation-group groupoid of \Cref{ex S3 acting}. The isotropy subgroup at $0$ is all of $S_3$.  The ideal
\[
I_1=\cap\{\ker\pi\in\Prim\Cst(G) :\dim\pi\leq 1\}
\]
has spectrum
\[
\widehat{I_1}=\{ \Ind_{G(u)}^G(\sigma):u\neq 0, \sigma\in \widehat{A_3} \}\cup\{ \Ind_{G(0)}^G(Q)\}.
\]
If $I_1$ were of the form $\Cst(G|_U)$ for an open, invariant subset $U$ of $\go$, then $U$ would have to be $[-1,1]$, which would imply $I_1=\Cst(G)$. 
\end{remark}

When $G$ is subhomogeneous, all orbits are closed. Then the
quasi-orbit map from \Cref{def-quasi-orbit-map} takes values in
$\gugo$, and $p(\Ind_{G(u)}^G(\sigma))=G\cdot u$.  Hence \Cref{prim M
  n}\cref{prim M n item 2} is a non-\'etale version of
\cite[Proposition~2.11]{Bonicke-Li-Nuclear-dim}.

\begin{prop}\label{prim M n}
  Let $G$ be a locally compact, Hausdorff groupoid that is
  subhomogeneous, and let $n$ and $M$ be positive integers such that
  $n\mid M$. Let \[p_M\colon \Prim_M\Cst(G)\to \gugo\] be the
  restriction of the quasi-orbit map.
  \begin{enumerate}
  \item\label{prim M n item 1} Then $p_M^{-1}(G\backslash G\z_{n})$ is
    locally compact and Hausdorff.
  \item\label{prim M n item 2} Fix $u\in G\z_{n}$. Then
    $\Ind_{G(u)}^G(\pi) \mapsto \pi$ is a homemorphism of
    $p_M^{-1}(G\cdot u)$ onto $\Prim_{M/n}\Cst(G(u))$.
  \end{enumerate}
\end{prop}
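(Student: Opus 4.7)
\smallskip
\noindent\textbf{Proof plan.} For part~\partref{1}, my plan is to combine the local compactness and Hausdorffness of $\Prim_M\Cst(G)$ (the last sentence of \Cref{composition series}) with the local-closedness of $G\backslash G\z_n=q(\god n)$ in $\gugo$ (\Cref{prop-BL-2.10}\cref{prop-BL-2.10-4}). Because $G$ is subhomogeneous, \Cref{subhomg-characterisation}\cref{subhomg-characterisation-2} shows that every orbit is finite, hence closed in $\go$, so $\gugo$ is $T_1$ and the quasi-orbit map lands in $\gugo$ itself. Continuity of the quasi-orbit map (\Cref{qorbit-cts}) will then make $p_M^{-1}(G\backslash G\z_n)$ locally closed in the locally compact Hausdorff space $\Prim_M\Cst(G)$, hence locally compact Hausdorff in the relative topology.

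For part~\partref{2}, I plan to factor the candidate map $\Indx u(\pi)\mapsto \pi$ through a subquotient of $\Cst(G)$ supported on the single finite closed orbit $[u]$. Since $[u]$ is a closed invariant subset of $\go$, $\Cst(G|_{\go\setminus[u]})$ is an ideal in $\Cst(G)$ with quotient $\Cst(G|_{[u]})$; because $[u]$ consists of a single orbit, its only invariant subsets are $\emptyset$ and $[u]$, so the primitive ideals of $\Cst(G)$ whose quasi-orbit is $G\cdot u$ correspond exactly to the primitive ideals of $\Cst(G|_{[u]})$. Combined with \Cref{lem-irrep} and a dimension count via \Cref{subhomg-characterisation}\cref{subhomg-characterisation-1}, this identifies $p_M^{-1}(G\cdot u)$ homeomorphically with $\Prim_M\Cst(G|_{[u]})$.

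Next I would invoke the classical equivalence theorem for transitive groupoids: since $G|_{[u]}$ is transitive with finite unit space of cardinality $n$, it is Morita equivalent to the isotropy group $G(u)$, with concretely $\Cst(G|_{[u]})\cong M_n(\Cst(G(u)))$. The Rieffel correspondence will then supply a homeomorphism of primitive ideal spaces under which irreducible-representation dimensions scale by $n$, yielding $\Prim_M\Cst(G|_{[u]})\cong \Prim_{M/n}\Cst(G(u))$. Composing the two homeomorphisms produces the desired homeomorphism between $p_M^{-1}(G\cdot u)$ and $\Prim_{M/n}\Cst(G(u))$.

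The hardest step will be verifying that this composite homeomorphism is really the map $\Indx u(\pi)\mapsto \pi$. I plan to apply induction in stages, writing $\Indx u(\pi)$ as induction of $\pi$ from $G(u)$ up to the transitive subgroupoid $G|_{[u]}$ via the natural imprimitivity bimodule, followed by pullback along the quotient $\Cst(G)\twoheadrightarrow \Cst(G|_{[u]})$. The underlying facts are classical, but the bookkeeping that links the matrix-algebra picture of $\Cst(G|_{[u]})$ with the groupoid-induced representation $\Indx u(\pi)$ will require some care.
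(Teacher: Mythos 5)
Your proposal is correct and follows essentially the same route as the paper: part~(a) via continuity of the quasi-orbit map plus \Cref{prop-BL-2.10} and \Cref{composition series}, and part~(b) by passing to the quotient $\Cst(G|_{[u]})$ over the closed orbit and then applying the groupoid equivalence theorem with the Rieffel homeomorphism, checking compatibility with $\Ind_{G(u)}^{G}$. The only cosmetic difference is that you make the identification $\Cst(G|_{[u]})\cong M_n(\Cst(G(u)))$ explicit to track dimensions, whereas the paper works directly with the imprimitivity bimodule and reads off dimensions from \Cref{subhomg-characterisation}\cref{subhomg-characterisation-1}.
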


\begin{proof} By \Cref{prop-BL-2.10}, $G\backslash G\z_{n}$ is locally
  closed in $\gugo$. Since $p_M$ is continuous by \Cref{qorbit-cts}, $p_M^{-1}(G\backslash G\z_{n})$ is a locally
  closed subset of a locally compact space, hence is locally
  compact. Since $\Prim_M\Cst(G)$ is Hausdorff by \Cref{composition
    series}, so is the subset $p_M^{-1}(G\z_{n}/G)$. This gives
  \cref{prim M n item 1}.

  Since $G$ is subhomogeneous, the orbit $[u]$ is closed in $G\z$. Let
  $q\colon \Cst(G)\to \Cst(G|_{[u]})$ be the quotient map.  Since
  $u\in G\z_n$, by \Cref{subhomg-characterisation} every element of
  $p_M^{-1}(G\cdot u)$ is of the form $\Ind_{G(u)}^G(\pi)$ where
  $\pi\in \Cst(G(u))^\wedge$ has dimension $M/n$. Further,
  $\Ind_{G(u)}^G(\pi)$ factors through the representation
  $\Ind_{G(u)}^{G|_{[u]}}(\pi)$ of the quotient $\Cst(G|_{[u]})$ of
  $\Cst(G)$, that is,
  $\Ind_{G(u)}^G(\pi)=\Ind_{G(u)}^{G|_{[u]}}(\pi)\circ q$---see 
  \cite[Corollary~5.29]{Williams:groupoid}. In particular,
  \[p_M^{-1}(\{G\cdot u\})=\{P\in\Prim\Cst(G)\colon \ker q\subset
    P\},\] and is homeomorphic to $\Prim\Cst(G|_{[u]})$.  By the
  Equivalence Theorem
  \cite[Theorem~3.1]{Muhly-Renault-Williams-Equivalence}, $\Cst(G(u))$
  and $\Cst(G|_{[u]})$ are Morita equivalent.  Let $Y$ be the
  associated $\Cst(G(u))$--$\Cst(G|_{[u]})$ imprimitivity bimodule and
  \[\ibind Y\colon\Prim\Cst(G|_{[u]})\to \Prim\Cst(G(u))\] the
  Rieffel homeomorphism.  Then the composition gives
  $\ibind Y(\Ind_{G(u)}^{G|_{[u]}}(\pi))=\pi$, as needed.  This gives
  \cref{prim M n item 2}.
\end{proof}

We are now ready to discuss the nuclear dimension of $\Cst$-algebras
of subhomogeneous groupoids. We start by recalling the definitions of
topological and nuclear dimension.

Let $X$ be a topological space.  An open cover of $X$ has order $m$ if
$m$ is the least integer such that each element of $X$ belongs to at
most $m$ elements of the cover. The \emph{topological covering
  dimension} of $X$, written $\dim(X)$, is the smallest integer~$N$
such that every open cover of $X$ admits an open refinement of order
$N+1$ (see, for example, \cite[\S1.1]{Coornaert} or
  \cite[\S1.6]{Engelking}).

Let $A$ be a $\Cst$-algebra and let $d\in \NN$. Then $A$ has
\emph{nuclear dimension} at most $d$, as defined in
\cite[Definition~2.1]{WinterZacharias:nuclear} and written
$\dim_\nuc(A)\leq d$, if for any finite subset $\Ff\subset A$ and
$\epsilon>0$, there exist a finite-dimensional $\Cst$-algebra
$F=\bigoplus_{i=0}^d F_i$, a completely positive, contractive map
$\psi\colon A\to F$ and a completely positive map $\phi\colon F\to A$
such that $\phi|_{F_i}$ is contractive and order zero for
$0\leq i\leq d$, and for all $a\in \Ff$,
\[\|\phi(\psi(a))-a\|<\epsilon.\]

The following is a non-\'etale version of
\cite[Theorem~2.12]{Bonicke-Li-Nuclear-dim}.  To make the formulas
look nicer, we write $\dim\plusone_\nuc(A)$ for $\dim_\nuc(A)+1$ and
$\dim\plusone(X)$ for $\dim(X)+1$.

\begin{thm}\label{nuclear-dim-for-subhomog}
  Let $G$ be a locally compact, Hausdorff and subhomogeneous
  group\-oid. Then
  \[
    \dim_\nuc\plusone(\Cst(G))\leq \dim\plusone(G\z)\sup_{u\in
      G\z}\dim\plusone(\Prim\Cst(G(u))).
  \]
\end{thm}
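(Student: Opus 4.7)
The plan is to apply the composition series for subhomogeneous $\Cst$-algebras from \Cref{composition series} to $A = \Cst(G)$ and then bound the topological dimension of each homogeneous subquotient's spectrum via a fiber-dimension analysis based on the quasi-orbit map. First, \Cref{subhomg-characterisation}\cref{subhomg-characterisation-3} ensures that $G$ is amenable, so the quasi-orbit map $p$ is continuous (\Cref{qorbit-cts}) and open (\Cref{qorbit-open}). Applying \Cref{composition series} to $\Cst(G)$ produces a finite composition series $0 = I_j \subset \cdots \subset I_0 = \Cst(G)$ whose subquotient $I_{n-1}/I_n$ is $M_n$-homogeneous with locally compact Hausdorff spectrum $\Prim_{M_n}\Cst(G)$. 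Since an $M$-homogeneous $\Cst$-algebra has nuclear dimension equal to the topological dimension of its spectrum, and Winter's theorem for subhomogeneous $\Cst$-algebras gives $\dim_\nuc(A) \leq \max_n \dim(\Prim_{M_n}\Cst(G))$, the task reduces to bounding $\dim(\Prim_M \Cst(G))$ for each $M$ with $\Prim_M\Cst(G)$ nonempty.

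Set $K := \sup_{u \in G\z} \dim\plusone(\Prim\Cst(G(u)))$ and fix such an $M$. By \Cref{prim M n}, $\Prim_M \Cst(G)$ decomposes as a finite disjoint union $\bigsqcup_{n \mid M} p_M^{-1}(q(\god{n}))$ of locally closed Hausdorff strata, where $q$ denotes the orbit map $G\z \to \gugo$. Each stratum is fibered by the (continuous, open) quasi-orbit map over $q(\god{n})$, with fiber $\Prim_{M/n}\Cst(G(u)) \subseteq \Prim\Cst(G(u))$ of topological dimension at most $K - 1$. A Hurewicz-type fiber-dimension theorem for open surjections then gives $\dim\bigl( p_M^{-1}(q(\god{n})) \bigr) \leq \dim(q(\god{n})) + K - 1$. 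Since $q\colon \god{n} \to q(\god{n})$ is an $n$-to-one open surjection between locally compact Hausdorff spaces, $\dim(q(\god{n})) \leq \dim(\god{n}) \leq \dim(G\z)$ by monotonicity of covering dimension for locally closed subsets of the metrisable space $G\z$. Standard dimension-theoretic facts about finite unions of locally closed subsets of LCH spaces then yield $\dim(\Prim_M \Cst(G)) \leq \dim(G\z) + K - 1$.

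Combining the above,
\[
    \dim\plusone_\nuc(\Cst(G)) \leq \dim(G\z) + K = \dim\plusone(G\z) + K - 1 \leq \dim\plusone(G\z) \cdot K,
\]
where the last inequality is the elementary $(a-1)(b-1) \geq 0$ applied to $a = \dim\plusone(G\z)$ and $b = K$, both at least $1$. The main obstacle is the invocation of Winter's theorem that the nuclear dimension of a subhomogeneous $\Cst$-algebra is controlled by the maximum of the topological dimensions of the strata of its primitive ideal space; without this sharp max-type bound, iterating the standard sub-additivity $\dim\plusone_\nuc(A) \leq \dim\plusone_\nuc(I) + \dim\plusone_\nuc(A/I)$ across the composition series contributes one summand per subquotient and is too weak to yield the claimed product formula. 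A secondary technical point is the careful handling of topological dimension of the finite disjoint union of locally closed strata, which must be combined with openness of the restricted quasi-orbit map to apply the fiber-dimension theorem.
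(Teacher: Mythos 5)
Your proof follows the paper's skeleton up to the decisive step: like the paper, you use Winter's formula $\dim_\nuc(\Cst(G))=\max_M\dim(\Prim_M\Cst(G))$, the stratification $\Prim_M\Cst(G)=\bigsqcup_{n\mid M}p_M^{-1}(G\backslash \god n)$, the identification of the fibres with $\Prim_{M/n}\Cst(G(u))$ from \Cref{prim M n}, the comparison $\dim(G\backslash\god n)=\dim(\god n)\le\dim(G\z)$, and a finite-union argument. The gap is in the one step that does the real work, the bound on $\dim\bigl(p_M^{-1}(G\backslash\god n)\bigr)$, which you obtain from ``a Hurewicz-type fiber-dimension theorem for open surjections.'' Two problems. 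First, the Hurewicz inequality $\dim X\le\dim Y+\sup_y\dim f^{-1}(y)$ is a theorem about \emph{closed} maps between separable metric spaces; for open maps it is not available in this generality (extra hypotheses such as compactness of the fibres are needed), and here the fibres $\Prim_{M/n}\Cst(G(u))$ are only locally compact Hausdorff. Second, the openness you rely on is not justified: \Cref{qorbit-open} makes the quasi-orbit map $p$ open on all of $\Prim\Cst(G)$, but $\Prim_M\Cst(G)$ is \emph{not} a union of fibres of $p$ (primitive ideals of different dimensions can lie over the same quasi-orbit, cf.\ \Cref{ex S3 acting}), so the restriction $p_M$ of an open map to this non-saturated subset need not be open. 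Only continuity of $p_M$ is established (\Cref{qorbit-cts}), and indeed the paper's proof never uses \Cref{qorbit-open} at all.

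The paper sidesteps both issues by arguing $\Cst$-algebraically rather than topologically: since $p_M\colon p_M^{-1}(G\backslash\god n)\to G\backslash\god n$ is merely a \emph{continuous} map of locally compact Hausdorff spaces, $C_0\bigl(p_M^{-1}(G\backslash\god n)\bigr)$ becomes a $C_0(G\backslash\god n)$-algebra with fibres $C_0\bigl(p_M^{-1}(G\cdot u)\bigr)$, and \cite[Lemma~3.3]{HW:2017:NDim-homeo} then gives directly the multiplicative bound
\[
  \dim\plusone\bigl(p_M^{-1}(G\backslash\god n)\bigr)\le
  \dim\plusone(G\backslash\god n)\,\sup_{u\in\god n}\dim\plusone\bigl(\Prim_{M/n}\Cst(G(u))\bigr),
\]
which is all the theorem needs. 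Your proposed additive bound would in fact be sharper than the stated estimate, but as written it rests on an openness claim that is unproved and on a fibre-dimension theorem that does not apply to the maps at hand; to repair the argument along your lines you would have to either verify openness of $p_M$ together with hypotheses under which an open-map Hurewicz theorem holds, or simply replace that step by the $C_0(X)$-algebra estimate as in the paper.
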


\begin{proof} Since $\Cst(G)$ is separable and subhomogeneous, by
  \cite[\S1.6]{Winter:decomposition-rank} we have
  \[
    \dim_\nuc(\Cst(G))= \max_M\{\dim(\Prim_M\Cst(G))\}.
  \]
  Let $M\in\NN$ such that $\Prim_M\Cst(G)\neq \emptyset$.  Write
  $p_{M}$ for $p$ restricted to $\Prim_M\Cst(G)$. Then
  \[
    \Prim_{M}\Cst(G)=\bigsqcup_{n\mid M}p\inv_M(G\backslash G\z_n),
  \]
  and hence
  \[
    \dim \big(\Prim_{M}\Cst(G)\big)=\max_{n\mid
      M}\dim(p_{M}\inv(G\backslash G\z_n)).
  \]
  By \Cref{prop-BL-2.10} and \Cref{prim M n}, respectively, each
  $G\backslash G\z_n$ and $p_{M}\inv(G\backslash G\z_n)$ is locally
  compact and Hausdorff.  In particular,
  $p_{M}\colon p_{M}^{-1}( G\backslash G\z_n)\to G\backslash G\z_n$ is
  a continuous map between locally compact, Hausdorff spaces. Thus we
  can view $C_0(p_M^{-1}(G\backslash G\z_n))$ as a
  $C_0(G\backslash G\z_n)$-algebra as in
  \cite[Proposition~5.37]{Williams:groupoid}.  Now apply
  \cite[Lemma~3.3]{HW:2017:NDim-homeo} to get
  \begin{align*}
    \dim\plusone(p_{M}^{-1}(G\backslash G\z_n))
    &=\dim_\nuc\plusone(C_0(p_{M}^{-1}(G\backslash G\z_n)))\\
    &\leq \dim\plusone(G\backslash G\z_n)\sup_{G\cdot u\in
      G\backslash G\z_n}\dim_\nuc\plusone (C_0(p_{M}^{-1}(G\cdot u)))\\ 
    &=\dim\plusone(G\backslash G\z_n)\sup_{u\in G\z_n}\dim\plusone
      (p_{M}^{-1}(G\cdot x))\\ 
    &=\dim\plusone(G\backslash G\z_n) \sup_{u\in G\z_n}\dim\plusone
      \big(\Prim_{M/n}\Cst(G(u))\big) 
  \end{align*}
  using \Cref{prim M n} at the last step.  Since $\Cst(G(u))$ is
  separable and subhomogeneous, by
  \cite[\S1.6]{Winter:decomposition-rank} we have
  \[\dim \big(\Prim_{M/n}\Cst(G(u))\big)\leq \dim
    \big(\Prim\Cst(G(u))\big).\] The restricted orbit map
  $q\colon G\z_n\to G\backslash G\z_n$ is an open, surjective map
  between spaces that are second countable, locally compact and
  Hausdorff (hence metrisable); since $q^{-1}(q(x))$ is finite for all
  $x\in G\z_n$, it follows from \cite[Theorems~1.12.7
  and~1.7.7]{Engelking} that $\dim(G\z_n)=\dim(G\backslash G\z_n)$.
  Finally, since $G\z_n$ is a subset of the metrisable space $G\z$, we
  get that \[\dim(G\backslash G\z_n)=\dim(G\z_n)\leq\dim(G\z).\] Thus,
  for each $M$,
  \[
    \dim\plusone(p_{M}^{-1}(G\backslash G\z_n))\leq
    \dim\plusone(G\z)\sup_{u\in G\z}\dim\plusone
    \big(\Prim\Cst(G(u))\big),
  \]
  and the theorem follows.
\end{proof}

We can say more when the isotropy subgroups are abelian or compact
(\Cref{nuclear-dim-for-subhomog-abelian-or-compact-isotropy}) and for
non-\'etale extension of certain \'etale subhomogeneous groupoids
(\Cref{cor-nuc-dim-twist}).

\goodbreak
\begin{cor}\label{nuclear-dim-for-subhomog-abelian-or-compact-isotropy}
  Let $G$ be a locally compact, Hausdorff, subhomogeneous group\-oid.
  \begin{enumerate}
  \item Suppose that the isotropy subgroups are isomorphic and
    homeomorphic to a subgroup of an abelian group $S$.  Write
    $\hat S$ for the dual group of $S$.  Then
    \[
      \dim_\nuc\plusone(\Cst(G))\leq
      \dim\plusone(G\z)\dim\plusone(\hat S).
    \]
  \item\label{nuclear-dim-for-subhomog-abelian-or-compact-isotropy2}
    Suppose that the isotropy subgroups are compact. Then
    \[
     \dim_\nuc(\Cst(G))\leq \dim (G\z).
    \]
  \end{enumerate}
\end{cor}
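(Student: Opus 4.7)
The plan is to apply Theorem~\ref{nuclear-dim-for-subhomog}, which bounds $\dim_\nuc\plusone(\Cst(G))$ by $\dim\plusone(\go)\cdot\sup_{u\in\go}\dim\plusone(\Prim\Cst(G(u)))$, so that the task in each case reduces to bounding $\dim(\Prim\Cst(G(u)))$ uniformly in $u$.

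For part~(b), each isotropy group $G(u)$ is compact, so by Peter-Weyl $\Cst(G(u))$ is a $c_0$-direct sum of finite-dimensional matrix algebras indexed by the discrete unitary dual $\widehat{G(u)}$. Consequently $\Prim\Cst(G(u))$ is a countable discrete space with topological dimension $0$, giving $\dim\plusone\bigl(\Prim\Cst(G(u))\bigr)=1$ for every $u$. Substituting into Theorem~\ref{nuclear-dim-for-subhomog} yields $\dim_\nuc\plusone(\Cst(G))\le\dim\plusone(\go)$, which rearranges to $\dim_\nuc(\Cst(G))\le\dim(\go)$.

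For part~(a), since each $G(u)$ is abelian and locally compact, Gelfand duality identifies $\Prim\Cst(G(u))$ with the Pontryagin dual $\widehat{G(u)}$. The hypothesis supplies a topological group isomorphism of $G(u)$ onto a closed subgroup $H_u$ of $S$, so $\widehat{G(u)}\cong\widehat{H_u}$. Dualising the short exact sequence $0\to H_u\to S\to S/H_u\to 0$ then exhibits $\widehat{H_u}$ as the quotient $\hat S/H_u^{\perp}$, presenting $\widehat{G(u)}$ as a topological quotient group of $\hat S$.

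The main obstacle is then the dimension estimate $\dim(\widehat{G(u)})\le\dim(\hat S)$, uniformly in $u$. I will invoke the standard fact, a consequence of the Pontryagin--van Kampen structure theorem for second countable LCA groups (every such group splits off a Euclidean factor from a group with a compact open subgroup, and covering dimension is additive over these pieces), that topological dimension is non-increasing under the quotient map $\hat S\to\hat S/N$ by any closed subgroup $N$. Granting this, substituting the uniform bound $\dim\plusone(\widehat{G(u)})\le\dim\plusone(\hat S)$ into Theorem~\ref{nuclear-dim-for-subhomog} delivers the claimed inequality $\dim_\nuc\plusone(\Cst(G))\le\dim\plusone(\go)\dim\plusone(\hat S)$.
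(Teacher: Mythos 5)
Your proposal is correct and follows essentially the same route as the paper: both apply Theorem~\ref{nuclear-dim-for-subhomog} and reduce to bounding $\dim\bigl(\Prim\Cst(G(u))\bigr)$, identifying $\Prim\Cst(G(u))$ with $\widehat{G(u)}\cong \hat S/G(u)^{\perp}$ in part~(a) and noting the dual of a compact group is discrete in part~(b). The only difference is bookkeeping: the paper justifies $\dim\bigl(\hat S/G(u)^{\perp}\bigr)\le\dim(\hat S)$ by citing Nagami's addition theorem $\dim(\hat S)=\dim\bigl(\hat S/G(u)^{\perp}\bigr)+\dim\bigl(G(u)^{\perp}\bigr)$ rather than your appeal to the structure theory of LCA groups, and cites Dixmier for discreteness of the dual in place of your Peter--Weyl argument.
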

\begin{proof} First assume that the isotropy subgroups are isomorphic
  to a subgroup of an abelian group $S$.  We identify each $G(u)$ with
  its homeomorphic copy in $S$ for all $x\in G\z$.  Since $S$ is
  abelian, each $\Cst(G(u))\cong C_0(G(u)^{\wedge})$. Then
  $\Prim\Cst(G(u))$ is homeomorphic to $G(u)^{\wedge}$. Since
  $G(u)^{\wedge}$ is homeomorphic to the quotient
  $\widehat{S}/G(u)^\perp$ of $\widehat{S}$ by the annihilator
  subgroup $G(u)^\perp$ we obtain
  \[
    \dim(\widehat{S})=\dim \big(\widehat{S}/G(u)^\perp
    \big)+\dim(G(u)^\perp )\geq\dim \big(\widehat{S}/G(u)^\perp
    \big)=\dim(G(u)^{\wedge})
  \]
  by \cite[Theorem~2.1]{Nagami}.  Now by
  \Cref{nuclear-dim-for-subhomog} we have
  \begin{align*}
    \dim_\nuc\plusone(\Cst(G))
    &\leq \dim\plusone(G\z)\sup_{u\in G\z}\dim\plusone(G(u)^{\wedge})\\
    &\le\dim\plusone(G\z)\dim\plusone(\widehat{S}).
  \end{align*}

  Second, suppose that all the isotropy subgroups are compact. Then for
  all $u\in G\z$ we have $\dim(\Prim\Cst(G(u)))=0$ because
  $\Prim \Cst(G(u))$ is discrete by \cite[18.4.3]{Dixmier}. Thus
  $\dim_\nuc\plusone(\Cst(G))\leq \dim\plusone(G\z) $ by
  \Cref{nuclear-dim-for-subhomog}.
\end{proof}

\begin{example}  Let $G=[-1,1]\times S_3$ be the transformation-group groupoid of \Cref{ex S3 acting}. By \Cref{nuclear-dim-for-subhomog-abelian-or-compact-isotropy}, $\dim_\nuc(\Cst(G))\leq\dim([-1,1])=1$. Now $\dim_\nuc(\Cst(G))=1$.
\end{example}

\begin{cor}\label{cor-nuc-dim-twist}
  Let $G$ be an \'etale groupoid and let $(\Sigma,\iota,\pi)$ be a
  twist over $G$. Suppose that there exists $M\in\NN$ such that
  $|G_{x}|\leq M$ for all $x\in G\z$.  Then
  \[
    \dim_\nuc\plusone(\Cst(\Sigma))\leq \dim\plusone(G\z).
  \]
  Furthermore,
  $\dim_\nuc\plusone(\Cst(\Sigma; G))\leq \dim\plusone(G\z)$.
\end{cor}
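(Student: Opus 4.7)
The plan is to combine \Cref{inverse image of groupoid with finite dad} with \Cref{nuclear-dim-for-subhomog-abelian-or-compact-isotropy}\cref{nuclear-dim-for-subhomog-abelian-or-compact-isotropy2}; once those two results are in hand the corollary becomes almost immediate.

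First, I would apply \Cref{inverse image of groupoid with finite dad} to the twist $(\Sigma,\iota,\pi)$: the hypothesis $|G_x|\leq M$ is precisely what that proposition requires, so we conclude that $\Sigma$ is a locally compact, Hausdorff, subhomogeneous groupoid whose isotropy subgroups are compact (each one sitting in an exact sequence $1\to\TT\to\Sigma(u)\to G(u)\to 1$ with $G(u)$ finite of size at most $M$, and with $\TT$ compact). Moreover, the same proposition gives that $\Cst(\Sigma;G)$, as a quotient of $\Cst(\Sigma)$, is subhomogeneous as well.

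Second, I would invoke \Cref{nuclear-dim-for-subhomog-abelian-or-compact-isotropy}\cref{nuclear-dim-for-subhomog-abelian-or-compact-isotropy2} with $\Sigma$ in place of $G$. This applies because $\Sigma$ is subhomogeneous with compact isotropy, and yields
\[
  \dim_\nuc(\Cst(\Sigma))\leq \dim(\Sigma\z).
\]
Since the map $\iota$ restricts to a homeomorphism of unit spaces, $\Sigma\z$ is homeomorphic to $G\z$, so $\dim(\Sigma\z)=\dim(G\z)$. Rearranging gives the first bound $\dim_\nuc\plusone(\Cst(\Sigma))\leq \dim\plusone(G\z)$.

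Third, for the twisted groupoid $\Cst$-algebra, I would use the fact recalled in the introduction that $\Cst(\Sigma;G)$ is a direct summand of $\Cst(\Sigma)$ by \cite{Brown-anHuef, IKRSW:Pushouts} (or, just as well, a quotient, which is all that is needed). Nuclear dimension does not increase under passage to quotients, hence
\[
  \dim_\nuc\plusone(\Cst(\Sigma;G))\leq \dim_\nuc\plusone(\Cst(\Sigma))\leq \dim\plusone(G\z).
\]
There is no real obstacle here, as the genuine work was already done in proving \Cref{inverse image of groupoid with finite dad} (which establishes the subhomogeneity together with compactness of the isotropy) and \Cref{nuclear-dim-for-subhomog-abelian-or-compact-isotropy}; the corollary is essentially a matter of bookkeeping. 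The only small point worth a sentence is that $\Sigma\z$ and $G\z$ agree as topological spaces so that the covering dimension appearing on the right-hand side can be written in terms of $G\z$.
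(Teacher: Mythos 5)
Your proposal is correct and follows essentially the same route as the paper: apply \Cref{inverse image of groupoid with finite dad} to see that $\Sigma$ is subhomogeneous with compact isotropy, then \Cref{nuclear-dim-for-subhomog-abelian-or-compact-isotropy}\cref{nuclear-dim-for-subhomog-abelian-or-compact-isotropy2} with $\Sigma\z=G\z$, and finally pass to the quotient $\Cst(\Sigma;G)$ using the fact that nuclear dimension does not increase under quotients (the paper cites \cite[Proposition~2.9]{WinterZacharias:nuclear} for this). No gaps.
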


\begin{proof}
  By \Cref{inverse image of groupoid with finite dad}, $\Sigma$ is a
  subhomogeneous groupoid with compact isotropy subgroups. The
  statement about $\Cst(\Sigma)$ now follows from
  \Cref{nuclear-dim-for-subhomog-abelian-or-compact-isotropy}
  \cref{nuclear-dim-for-subhomog-abelian-or-compact-isotropy2} and the
  statement about the quotient $\Cst(\Sigma;G)$ of $\Cst(\Sigma)$
  follows from \cite[Proposition~2.9]{WinterZacharias:nuclear}.
\end{proof}

\section{Nuclear dimension of \texorpdfstring{\cs}{C*}-algebras of
  \'etale groupoids with large isotropy subgroups}
\label{sec:nuclear-dimension-cs}

Here we consider the nuclear dimension of $\Cst$-algebras of groupoids
that are not subhomogeneous but still have an abundance of
subhomogeneous $\Cst$-subalgebras.  In particular, we consider
groupoids that are extensions of groupoids with finite dynamic
asymptotic dimension.  We start by recalling the definition of dynamic
asymptotic dimension from \cite{GWY:2017:DAD}.

\begin{definition}[{\cite[Definition
    5.1]{GWY:2017:DAD}}]\label{defn-dad} Let $G$ be an \'etale groupoid. 
  Then $G$ has \emph{dynamic asymptotic dimension} $d\in\NN$ if $d$ is
  the smallest natural number with the property that for every open
  and precompact subset $W\subset G$, there are open subsets
  $U_0, U_{1},\dots, U_d$ of $G\z $ that cover $s(W)\cup r(W)$ such
  that for $i\in\set{0,\dots, d}$ the set
  \[\set{\gamma \in W : s(\gamma), r(\gamma)\in U_i}\] is contained in
  a precompact subgroupoid of~$G$. If no such $d$ exists, then we say
  that $G$ has infinite dynamic asymptotic dimension. We write
  $\dad(G)$ for the dynamic asymptotic dimension of $G$ and use
  $\dad\plusone(G)$ for $\dad(G)+1$.
\end{definition}

Our main tool is the following proposition from \cite{CDaHGV}.

\begin{prop} [{\cite[Proposition~4.2]{CDaHGV}}]
  \label{nuclear dim via sublagebras}
  Let $A$ be a unital $\Cst$-algebra and let $X\subset A$ be such that
  $\operatorname{span}(X)$ is dense in $A$. Let $d,n\in \NN$. Suppose
  that for every finite $\Ff\subset X$ and every $\epsilon>0$ there
  exist $\Cst$-subalgebras $B_0,\ldots ,B_d$ of $A$ and
  $b_0,\ldots ,b_d\in A$ of norm at most $1$ such that
  $\dim_{\nuc}(B_i)\leq n$ and $b_{i}\mathcal{F}b_{i}^*\subset B_{i}$
  for $0\leq i\leq d$, and
  $\big\|x-\sum_{i=0}^d b_ixb_i^*\big\|<\epsilon$ for all $x\in\Ff$.
  Then the nuclear dimension of $A$ is at most $(d+1)(n+1)-1$.
\end{prop}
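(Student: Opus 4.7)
The plan is to apply the definition of nuclear dimension directly, constructing the required completely positive approximations of $A$ by composing the two approximations we are handed: the decomposition $x \approx \sum_{i=0}^d b_i x b_i^*$ on finite subsets of $X$, and, for each $i$, the completely positive factorisation of $B_i$ through finite-dimensional $\Cst$-algebras arising from $\dim_{\nuc}(B_i) \le n$. The summand count $(d+1)(n+1)$ is simply the product of the $d+1$ subalgebras $B_i$ with the $n+1$ summands in each of their finite-dimensional factorisations.

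Given a finite set $\mathcal{G} \subset A$ and $\epsilon > 0$, first I would use density of $\operatorname{span}(X)$ to choose a finite $\mathcal{F} \subset X$ so that each $a \in \mathcal{G}$ lies within $\delta$ of a linear combination $a' \in \operatorname{span}(\mathcal{F})$ whose coefficients are bounded by a constant $C$ depending only on $\mathcal{G}$. Apply the hypothesis to $\mathcal{F}$ and a small $\epsilon_1 > 0$ to obtain $\Cst$-subalgebras $B_0,\dots,B_d \subset A$ and contractive $b_0,\dots,b_d \in A$ with $b_i \mathcal{F} b_i^* \subset B_i$ and $\bigl\|x - \sum_i b_i x b_i^*\bigr\| < \epsilon_1$ for $x \in \mathcal{F}$. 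Then, for each $i$, apply $\dim_{\nuc}(B_i) \le n$ to the finite set $b_i\mathcal{F}b_i^* \subset B_i$ with tolerance $\epsilon_2 > 0$, obtaining a finite-dimensional $F^{(i)} = \bigoplus_{j=0}^n F^{(i)}_j$, a c.p.c.\ map $\psi_i \colon B_i \to F^{(i)}$, and a c.p.\ map $\phi_i \colon F^{(i)} \to B_i$ with $\phi_i|_{F^{(i)}_j}$ contractive and order zero and $\|\phi_i(\psi_i(y)) - y\| < \epsilon_2$ for $y \in b_i\mathcal{F}b_i^*$.

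Next I would extend each $\psi_i$ to a c.p.c.\ map $\tilde\psi_i \colon A \to F^{(i)}$ via Arveson's extension theorem (valid because finite-dimensional $\Cst$-algebras are injective). Setting $F \coloneqq \bigoplus_{i=0}^d F^{(i)}$, which has $(d+1)(n+1)$ summands $F^{(i)}_j$ in total, define $\psi \colon A \to F$ by $\psi(a) = (\tilde\psi_0(b_0 a b_0^*),\dots,\tilde\psi_d(b_d a b_d^*))$ and $\phi \colon F \to A$ by $\phi(y_0,\dots,y_d) = \sum_i \phi_i(y_i)$, viewing each $\phi_i$ as landing in $A$ via the inclusion $B_i \hookrightarrow A$. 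The map $\psi$ is c.p.c., because each compression $a \mapsto b_i a b_i^*$ is c.p.c.\ (by unitality of $A$ and $\|b_i\| \le 1$ one has $\|b_i b_i^*\| \le 1$) and the direct-sum norm on $F$ is the maximum; and each $\phi|_{F^{(i)}_j}$ coincides with $\phi_i|_{F^{(i)}_j}$ post-composed with the inclusion, hence is c.p.c.\ and order zero for all $(d+1)(n+1)$ indices.

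Finally, check the approximation. For $x \in \mathcal{F}$ we have $b_i x b_i^* \in B_i$, so $\tilde\psi_i(b_i x b_i^*) = \psi_i(b_i x b_i^*)$ and the triangle inequality gives $\|\phi(\psi(x)) - x\| \le (d+1)\epsilon_2 + \epsilon_1$. Extending by linearity to $a' \in \operatorname{span}(\mathcal{F})$, using $\|a - a'\| < \delta$, and bounding $\|\phi \circ \psi\|$ by $(d+1)(n+1)$ (as a sum of that many c.p.c.\ maps), I would obtain $\|\phi(\psi(a)) - a\|$ bounded by a linear combination of $\delta$, $\epsilon_1$, and $\epsilon_2$ with explicit coefficients depending on $C$, $d$, and $n$; choosing $\delta,\epsilon_1,\epsilon_2$ small enough makes this less than $\epsilon$. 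The main obstacle is really bookkeeping: one must track three independent tolerances and bridge the gap between $\operatorname{span}(\mathcal{F})$ and all of $A$ via Arveson's extension, but no deep new ideas beyond the hypothesis and the definition of $\dim_{\nuc}$ are required.
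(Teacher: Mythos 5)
Your argument is correct and is essentially the standard proof of this result: compress by the $b_i$, use the nuclear-dimension approximations of each $B_i$, extend the upward maps $\psi_i$ to all of $A$ by Arveson's extension theorem (finite-dimensional $\Cst$-algebras being injective), and track the three tolerances. Note that the paper itself gives no proof here --- it simply quotes \cite[Proposition~4.2]{CDaHGV} --- and your argument is the same one underlying that cited result, so there is nothing further to reconcile; the only cosmetic points are that your constant $C$ (and the size of $\Ff$) depends on $\delta$ as well as $\mathcal{G}$, which is harmless since $\epsilon_1,\epsilon_2$ are chosen afterwards, and that for non-unital $B_i$ the Arveson step uses the routine unitisation of a c.p.c.\ map first.
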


If we assume that the isotropy subgroups of $G$ vary continuously,
then the quotient $\gmiso$ is a locally compact, Hausdorff and \'etale
groupoid by \Cref{lem hypotheses for lc quotient}.  Furthermore, the
quotient map $\rho\colon G\to \gmiso$ is open. In particular,
$\dad(\gmiso)$ is defined in \Cref{main thm} below.

\begin{remark}\label{rem V_r}
  Let $V\colon C_b(G\z)\to M(\Cst(G))$ be the map from
  \Cref{prop-M-map}. After composing with the natural map of
  $M(\Cst(G))$ into $M(\Cst_\red(G))$, we obtain a homomorphism
  $V_\red \colon C_b(G\z) \to M(\Cst_\red(G))$ acting on
  $C_c(G)\subset \Cst_\red(G)$ exactly as at \Cref{eq-M-map}.  If $G$
  is \'etale, then the restrictions of both $V$ and $V_\red$ to
  $C_0(G\z)$ are obtained by inclusion and extension by zero of
  functions with compact support on $G\z$.
\end{remark}

\begin{thm}\label{main thm}
  Let $G$ be an \'etale groupoid with continuously varying isotropy
  subgroups that are uniformly subhomogeneous.  Then
  \[
    \dim\plusone_\nuc(\Cst_\red(G))\leq\sup_{u\in
      G\z}\dim\plusone\big(\Prim\Cst(G(u))\big)
    \dim\plusone(G\z)\dad\plusone(\gmiso).
  \]
\end{thm}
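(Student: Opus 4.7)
The overall plan is to apply \Cref{nuclear dim via sublagebras} with $d+1=\dad\plusone(\gmiso)$ subalgebras, each of nuclear dimension at most $n$, where $n+1=\dim\plusone(G\z)\sup_{u}\dim\plusone\bigl(\Prim\Cst(G(u))\bigr)$; the product $(d+1)(n+1)-1$ matches the stated bound. Since \Cref{nuclear dim via sublagebras} requires a unital algebra, I would first pass to the Alexandrov groupoid $\cG$, using that $\Cst_\red(G)$ is an ideal of the unital $\Cst$-algebra $\Cst_\red(\cG)$ (so the nuclear dimension bound transfers) and that $\cG/\Iso(\cG)\cong\alex(\gmiso)$ by \Cref{dad quotient}. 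All hypotheses pass to $\cG$: the isotropy is trivial at $\infty$, $\dim(\cG\z)=\dim(G\z)$, and $\dad(\alex(\gmiso))\le\dad(\gmiso)$ since the adjoined unit is itself a precompact subgroupoid absorbable into any chosen cover set.

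Given a finite $\mathcal{F}\subset C_c(G)$ and $\epsilon>0$, pick a compact $K\subset\cG$ containing the supports of the elements of $\mathcal{F}$ and an open precompact $W\subset\alex(\gmiso)$ containing $\bar\rho(K)$. Applying the definition of dynamic asymptotic dimension to $W$ produces open sets $U_0,\dots,U_d\subset\alex(\gmiso)\z\cong\cG\z$ covering $s(W)\cup r(W)$ such that the subgroupoid $H_i$ of $\alex(\gmiso)$ generated by $W_i:=\{\beta\in W:s(\beta),r(\beta)\in U_i\}$ is precompact. Set $G_i:=\bar\rho^{-1}(H_i)$; this is an open subgroupoid of $\cG$ by continuity of $\bar\rho$. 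I would then verify that $G_i$ is subhomogeneous: since $H_i$ is precompact, principal and étale in $\alex(\gmiso)$, its orbits are uniformly finite, so the orbits of $G_i$ (which coincide with those of $H_i$) are also uniformly finite; and since $H_i$ is principal, the isotropy of $G_i$ at each $u\in G_i\z$ equals $G(u)$, hence is uniformly subhomogeneous by hypothesis. \Cref{subhomg-characterisation} then gives subhomogeneity of $G_i$, and \Cref{nuclear-dim-for-subhomog} yields
\[
\dim_\nuc\plusone\bigl(\Cst(G_i)\bigr)\le\dim\plusone(G_i\z)\sup_{u}\dim\plusone\bigl(\Prim\Cst(G(u))\bigr)\le n+1,
\]
so I would take $B_i:=\Cst(G_i)=\Cst_\red(G_i)$ (equality by amenability from \Cref{subhomg-characterisation}), viewed as a $\Cst$-subalgebra of $\Cst_\red(\cG)$ via extension by zero.

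For the contractions, I would choose a partition of unity $\{h_i\}\subset C_c(\cG\z)$ subordinate to $\{U_i\}$ with $\sum_i h_i^{2}=1$ on $s(K)\cup r(K)$, and set $b_i:=h_i\in\Cst_\red(\cG)$. A direct computation gives $(b_ixb_i^*)(\gamma)=h_i(r(\gamma))h_i(s(\gamma))x(\gamma)$, supported in $\{\gamma\in K:r(\gamma),s(\gamma)\in U_i\}\subset G_i$, so $b_i\mathcal{F}b_i^*\subset B_i$. Using $\sum_i h_i(r)^{2}=\sum_i h_i(s)^{2}=1$ on $s(K)\cup r(K)$ yields the identity
\[
1-\sum_i h_i(r(\gamma))h_i(s(\gamma))=\tfrac{1}{2}\sum_i\bigl(h_i(r(\gamma))-h_i(s(\gamma))\bigr)^{2},
\]
so the norm estimate $\|x-\sum_i b_ixb_i^*\|<\epsilon$ reduces to making $|h_i(r(\gamma))-h_i(s(\gamma))|$ uniformly small on $K$ for each $i$. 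This is the principal obstacle: one must refine the DAD cover $\{U_i\}$ while keeping the number of sets at $\dad\plusone(\gmiso)$, so that for each $\gamma\in K$ both $r(\gamma)$ and $s(\gamma)$ lie in the same $U_i$, on whose support the chosen $h_i$ varies little. I would accomplish this by decomposing $K$ into finitely many open bisections of the étale groupoid $\cG$ and then suitably shrinking each $U_i$ using continuity and openness of $\bar\rho$ (cf.\ \Cref{lem hypotheses for lc quotient}); shrinking $U_i$ preserves the precompactness of $H_i$. Once this refinement is in place, \Cref{nuclear dim via sublagebras} delivers the stated nuclear dimension bound.
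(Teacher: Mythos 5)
Your overall architecture matches the paper's: pull back the precompact subgroupoids $H_i$ coming from the dynamic asymptotic dimension of $\gmiso$, observe that $\rho^{-1}(H_i)$ has uniformly bounded orbits and the original (uniformly subhomogeneous) isotropy, apply \Cref{subhomg-characterisation} and \Cref{nuclear-dim-for-subhomog} to get the bound $n$ on $\dim_\nuc(B_i)$, cut down by a partition of unity acting as multipliers, and feed everything into \Cref{nuclear dim via sublagebras}, with the Alexandrov groupoid and \Cref{dad quotient} handling non-compact $G\z$. Up to the order in which the unitization is introduced, this is the paper's proof.

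However, there is a genuine gap at what you yourself flag as the principal obstacle. Applying only the \emph{definition} of dynamic asymptotic dimension gives you the cover $U_0,\dots,U_d$ and the precompact $H_i$, but it does not give you a partition of unity $\{h_i\}$ with $\sum_i h_i^2=1$ that is \emph{approximately invariant}, i.e.\ with $\sup_{\gamma\in \rho(W)}|h_i(r(\gamma))-h_i(s(\gamma))|$ as small as you like. Your identity $1-\sum_i h_i(r(\gamma))h_i(s(\gamma))=\tfrac12\sum_i\bigl(h_i(r(\gamma))-h_i(s(\gamma))\bigr)^2$ correctly reduces the estimate to exactly this approximate invariance, but your proposed route to it does not work: you cannot arrange that $r(\gamma)$ and $s(\gamma)$ lie in the \emph{same} $U_i$ for every $\gamma\in K$ (an element of $K$ may straddle two cover sets, and forcing this would amount to $K\subset\bigcup_i \rho^{-1}(H_i)$, which dynamic asymptotic dimension does not provide), and shrinking the $U_i$ or decomposing $K$ into bisections neither preserves the covering property nor makes an arbitrary subordinate partition of unity vary little along $W$ --- for a fixed cover, a subordinate partition of unity will in general have $|h_i(r(\gamma))-h_i(s(\gamma))|$ of order $1$ on $K$ no matter how you cut $K$. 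The paper closes this gap by invoking \cite[Proposition~7.1]{GWY:2017:DAD}, which (for a prescribed $\delta$ and the precompact set $\rho(W)$) simultaneously produces the cover, the precompact subgroupoids, \emph{and} functions $h_i$ supported in $U_i$ with $\sum_i h_i^2=1$ and $\sup_{\gamma\in\rho(W)}|h_i(s(\gamma))-h_i(r(\gamma))|<\delta$; it then converts approximate invariance into the reduced-norm commutator estimate $\|fV_\red(h_i)-V_\red(h_i)f\|_\red<\epsilon/(d+1)$ via \cite[Lemma~8.20]{GWY:2017:DAD}, rather than through a pointwise/sup-norm argument. Without an appeal to (or a reproof of) these two results, your argument does not yield the required estimate $\bigl\|x-\sum_i b_ixb_i^*\bigr\|<\epsilon$.
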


\begin{proof} 
  The open and continuous quotient map $\rho\colon G\to G/\Iso(G)$
  restricts to a homeomorphism of the unit spaces, and so we identify
  them and write $G\z$ for both.  We may assume that
  $\dim(G\z)=N<\infty$ and that $\dad(\gmiso)=d<\infty$.

  First assume that the unit space $G\z $ is compact. Let $\Ff$ be a
  finite subset of $C_c(G)\setminus\{0\}$ and let $\epsilon>0$. There
  exists a compact subset $K$ of $G$ such that $f\in \Ff$ implies
  $\supp f\subset K$. Since $K\inv$ is compact, we may assume that
  $K=K\inv $. Since $G\z $ is compact we may also assume that
  $G\z \subset K$.

  Since $G\z$ is compact, $\Cst_\red(G)$ is unital. 
  Therefore
  $V_{r}$ maps into $\cs_{r}(G)$ and we can  let
  $V_r \colon C(G\z) \to \Cst_\red(G)$ be (restriction of) the map
  from \Cref{rem V_r}.  Let $W\subset G$ be an open, symmetric and
  precompact neighborhood of $K$. Then $\rho(W)$ is an open precompact
  neighbourhood of the compact set $\rho(K)$ in $\gmiso$.  By applying
  \cite[Lemma~8.20]{GWY:2017:DAD}
  to
  $\epsilon \bigl((d+1)\max_{f\in
    \mathcal{F}}\|f\|_{\Cst_{\red}(G)}\bigr)^{-1}$ and $\rho(K)$,
  there exists $\delta>0$ such that for $h\in C(G\z)^+$ and $f\in \Ff$
  \begin{align}\label{eq delta}
    \sup_{\gamma\in \rho(K)}|h(s(\gamma))- h(r((\gamma))|<
    \delta\quad\Longrightarrow\quad
    \|fV_\red(h)-V_\red(h)f\|_\red<\frac{\epsilon}{d+1}\|h\|. 
  \end{align} 

  By assumption, $\gmiso$ has dynamic asymptotic dimension $d$, and
  applying \cite[Proposition~7.1]{GWY:2017:DAD} to $\delta$ and
  $\rho(W)$ gives open sets $U_0, \dots, U_d$ covering $G\z$ such that
  for $0\leq i\leq d$
  \begin{enumerate}[label=\textup{(\arabic*)}]
  \item\label{consequence1} the subgroupoids $H_i$ of $\gmiso$
    generated by
    $\set{\gamma\in \rho(W): s (\gamma), r (\gamma)\in U_i}$ are open
    and precompact;
  \item\label{consequence2} there exist continuous
    $ h_i \colon G\z \to[0,1]$ with support in $U_i$ such that
    $\sum_{i=0}^d h_{i}^2=1$, and
    \[\sup_{\gamma\in \rho(W)}|h_{i}(s(\gamma))- h_{i}(r((\gamma))|<
      \delta .\]
  \end{enumerate}

  Since $\gmiso$ is \'etale and each $H_i$ is precompact in $\gmiso$,
  it follows from the proof of \cite[Proposition~4.3(1)]{CDaHGV} that
  the size of the orbits in each $H_i$ is uniformly bounded. Thus the
  orbits in $\rho\inv(H_i)$ are uniformaly bounded as well. Since the
  isotropy subgroups of $G$ are uniformly subhomogeneous, it follows
  from \Cref{subhomg-characterisation} that each $\rho^{-1}(H_i)$ is a
  subhomogeneous groupoid.  Now we can replace \ref{consequence1} and
  \ref{consequence2} above: we have open sets $U_0, \dots, U_d$
  covering $G\z$ such that
  \begin{enumerate}[label=\textup{(\arabic*$'$)}]
  \item\label{consequence1'} the subgroupoids $\rho^{-1}(H_i)$ of $G$
    generated by $\{\gamma\in W: s (\gamma), r (\gamma)\in U_i\}$ are
    open and subhomogeneous;
  \item\label{consequence2'} there are continuous functions
    $h_i \colon G\z \to[0,1]$ with support in $U_i$ such that
    $\sum_{i=0}^d h_{i}^2=1$, and
    \[\sup_{\gamma\in W}|h_{i}(s(\gamma))- h_{i}(r(\gamma))|< \delta
      .\]
  \end{enumerate}
  Since $H_i$ is an open subgroupoid of $\gmiso$, $\rho^{-1}(H_i)$ is
  an open subgroupoid of $G$, and we may identify
  $\Cst_\red(\rho^{-1}(H_i))$ with a $\Cst$-subalgebra $B_i$ of
  $\Cst_\red(G)$.  Each $\rho\inv(H_i)$ is subhomogeneous and hence
  amenable by \Cref{subhomg-characterisation}. Thus
  $\Cst_\red(\rho\inv(H_i))=\Cst(\rho\inv(H_i))$ and
  \Cref{nuclear-dim-for-subhomog} gives
  \begin{align}
    \dim_\nuc\plusone(\Cst_\red(\rho\inv(H_i)))
    &\leq \dim\plusone(U_i) \sup_{u\in
      U_i}\dim\plusone(\Prim\Cst(G(u))),\notag\\ 
    \intertext{which, since $U_i$ is a subset of the metrisable space
    $G\z$, is} 
    &\leq \dim\plusone(G\z)\sup_{u\in
      G\z}\dim\plusone(\Prim\Cst(G(u))).
      \label{eq nuc dim of B_i}
  \end{align}
  We now show that $V_\red(h_i)$ and $B_i$ satisfy the hypotheses of
  \Cref{nuclear dim via sublagebras} with regard to the set
  $\Ff\subset X\coloneqq C_c(G)\setminus\{0\}$ and the $\epsilon>0$
  fixed above.

  Let $f\in \Ff$. Then
  $0\neq (V_\red(h_i)fV_\red(h_i)^*)(\gamma)=h_{i}(r(
  \gamma))f(\gamma)h_{i}(s(\gamma))$ implies that $\gamma\in K$ and
  $r(\gamma), s(\gamma)\in U_i$, that is,
  $V_\red(h_i)\Ff V_\red(h_i)^*\subset B_i$.  Since
  $\sum h_i^2=1_{C(\go)}$ we have
  \begin{align*}
    f+\sum_{i=1}^d V_\red(h_i)&\big(fV_\red(h_i)-V_\red(h_i)f \big)\\
                              &=f+\sum_{i=1}^d\big(V_\red(h_i)fV_\red(h_i)-V_\red(h_i)^2f \big)\\
                              &=\sum_{i=1}^d V_\red(h_i)fV_\red(h_i).
  \end{align*}
  Thus
  \begin{align*}
    \Big\|f-\sum_{i=1}^d V_\red(h_i)fV_\red(h_i)^*\Big\|_\red
    &=
      \Big\|\sum_{i=1}^d V_\red(h_i)\big(fV_\red(h_i)-V_\red(h_i)f  \big) \Big\|_\red\\
    &\leq 
      (d+1)\|h_i\|_\infty\|fV_\red(h_i)-V_\red(h_i)f \|_\red<\epsilon
  \end{align*}
  since $\|h_i\|\leq 1$ for $0\leq i\leq d$ and by the choice of
  $\delta$ in \Cref{eq delta} above.

  It now follows from \Cref{nuclear dim via sublagebras} that if the
  unit space of $G$ is compact, then the nuclear dimension of
  $\Cst_\red(G)$ is at most
  \[\sup_{u\in
      G\z}\dim\plusone\big(\Prim\Cst(G(u))\big)
    \dim\plusone(G\z)\dad\plusone(G/\Iso(G))-1,
  \]
  as needed

  Now suppose that $G\z $ is not compact.  Let $\cG$ be the
  Alexandrov groupoid as described in Section~\ref{sec:preliminaries}.
  By \Cref{dad quotient}, $\cG/\Iso(\cG)$ is isomorphic to
  $\alex(G/\Iso(G))$, and hence $\dad(\cG/\Iso(\cG))=\dad(G/\Iso(G))$
  using \cite[Proposition~3.13]{CDaHGV}.  Furthermore, by
  \cite[Lemma~2.6]{CDaHGV} we have $\dim (\cG\z )=\dim(G\z )$.  Now
  the compact case above gives the result for $\Cst_\red(\cG)$.  By
  \cite[Lemma~3.8]{CDaHGV}, $\Cst_\red(\cG)$ is the minimal
  unitization of $\Cst_\red(G)$, and hence it follows from
  \cite[Remark~2.11]{WinterZacharias:nuclear} that the nuclear
  dimension of $\Cst_\red(G)$ has the same bound.
\end{proof}

We obtain two corollaries:

\begin{cor}\label{main thm cor1}
  Let $G$ be an \'etale groupoid with continuously varying isotropy
  subgroups.
  \begin{enumerate}
  \item\label{main thm cor1-item1} Suppose that the isotropy subgroups
    are isomorphic and homeomorphic to a subgroup of an abelian group
    $S$.  Write $\hat S$ for the dual group of $S$.  Then
    \[
      \dim\plusone_\nuc(\Cst_\red(G))\leq\dim\plusone(\hat
      S)\dim\plusone(G\z)\dad\plusone(\gmiso).
    \]
  \item\label{main thm cor1-item2} Suppose that the isotropy subgroups
    are compact and uniformly subhomogeneous. Then
    \[
      \dim_\nuc\plusone(\Cst_\red(G))\leq
      \dim\plusone(G\z)\dad\plusone(\gmiso).
    \]
  \end{enumerate}
\end{cor}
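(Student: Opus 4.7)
The plan is to derive both statements directly from Theorem~\ref{main thm} by bounding the term $\sup_{u\in G\z}\dim\plusone\bigl(\Prim\Cst(G(u))\bigr)$ appearing in its conclusion. In each case the hypotheses of \Cref{main thm}---that $G$ is \'etale, has continuously varying isotropy, and has uniformly subhomogeneous isotropy subgroups---need to be verified, after which the estimate reduces to a computation of topological dimensions of the duals of the stabilizers. The arguments mirror those already used in the proof of \Cref{nuclear-dim-for-subhomog-abelian-or-compact-isotropy}.

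For part~\cref{main thm cor1-item1}, note that any subgroup of an abelian group is abelian, hence $1$-subhomogeneous, so the isotropy subgroups are uniformly subhomogeneous and \Cref{main thm} applies. Identifying each $G(u)$ with its copy in $S$, abelianness gives $\Cst(G(u))\cong C_{0}(G(u)^{\wedge})$, so that $\Prim\Cst(G(u))$ is homeomorphic to $G(u)^{\wedge}\cong \widehat S/G(u)^{\perp}$. Applying \cite[Theorem~2.1]{Nagami} (as in the proof of \Cref{nuclear-dim-for-subhomog-abelian-or-compact-isotropy}) yields
\[
\dim\bigl(\Prim\Cst(G(u))\bigr)=\dim\bigl(\widehat S/G(u)^{\perp}\bigr)\leq \dim(\widehat S).
\]
Substituting this uniform bound into the conclusion of \Cref{main thm} gives the desired inequality.

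For part~\cref{main thm cor1-item2}, compactness and uniform subhomogeneity of the isotropy subgroups allow us to invoke \Cref{main thm}. For each $u\in G\z$, the group $G(u)$ is compact, so $\Cst(G(u))$ is Type~I with discrete spectrum by \cite[18.4.3]{Dixmier}; thus $\dim\bigl(\Prim\Cst(G(u))\bigr)=0$, and the supremum term in the bound of \Cref{main thm} equals $1$. Substituting gives
\[
\dim_\nuc\plusone(\Cst_\red(G))\leq \dim\plusone(G\z)\,\dad\plusone(\gmiso),
\]
as claimed.

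There is no real obstacle here: both parts are immediate from \Cref{main thm} once one recalls the two dimension estimates on $\Prim\Cst(G(u))$ already used in \Cref{nuclear-dim-for-subhomog-abelian-or-compact-isotropy}. The only point requiring a moment's care is checking uniform subhomogeneity of the isotropy in part~\cref{main thm cor1-item1}, which is trivial because all abelian groups are $1$-subhomogeneous.
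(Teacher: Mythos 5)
Your proposal is correct and amounts to essentially the same argument as the paper's: both rest on the two estimates $\dim(\Prim\Cst(G(u)))=\dim(G(u)^{\wedge})\le\dim(\widehat S)$ via \cite[Theorem~2.1]{Nagami} and $\dim(\Prim\Cst(G(u)))=0$ for compact $G(u)$ via \cite[18.4.3]{Dixmier}, exactly as in \Cref{nuclear-dim-for-subhomog-abelian-or-compact-isotropy}. The only cosmetic difference is that you plug these bounds into the statement of \Cref{main thm} as a black box, while the paper inserts them at the step bounding $\dim_\nuc\plusone(\Cst_\red(\rho\inv(H_i)))$ inside the proof of \Cref{main thm}; the resulting bounds are identical.
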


\begin{proof} Using
  \Cref{nuclear-dim-for-subhomog-abelian-or-compact-isotropy}, in
  \Cref{eq nuc dim of B_i} in the proof of Theorem~\ref{main thm} we
  now have
  $ \dim\plusone_\nuc(\Cst_\red(\rho\inv(H_i)) \leq
  \dim\plusone(G\z)\dim\plusone(\hat S)$ in \cref{main thm
    cor1-item1}, and
  $\dim\plusone_\nuc(\Cst_\red(\rho\inv(H_i)) \leq\dim\plusone(G\z)$
  in \cref{main thm cor1-item2}.  The result then follows as in the
  proof of Theorem~\ref{main thm}.
\end{proof}

\begin{cor}\label{main thm cor2}
  Let $G$ be an \'etale groupoid with continuously varying isotropy
  subgroups that are uniformly subhomogeneous.  Then $G$ must be
  amenable if $\dim(G\z)$, $\dad(\gmiso)$ and
  $\sup_{u\in G\z}\dim\plusone\big(\Prim\Cst(G(u))\big)$ are finite.
\end{cor}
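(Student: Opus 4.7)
The plan is to deduce amenability of $G$ from finite nuclear dimension of $\Cst_\red(G)$. Since the hypothesis of \Cref{main thm cor2} includes all the hypotheses of \Cref{main thm}, that theorem applies and yields the bound
\[
\dim\plusone_\nuc(\Cst_\red(G))\leq \sup_{u\in \go}\dim\plusone\bigl(\Prim\Cst(G(u))\bigr)\dim\plusone(\go)\,\dad\plusone(\gmiso).
\]
Under the three finiteness assumptions in the statement, the right-hand side is finite, so $\Cst_\red(G)$ has finite nuclear dimension.

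Next, I would invoke the standard fact from \cite{WinterZacharias:nuclear} that every $\Cst$-algebra of finite nuclear dimension is nuclear; hence $\Cst_\red(G)$ is nuclear.

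Finally, I would apply the theorem of Anantharaman-Delaroche that for a second countable, locally compact, Hausdorff, \'etale groupoid, nuclearity of the reduced $\Cst$-algebra implies (topological) amenability of the groupoid. Composed with the previous two steps, this gives the desired conclusion that $G$ is amenable.

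The main obstacle here is the last implication: \emph{nuclearity of $\Cst_\red(G)$ implies amenability of $G$} is the subtler direction of the equivalence, and is known to fail beyond the \'etale setting. Fortunately $G$ is \'etale by hypothesis, which is precisely the range of applicability of the relevant theorem, so the obstacle dissolves once the correct citation is identified. The reverse direction---amenability implies nuclearity of $\Cst_\red(G) = \Cst(G)$---which is already used implicitly elsewhere in the paper (e.g.\ in the proof of \Cref{main thm} to identify $\Cst_\red$ with $\Cst$ on the subhomogeneous pieces) is not what we need here.
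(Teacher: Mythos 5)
Your argument is correct and is essentially the paper's own proof: apply \Cref{main thm} to get finite nuclear dimension, conclude nuclearity of $\Cst_\red(G)$ via \cite{WinterZacharias:nuclear}, and then use the \'etale case of the Anantharaman-Delaroche--Renault result (cited in the paper as \cite[Corollary~6.2.14]{AnRe:Am-gps}) to deduce amenability. Your remark about why the \'etale hypothesis is needed for the last step matches the paper's reasoning exactly.
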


\begin{proof} By \Cref{main thm}, $\Cst_\red(G)$ has finite nuclear
  dimension and hence is nuclear by
  \cite[Remark~2.2]{WinterZacharias:nuclear}. Since $G$ is \'etale,
  this implies that $G$ is amenable
  \cite[Corollary~6.2.14]{AnRe:Am-gps}.
\end{proof}

\section{Application to \texorpdfstring{\cs}{C*}-algebras of directed
  graphs}
\label{sec directed graphs}

In this section we prove that all stably finite $\Cst$-algebras of
directed graphs havenuclear dimension is at most $1$.

\begin{thm}\label{application to AFE graphs}
  Let $E$ be a row-finite directed graph with no sources. Suppose that
  no return path in $E$ has an entrance. Then
  $\dim_\nuc(\Cst(E))\leq 1$.  In particular, all graph
  $\Cst$-algebras that are stably finite have nuclear dimension at
  most $1$.
\end{thm}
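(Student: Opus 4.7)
The plan is to apply \Cref{main thm} to the graph groupoid $G_E$ of $E$, after first verifying all its hypotheses and computing (or bounding) each of the three quantities appearing in the bound. Since $E$ is row-finite with no sources, $G_E$ is a second countable, \'etale, amenable groupoid, and $\Cst(E)\cong \Cst(G_E)=\Cst_\red(G_E)$; so it suffices to bound $\dim_\nuc(\Cst_\red(G_E))$.

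First I would verify the hypotheses of \Cref{main thm}. The unit space $G_E\z$ is the space of infinite paths in $E$, which is totally disconnected, so $\dim(G_E\z)=0$. The isotropy subgroup at an infinite path $x$ is trivial if $x$ is aperiodic and isomorphic to $\ZZ$ otherwise; in both cases $G_E(x)$ is abelian, hence $1$-subhomogeneous uniformly across $u\in G_E\z$. To show the isotropy subgroups vary continuously, I would invoke \Cref{lem hypotheses for lc quotient}: it is enough to show that $\Iso(G_E)$ is open in $G_E$, and this is precisely where the assumption ``no return path has an entrance'' enters---if $[\mu,k,x]\in\Iso(G_E)$, then $x$ is a tail of a return path and the hypothesis forces a neighborhood of $[\mu,k,x]$ to lie in $\Iso(G_E)$ as well.

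Next I would establish the crucial input that $\dad(G_E/\Iso(G_E))=0$. For this I would use the graph-theoretic construction promised in the introduction: given $E$ with no return path having an entrance, construct a directed graph $F$ with no return paths and an isomorphism of topological groupoids from $G_E/\Iso(G_E)$ onto an open subgroupoid of $G_F$. Because $G_F$ has no isotropy at all, a standard argument (or the observation in \cite[Proposition~4.3]{CDaHGV}) gives $\dad(G_F)=0$, and dynamic asymptotic dimension cannot increase when passing to an open subgroupoid; hence $\dad(G_E/\Iso(G_E))=0$. This construction of $F$ is the main technical obstacle; it will require carefully distinguishing the vertices of $E$ that sit on a cycle from those that do not, and replacing each isolated cycle by an infinite ``line graph'' so that the quotient by the $\ZZ$-isotropy becomes visible as an open piece of the new graph's groupoid.

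Finally I would plug into the bound of \Cref{main thm}. For each $u\in G_E\z$, $\Prim\Cst(G_E(u))$ is either a point (isotropy trivial) or the circle $\TT$ (isotropy $\ZZ$); in either case $\dim\plusone(\Prim\Cst(G_E(u)))\le 2$, so $\sup_{u}\dim\plusone(\Prim\Cst(G_E(u)))\le 2$. Combined with $\dim\plusone(G_E\z)=1$ and $\dad\plusone(G_E/\Iso(G_E))=1$, \Cref{main thm} yields
\[
\dim\plusone_\nuc(\Cst(E))=\dim\plusone_\nuc(\Cst_\red(G_E))\le 2\cdot 1\cdot 1=2,
\]
that is, $\dim_\nuc(\Cst(E))\le 1$. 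For the ``in particular'' claim, I would invoke \cite{Schafhauser-AFE}: the stably finite (equivalently, AF-embeddable) graph $\Cst$-algebras are exactly those of graphs in which no return path has an entrance, so the first assertion applies to all of them.
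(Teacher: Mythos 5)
Your proposal follows essentially the same route as the paper: $\dim(E^\infty)=0$, the hypothesis that no return path has an entrance makes $\Iso(G_E)$ open so the isotropy (trivial or $\ZZ$, hence abelian and uniformly subhomogeneous) varies continuously, the quotient $G_E/\Iso(G_E)$ is identified with an open subgroupoid of the groupoid of an ``unfurled'' graph $F$ with no return paths, whence $\dad(G_E/\Iso(G_E))=0$, and \Cref{main thm} (the paper invokes its abelian-isotropy corollary, \Cref{main thm cor1}) gives the bound $1$, with the stably finite statement coming from Schafhauser. One small correction: $\dad(G_F)=0$ is not a consequence of $G_F$ being principal (free minimal Cantor $\ZZ$-systems give principal \'etale groupoids of positive dynamic asymptotic dimension) but of $F$ having no return paths, via \cite[Lemma~5.7]{CDaHGV}, with \cite[Lemma~3.11]{CDaHGV} giving $\dad((G_F)|_U)=0$ for the open set $U$.
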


Before giving the proof, we require some preliminaries.  We will
construct a directed graph $F=F(E)$ from $E$ such that the graph
groupoid $G_F$ is principal, and show in \Cref{graph groupoid
  quotient} that if the isotropy subgroups of $G_E$ vary continuously,
then the quotient $G_E/\Iso(G_E)$ is isomorphic to the restriction of
$G_F$ to an open set $U\subset F^\infty$.  We will then apply
\Cref{main thm cor1} to conclude that $\dim_\nuc(\Cst(G_E))\leq 1$.
The motivation for our construction comes from \cite[\S7,
Example~1]{Clark-anHuef-RepTh} which was also used in
\cite[Example~5.8]{CDaHGV}.

We start with the required background on directed graphs.  Let
$E=(E^0,E^1,r,s)$ be a directed graph.  We use the convention from
\cite{Iain:CBMS} and list paths from the range.  A finite path $\mu$
of length $|\mu|=k$ is a sequence $\mu=\mu_{1}\mu_{2}\cdots\mu_k$ of
edges $\mu_i\in E^1$ with $s(\mu_j)=r(\mu_{j+1})$ for $1\le j\le k-1$;
we extend the source and range to finite paths by $s(\mu)=s(\mu_k)$
and $r(\mu)=r(\mu_{1})$.  An infinite path $x=x_{1}x_{2}\cdots$ is
similarly defined; note that $s(x)$ is undefined.  We think of the
vertices in $E^0$ as finite paths of length $0$. Let $E^*$ and
$E^\infty$ denote the sets of finite and infinite paths in $E$,
respectively. If $\mu=\mu_{1}\cdots\mu_k$ and $\nu=\nu_{1}\cdots\nu_j$
are finite paths with $s(\mu)=r(\nu)$, then $\mu\nu$ is the path
$\mu_{1}\cdots\mu_k\nu_{1}\cdots\nu_j$; when $x\in E^\infty$ with
$s(\mu)=r(x)$, define $\mu x$ similarly.

A \emph{return path} is a finite path
$\mu=\mu_{1}\mu_{2}\cdots\mu_{|\mu|}$ of non-zero length such that
$s(\mu)=r(\mu)$; it is \emph{simple} if
$\{r(\mu_1), \dots, r(\mu_{|\mu|})\}$ are distinct. A return path
$\mu$ has an \emph{entrance} if $|r\inv(r(\mu_i))|>1$ for some
$1\leq i\leq |\mu|$ and has an \emph{exit} if $|s\inv(s(\mu_i))|>1$
for some $1\leq i\leq |\mu|$.

The cylinder sets
\[
  Z(\mu)\coloneqq\{x\in E^\infty:
  x_{1}=\mu_{1},\ldots,x_{|\mu|}=\mu_{|\mu|}\},
\]
parameterised by $\mu\in E^*$, form a basis of compact, open sets for
a locally compact, totally disconnected, Hausdorff topology on
$E^\infty$ by \cite[Corollary~2.2]{KumjianPaskRaeburnRenault}.  We say
$E$ is row-finite if $r\inv (v)$ is finite for every $v\in E^0$. A
vertex $v\in E^0$ is called a source if $s\inv(v)=\emptyset$.

Given a row-finite graph $E$ with no sources, the graph groupoid $G_E$
is defined in \cite{KumjianPaskRaeburnRenault} as follows. Two paths
$x,y\in E^\infty$ are shift equivalent with lag $k\in\ZZ$ (written
$x\sim_k y$) if there exists $N\in\NN$ such that $x_i=y_{i+k}$ for all
$i\ge N$. Then the groupoid is
\[ G_E\coloneqq\{(x,k,y)\in E^\infty\times \ZZ\times E^\infty :
  x\sim_k y\}.\] with composable pairs
\[
  G_E\comp \coloneqq\{\big((x,k,y),(y,l,z)\big):(x,k,y),(y,l,z)\in
  G_E\},
\]
and composition and inverse given by
\[
  (x,k,y)\cdot (y,l,z)\coloneqq
  (x,k+l,z)\quad\text{and}\quad(x,k,y)\inv \coloneqq (y,-k,x).
\]
For $\mu,\nu\in E^*$ with $s(\mu)=s(\nu)$, set
\[
  Z(\mu,\nu)\coloneqq\{(x,k,y) : x\in Z(\mu), y\in Z(\nu),
  k=|\nu|-|\mu|, x_i=y_{i+k}\text{ for } i>|\mu|\}.
\]
By \cite[Proposition~2.6]{KumjianPaskRaeburnRenault},
$ \{Z(\mu,\nu) : \mu,\nu\in E^*, s(\mu)=s(\nu)\} $ is a basis of
compact, open sets for a second-countable, locally compact, Hausdorff
topology on $G_E$ such that $G_E$ is an \'etale groupoid; after
identifying $(x,0,x)\in G_E\z $ with $x\in E^\infty$, the subspace
topology on $G_E\z $ coincides with the topology on $E^\infty$.

\begin{remark}
  If no return path in $E$ has an entrance, then every return path is
  a simple return path.  By \cite{Schafhauser-AFE}, $\Cst(E)$ is
  stably finite (equivalently, AF-embeddable) if and only if no return
  path in $E$ has an entrance.
\end{remark}

For the remainder of this section $E$ will be a row-finite directed
graph with no sources such that \emph{no return path has an entrance}.
We also assume that every return path in $E$ has an exit (for
otherwise, the return path is disconnected from the rest of the
graph).

The next lemma shows that \Cref{main thm} applies to the graph
groupoid $G_E$ because its isotropy subgroupoid is open whence the
isotropy subgroups vary continuously by \Cref{lem hypotheses for lc
  quotient} (since uniform subhomogeneity is automatics as the
isotropy is abelian).

\begin{lemma}\label{lem open isotropy} The isotropy subgroupoid
  $\Iso(G_E)$ of $G_E$ is open.
\end{lemma}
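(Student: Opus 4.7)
The plan is to show every point of $\Iso(G_E)$ has an open neighborhood in $G_E$ contained in $\Iso(G_E)$. The case of a unit $(x,0,x)$ is free: since $G_E$ is \'etale, the unit space $G_E\z$ is open in $G_E$ and trivially contained in $\Iso(G_E)$. The real content is the case $(x,k,x)\in\Iso(G_E)$ with $k\neq 0$, and here I would prove the much stronger statement that $\{(x,k,x)\}$ is already open in $G_E$.

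Assume $k>0$ (the case $k<0$ follows by inversion). From $x\sim_k x$, fix $N\in\NN$ so that $x_i=x_{i+k}$ for all $i\geq N$. Pick $n\geq N$ and set $\mu=x_1\cdots x_n$ and $\nu=x_1\cdots x_{n+k}$; then $(x,k,x)\in Z(\mu,\nu)$, and the tail $x_{n+1}\cdots x_{n+k}$ is a return path at the vertex $s(x_n)=r(x_{n+1})$ (it closes up because $s(x_{n+k})=r(x_{n+k+1})=r(x_{n+1})$ by periodicity). The key claim is that $Z(\mu,\nu)=\{(x,k,x)\}$.

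To prove the claim, take $(y,k,z)\in Z(\mu,\nu)$, so that $y_i=x_i$ for $i\leq n$, $z_i=x_i$ for $i\leq n+k$, and $y_i=z_{i+k}$ for $i>n$. I would argue by induction on $i\geq n$ that $y_{i+1}=x_{i+1}$: assuming $y_i=x_i$, one has $r(y_{i+1})=s(y_i)=s(x_i)$, and since $i\geq N$ this vertex lies on the return path, so by the no-entrance hypothesis it has a unique incoming edge, which must be $x_{i+1}$. Hence $y=x$, and then $z_j=y_{j-k}=x_{j-k}=x_j$ for $j>n+k$ by periodicity; combined with $z\in Z(\nu)$ this gives $z=x$. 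Thus $(y,k,z)=(x,k,x)$.

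The main point where the hypothesis enters is exactly this inductive step: ``no return path has an entrance'' is equivalent to $|r^{-1}(v)|=1$ for every vertex $v$ on any return path, and once $n\geq N$, every vertex $s(x_i)$ with $i\geq n$ is such a vertex (it is one of $r(x_N),\ldots,r(x_{N+k-1})$). The only subtlety I anticipate is keeping the two constraints on $(y,k,z)\in Z(\mu,\nu)$---the prefix condition on $y$ and $z$, and the shift relation $y_i=z_{i+k}$---properly separated; once $y=x$ is established the rest is routine bookkeeping with periodicity.
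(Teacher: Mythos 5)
Your proof is correct and follows essentially the same route as the paper: for a non-unit isotropy element $(x,k,x)$ you take the basic open set $Z(\mu,\nu)$ with $\mu,\nu$ prefixes of $x$ reaching into the periodic tail, and use the no-entrance hypothesis (unique incoming edge at each return-path vertex) to see that this neighbourhood lies in $\Iso(G_E)$. The only difference is that you spell out the induction and obtain the slightly stronger fact $Z(\mu,\nu)=\{(x,k,x)\}$, whereas the paper records only the inclusion $Z(\mu,\nu)\subset\Iso(G_E)$, which is all that is needed.
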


\begin{proof}
  Fix $(x,k,x)\in \Iso(G_E)$. If $k=0$, then $(x,0,x)\in G_E\z$ which
  is an open set of $G_E$ contained in $\Iso(G_E)$. Next, suppose that
  $k\neq 0$; we may assume that $k>0$.  Then $x\sim_k x$, that is,
  there exists $N\in\NN$ such that $x_i=x_{i+k}$ for $i\geq N$. Thus
  $\alpha=x_i\cdots x_{i+k}$ is a return path.  Let
  $\mu=x_1\cdots x_i$ and $\nu=x_1\dots x_{i+k}$. Then
  $x\in Z(\mu, \nu)$ which is open in $G$. Further, since $\alpha$ has
  no entry, $Z(\mu, \nu)\subset \Iso(G_E)$. Thus $\Iso(G_E)$ is open.
\end{proof}

To distinguish range and source maps of the graph and of the graph
groupoid, we write $r_E$ and $s_E$ for the range and source in $E$.
For better \emph{and} worse, we have chosen our notation to follow
that of \cite[\S7, Example~1]{Clark-anHuef-RepTh}, and so we now write
return paths $\alpha$ as $\alpha_0\dots \alpha_{|\alpha|-1}$ where
$|\alpha|$ is the length of $\alpha$ (sorry).

We declare two return paths to be \emph{equivalent} if their sets of
vertices are the same.  We choose a set of representatives $R$ for the
equivalence classes of return paths such that if
$\alpha=\alpha_0\alpha_1\dots \alpha_n\in R$, then there is an exit to
$\alpha$ with source $r_E(\alpha)=r_E(\alpha_0)$.

We let $R^0$ be the vertices on return paths in $R$ and set
$V^0=E^0\setminus R^0$. For each
$\alpha=\alpha_0\alpha_1\dots \alpha_{|\alpha|-1}\in R$ we label its
vertices such that
\[
  w_{\alpha, i}=r_E(\alpha_i) \quad\text{for
    $0\leq i\leq |\alpha|-1$.}
\]
Let $F=(F^0, F^1, r, s)$ be the directed graph where
\begin{equation*}\label{F^0}
  F^0\coloneqq \set{v'\colon v\in V^0}\bigcup_{\alpha\in R}
  \set{w'_{\alpha, i}\colon i\in\NN} 
\end{equation*}
(thus every vertex of $E$ is duplicated and infinitely many vertices
$w'_{\alpha,i}\ (i\geq |\alpha|)$ are added for each $\alpha\in R$)
\begin{equation*}\label{F^1}
  F^1\coloneqq \set{e'\colon \text{$e\in E^1$ with $r_E(e)\in V^0$}}
  \bigcup_{\alpha\in R}\set{\alpha_i'\colon i\in\NN} 
\end{equation*}
and setting
\begin{align*}\label{range and source in F}
  &r_F(e')=r_E(e)'\text{\ and\ }s_F(e')=s_E(e)'\quad\text{if\ }
    r_E(e)\in V^0;\\
  &r_F(\alpha_i')=w_{\alpha, i}'\text{\ and\
    }s_F(\alpha_i')=w_{\alpha, i+1}'\quad\text{for\ }i\in\NN\notag 
\end{align*}
(thus edges not on return paths in $E$ are duplicated, return paths in
$E$ are ``unfurled'' and an infinite tail is added to them).  By
construction, $F$ is a directed graph with no return paths which we
call the unfurled version of $E$.

Allowing paths to be the empty path or an appropriate vertex, every
finite path of $E$ is of the form
\[
  \mu e \alpha^n\beta
\]
where $\mu$ is a path with range and source in $V^0$, $e$ is an exit
to a return path $\alpha$ (not necessarily in $R$) and $\beta$ is a
subpath of $\alpha$.  If $\alpha$ has range $w_{\alpha,i}$, then
$\mu e \alpha^n\beta$ corresponds to the unique path
$\mu'e'p_{\alpha, n, \beta}$ where $p_{\alpha, n, \beta}$ is the
unique path in $F$ with range $w_{\alpha_i}'$ of length
$n(|\alpha|-1)+|\beta|$. This gives an injection
$\Phi\colon E^*\to F^*$ with range in the set of paths of $F$ with
ranges in
\begin{equation}\label{U^0}
  U^{0}\coloneqq \set{v'\colon v\in V^0}\bigcup_{\alpha\in
    R}\set{w_{\alpha, i}'\colon 0\leq i\leq |\alpha|-1}. 
\end{equation}

An infinite path in $E^\infty$ is either of the form $x$ where the
vertices of $x$ are all in $V^0$ or of the form $\mu e\alpha^\infty$
where $\mu$ is a finite path in $E^*$ with range and source in $V^0$
and $e$ is an exit to a return path $\alpha$; the natural extension
$\Psi\colon E^\infty \to F^\infty$ of $\Phi$ is given by $\Psi(x)=x'$
and $\Psi(\mu e\alpha^\infty)$ is the unique infinite path in $F$
extending $\mu'e'$.

\begin{lemma}\label{rho-welldefined} Let $x, y\in E^\infty$ such that
  $x\sim_k y$. Then there exists unique $l=l(x,k,y)$ such that
  $\Psi(x)\sim_l\Psi(y)$.
\end{lemma}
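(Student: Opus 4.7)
Plan: The uniqueness of $l$, when it exists, should be extracted for free from the structure of $F$. Since $F$ has no return paths by construction (the closed cycles of $E$ are ``unfurled'' into the infinite tails $\{\alpha_i' : i\in\NN\}$), no infinite path in $F$ is eventually periodic. Hence every isotropy subgroup of $G_F$ is trivial, so if $\Psi(x)\sim_l\Psi(y)$ and $\Psi(x)\sim_{l'}\Psi(y)$, then $\Psi(x)\sim_{l-l'}\Psi(x)$ forces $l=l'$. I would record this aperiodicity observation at the very start so that the rest of the argument can concentrate on existence.

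For existence, I would split along the dichotomy for infinite paths in $E$ given immediately before the lemma. In the first case, $x$ has all its vertices in $V^0$. The shift-equivalence $x_i=y_{i+k}$ for $i\ge N$ then shows every edge $y_j$ with $j\ge N+k$ has both endpoints in $V^0$; combined with the fact that once an infinite path in $E$ enters a return-path vertex it must remain on that return path forever (because such paths have no entrance), this forces $y$ to have all its vertices in $V^0$ as well. But on vertices in $V^0$ the map $\Psi$ is simply priming, so the relation $x_i=y_{i+k}$ passes through $\Psi$ unchanged and gives $\Psi(x)\sim_k\Psi(y)$, i.e.\ $l=k$.

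In the second case, $x=\mu_x e_x\alpha^\infty$ and, by exactly the same forcing argument together with the rigidity of the tail, $y=\mu_y e_y\alpha^\infty$ for the \emph{same} $R$-representative $\alpha$. Applying the explicit description of $\Psi$ on the periodic tails: both $\Psi(x)$ and $\Psi(y)$ consist of a finite head ($\mu_x'e_x'$ or $\mu_y'e_y'$) followed by an infinite tail $\alpha_{j^*}'\alpha_{j^*+1}'\alpha_{j^*+2}'\cdots$, where $j^*_x$ and $j^*_y$ are the integer positions in $\{0,\dots,|\alpha|-1\}$ at which $e_x$ and $e_y$, respectively, enter $\alpha$. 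Then for $i$ large enough we have $\Psi(x)_i=\alpha_{j^*_x+(i-|\mu_x|-2)}'$ and $\Psi(y)_{i+l}=\alpha_{j^*_y+(i+l-|\mu_y|-2)}'$, and because the indices on the $\alpha_i'\in F^1$ run over $\NN$ without repetition, the equation $\Psi(x)_i=\Psi(y)_{i+l}$ is equivalent to the single linear identity $l=(j^*_x-j^*_y)+(|\mu_y|-|\mu_x|)$ in $\ZZ$, giving an explicit integer witness.

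The main obstacle, if any, is bookkeeping in case (b): one must track that the integer $l$ I pull out really lives in $\ZZ$ rather than merely in $\ZZ/|\alpha|\ZZ$ (which is all the analogous computation in $E$ produces from $k$). The key conceptual point is precisely that the unfurling in $F$ \emph{breaks} the cyclic indexing of $\alpha$ into the strictly increasing indexing of $\alpha_i'$, which is what simultaneously makes $G_F$ principal (giving uniqueness) and makes the lift $l$ of $k$ canonically well-defined (giving existence).
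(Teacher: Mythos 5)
Your proof is correct and follows essentially the same route as the paper: the same dichotomy for infinite paths (all vertices in $V^0$ versus eventual absorption into a return path $\alpha$) with an explicit integer lag in the second case, and uniqueness from the fact that $F$ has no return paths so $G_F$ is principal. The only cosmetic differences are that you verify principality directly via aperiodicity (where the paper cites \cite[Proposition~8.1]{Hazlewood-anHuef}) and carry out the index bookkeeping for the unfurled tail a bit more explicitly.
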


\begin{proof} There are two cases:
  \begin{enumerate}
  \item all the vertices on $x$ are in $V^0$ (and then the same is
    true of $y$);
  \item $x$ contains a return path $\alpha$ (and then so does $y$).
  \end{enumerate}
  First, suppose that all vertices of $x$ and $y$ are in $V^0$. Then
  $\Psi(x)\sim_k\Psi(y)$, so $l=k$.  Second, suppose that
  $x=\mu e\alpha^\infty$ and $y=\nu f\beta\alpha^\infty$ where $e, f$
  are exits from a return path $\alpha$ and $\beta$ is a subpath of
  $\alpha$. Then $\Psi(x)\sim_l\Psi(y)$ where
  $l=|\mu|+1-(|\nu|+1+|\beta|)$. This establishes existence.

  Suppose that $\Psi(x)\sim_{l_1}\Psi(y)$ and
  $\Psi(x)\sim_{l_2}\Psi(y)$. Then $(\Psi(x),l_1,\Psi(y))\in G_F$ and
  $(\Psi(x),l_2,\Psi(y))\in G_F$, giving
  \[(\Psi(x), l_1-l_2,
    \Psi(x))=\big(\Psi(x),l_1,\Psi(y))(\Psi(x),l_2,\Psi(y)\big)^{-1}\in
    G_F.\] Since $F$ has no return paths, $G_F$ is principal by
  \cite[Proposition~8.1]{Hazlewood-anHuef}. Thus $l_1=l_2$.
\end{proof}

\begin{thm} \label{graph groupoid quotient} Let $E$ be a row-finite
  directed graph with no sources such that no return path has an
  entrance.  Let $F$ be the unfurled version of $E$ described above,
  and set
  \begin{equation*}\label{set U}
    U=\bigcup_{\{v'\colon v\in V^0\}} Z(v')
    \bigcup_{\stackrel{\alpha\in R}{0\leq i\leq |\alpha|-1}}
    Z(w'_{\alpha, i}).
  \end{equation*}
  Then $U$ is open and $\Upsilon\colon G_E\to (G_F)|_U$ defined by
  \begin{equation*}\label{define rho}
    \Upsilon((x,k,y))=(\Psi(x), l(x,k,y), \Psi(y)).
  \end{equation*}
  is a surjective, open and continuous homomorphism that factors
  through an isomorphism
  $\tilde \Upsilon\colon G_E/\Iso(G_E)\to G_F|_U$ of topological
  groupoids.
\end{thm}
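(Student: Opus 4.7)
The plan is to verify the seven required properties of $\Upsilon$ in sequence, exploiting the fact that the unfurled graph $F$ has no return paths, so the groupoid $G_F$ is principal by \cite[Proposition~8.1]{Hazlewood-anHuef}. First I would note that $U$ is open as a union of basic cylinder sets in $F^\infty$, and verify that $\Upsilon$ is well defined. Lemma~\ref{rho-welldefined} provides the unique lag $l(x,k,y)$, while the construction of $\Psi$ ensures that every $\Psi(x)$ has range in $U^0$: if $r_E(x)\in V^0$ then $r_F(\Psi(x))=r_E(x)'\in U^0$, and if $r_E(x)=w_{\alpha,i}$ for some $\alpha\in R$ and $0\leq i\leq |\alpha|-1$, then $r_F(\Psi(x))=w'_{\alpha,i}\in U^0$; hence $\Psi(x),\Psi(y)\in U$. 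The homomorphism property follows once the multiplicativity of $l$ is established: the identity $l(x,k+l,z)=l(x,k,y)+l(y,l,z)$ is forced by the uniqueness clause in Lemma~\ref{rho-welldefined}.

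Next, for surjectivity I would analyse infinite paths in $F$ whose range lies in $U^0$. From any $v'\in V^0$ the available edges in $F^1$ with that range are precisely $e'$ for $e\in E^1$ with $r_E(e)=v$; as long as the traversal remains in $V^0$-vertices only duplicated $E$-edges are available, but once it reaches a vertex $w'_{\alpha,i}$ the only edge in $F^1$ with that range is $\alpha'_i$, forcing the tail $\alpha'_i\alpha'_{i+1}\cdots$ forever. This shows that $\Psi\colon E^\infty\to U$ is a bijection. Surjectivity of $\Upsilon$ then follows: given $(\xi,m,\eta)\in G_F|_U$, write $\xi=\Psi(x)$ and $\eta=\Psi(y)$; the relation $\Psi(x)\sim_m\Psi(y)$ in $F$ forces $x$ and $y$ to share an eventual tail in $E$ (either both avoiding return paths, or both ending in the same cyclic $\alpha^\infty$), so there is some $k\in\ZZ$ with $x\sim_k y$, and well-definedness yields $l(x,k,y)=m$.

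For continuity and openness of $\Upsilon$ I would work with the bases of compact open cylinder sets. When $s_E(\mu)\in V^0$ the image $\Upsilon(Z_E(\mu,\nu))$ is exactly $Z_F(\Phi(\mu),\Phi(\nu))\cap(G_F|_U)$; when $s_E(\mu)=w_{\alpha,i}$ lies on a return path, bijectivity of $\Psi$ combined with the absence of entrances to $\alpha$ shows that the image is expressible as a finite union of such $Z_F$-cylinders. Symmetrically, $\Upsilon\inv(Z_F(\sigma,\tau)\cap(G_F|_U))$ decomposes as an open union of basic $Z_E$-sets. The main obstacle in the whole proof is precisely this bookkeeping---tracking how cylinder sets in $G_E$ passing through return-path vertices correspond to cylinder sets in $G_F|_U$---and the absence of entrances to return paths is what keeps the correspondence finitary.

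Finally, for the factorisation I would analyse the fibres of $\Upsilon$. Two elements $(x,k_1,y),(x,k_2,y)\in G_E$ share an image iff $l(x,k_1,y)=l(x,k_2,y)$, and inspection of the two cases in the proof of Lemma~\ref{rho-welldefined} shows that $l$ depends only on $x$ and $y$; hence the kernel equivalence of $\Upsilon$ coincides with the right $\Iso(G_E)$-action on $G_E$. This produces a bijective homomorphism $\tilde\Upsilon\colon G_E/\Iso(G_E)\to G_F|_U$. By Lemma~\ref{lem open isotropy}, $\Iso(G_E)$ is open, and then by Lemma~\ref{lem hypotheses for lc quotient} the quotient map $\rho\colon G_E\to G_E/\Iso(G_E)$ is open and $G_E/\Iso(G_E)$ is an \'etale topological groupoid. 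Combining this with the openness and continuity of $\Upsilon$ established above yields that $\tilde\Upsilon$ is a continuous, open, bijective homomorphism, hence an isomorphism of topological groupoids.
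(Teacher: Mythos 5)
Your proposal is correct and its skeleton matches the paper's: well-definedness and multiplicativity of the lag via the uniqueness in \cref{rho-welldefined} (ultimately principality of $G_F$), identification of the fibres of $\Upsilon$ with the right $\Iso(G_E)$-orbits using injectivity of $\Psi$, and \cref{lem open isotropy} together with \cref{lem hypotheses for lc quotient} to make $G_E/\Iso(G_E)$ an \'etale topological groupoid so that $\tilde\Upsilon$ is a topological isomorphism. Where you genuinely diverge is the topological step. The paper gets continuity and openness in one stroke from the single identity $\Upsilon\inv\bigl(Z(\Phi(\xi),\Phi(\eta))\bigr)=Z(\xi,\eta)\Iso(G_E)$: the right-hand side is open because $\Iso(G_E)$ is open and products of open sets in a groupoid are open (continuity), and since $\Upsilon$ is invariant under the right $\Iso(G_E)$-action the same identity gives $\Upsilon(Z(\xi,\eta))=Z(\Phi(\xi),\Phi(\eta))$ (openness). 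You instead propose two-way cylinder bookkeeping, which you yourself flag as the main obstacle and leave unexecuted; it does go through --- for instance, when $s_E(\mu)$ lies on a return path both $Z(\mu,\nu)$ and its image are singleton cylinders because the continuation is forced in $E$ and in $F$, and in a preimage the elements whose lag is shifted by a multiple of $|\alpha|$ sit inside singleton cylinders contained in that preimage --- but this is exactly the case analysis the saturation identity sidesteps, and with your route the openness of $\Iso(G_E)$ is never used for continuity, only for the quotient-topology facts at the end. On the other hand, your surjectivity argument --- showing $\Psi\colon E^\infty\to U$ is a bijection because the only edge of $F$ with range a tail vertex $w'_{\alpha,i}$ is $\alpha_i'$, so paths in $U$ either stay among duplicated $V^0$-vertices or are trapped in a tail, and then matching tails in $F$ force shift equivalence in $E$ with $l(x,k,y)=m$ by uniqueness --- supplies detail that the paper only asserts in passing.
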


\begin{proof}
  That $\Upsilon$ is well-defined follows because $l(x,k,y)$ is unique
  by \Cref{rho-welldefined} and because $\Psi$ has range $U$; indeed
  $\Upsilon$ is onto $(G_F)|_U$.  Notice that $U$ is open because it
  is a union of cylinder sets.

  Let $\big((x, j, y), (y, k, z)\big)\in G_E\comp$.  Then
  \begin{align*}
    \Upsilon\big((x, j, y) (y, k, z)\big)
    &=\Upsilon\big((x, j+k, z) \big)
      =(\Psi(x), l_{j+k}, \Psi(z))
      \intertext{and}
      \Upsilon \big((x, j, y)\big)\Upsilon\big( (y, k, z)\big)
    &=(\Psi(x), l_{j}, \Psi(y))(\Psi(y), l_{k}, \Psi(z))=(\Psi(x),
      l_{j}+l_{k}, \Psi(z)). 
  \end{align*}
  Since $F$ has no return paths, $G_F$ is principal by
  \cite[Proposition~8.1]{Hazlewood-anHuef}, which implies that
  $l_{j}+l_{k}=l_{j+k}$. Thus $\Upsilon$ is an algebraic homomorphism.

  Let $j,k\in\ZZ$ such that $(x, k, y), (x, j, y)\in G_E$.  Then since
  $\Upsilon$ is a homomorphism
  \[\Upsilon((x,k, y))\Upsilon((x,j,y))^{-1}
    =\Upsilon((x, k-j, x))=(\Psi(x), 0, \Psi(x))
  \]
  because $G_F$ is principal. Thus
  $\Upsilon((x, k, y))=\Upsilon((x,j,y))$. Since $\Psi$ is injective
  it follows that
  \[
    \Upsilon((x,k, y))=\Upsilon((w,j,z))\Longleftrightarrow
    (x,k,y)(w,j,z)^{-1}=(x, k-j,x)\in \Iso(G_E).
  \]
  Thus $\Upsilon$ factors through an algebraic isomorphism
  $\tilde \Upsilon\colon G_E/\Iso(G_E)\to G_F|_U$, that is,
  $\Upsilon= \tilde\Upsilon\circ \rho$ where
  $\rho\colon G_E\to G_E/\Iso(G_E)$ is the quotient map.

  Let $\Phi\colon E^*\to F^*$ be the map described above at
  \Cref{U^0}. Then every non-empty, basic open neighbourhood of
  $(G_F)|_U$ is of the form $Z(\xi',\eta')$ where
  $\xi',\eta'\in\range\Phi$, say $\xi'=\Phi(\xi)$ and
  $\eta'=\Phi(\eta)$ for $\xi, \eta\in E^*$.  Then
  \begin{align*}
    \Upsilon^{-1}\big(Z(\xi',\eta')\big)
    &=
      \Upsilon^{-1}\big(\{(\xi'z, |\xi'|-|\eta'|, \eta' z)\colon z\in
      U \} \big)\\ 
    &=
      \{(\xi x, k, \eta x)\in G_E\colon x\in E^\infty,  k\in\ZZ \}\\
    &=Z(\xi,\eta)\Iso(G_E).
  \end{align*}
  This also gives
  \[
    Z(\xi',\eta')=\Upsilon\big(\Upsilon^{-1}\big(Z(\xi',\eta')\big)
    =\Upsilon\big(Z(\xi,\eta)\Iso(G_E)\big)=\Upsilon\big(Z(\xi,
    \eta)\big).
  \]
  Thus $\Upsilon$ maps basic open neighbourhood to open
  neighbourhoods, and hence $\Upsilon$ is open.  If $U$ is open in
  $G_E/\Iso(G_E)$, then $\tilde\Upsilon(U)=\Upsilon(q^{-1}(U))$ is
  open, and so $\tilde\Upsilon$ is also open.

  Finally, the isotropy subgroupoid $\Iso(G_E)$ is open in $G_E$ by
  \Cref{lem open isotropy}. Since products of open sets in a groupoid
  are open,
  $\Upsilon^{-1}\big(Z(\xi',\eta')\big) =Z(\xi,\eta)\Iso(G_E)$ is
  open. Thus inverse images under $\Upsilon$ of basic open
  neighbourhoods are open, and hence $\Upsilon$ is continuous. Thus
  $\tilde\Upsilon$ is an isomorphism of topological groupoids.
\end{proof}


\begin{proof}[Proof of \Cref{application to AFE graphs}]
  We identify $\Cst(E)$ and $\Cst(G_E)$. By
  \cite[Proposition~2.6]{KumjianPaskRaeburnRenault}, the unit space
  $E^\infty$ of $G_E$ is second countable, locally compact, Hausdorff
  (hence metrisable) and has a basis for a topology consisting of
  compact and open (hence clopen) sets. Thus $\dim(E^\infty)=0$ by
  \cite[Theorem~2.8.1]{Coornaert}. Since the isotropy subgroupoid is
  open by \Cref{lem open isotropy}, $G_E/\Iso(G_E)$ is locally
  compact, Hausdorff and \'etale by \Cref{lem hypotheses for lc
    quotient}.  All the isotropy subgroups are homeomorphic to $\ZZ$
  and the dual group $\TT$ of $\ZZ$ has $\dim(\TT)=1$
  \cite[\S1.5.9]{Engelking}.  Since no return path in $E$ has an
  entrance,
  by \Cref{graph groupoid quotient} there is a graph $F$ and an open
  neighbourhood $U$ of $F^\infty$ such that $G_E/\Iso(G_E)$ is
  isomorphic to the restriction of $G_F$ to $U$. Since $F$ has no
  return paths, $\dad(G_F)=0$ by \cite[Lemma~5.7]{CDaHGV}. Further,
  since $U$ is open, $\dad((G_F)|_U)=0$ by
  \cite[Lemma~3.11]{CDaHGV}. By \Cref{main thm cor1},
  \[\dim_\nuc(\Cst(G_E))=\dim_\nuc(\Cst(G_F|_U))\leq
    (1+1)(0+1)(0+1)-1=1.\qedhere\]
\end{proof}

The estimate found in \Cref{application to AFE graphs} is sharp as can
be seen from the graph $E$ in \cite[Example~5.8]{CDaHGV}. There
$\Cst(E)$ is stably finite (because $E$ has no return paths with
entries) but not AF (because $E$ has return paths); the computation in
\cite{CDaHGV} and \Cref{application to AFE graphs} both give
$\dim_\nuc(\Cst(E))=1$.

\section{Nuclear dimension of \texorpdfstring{\cs}{C*}-algebras of
  twists}
\label{sec:nuclear-dimension-cs-1}

In this section we demonstrate how to use our techniques to bound the
nuclear dimension of a $\Cst$-algebra of a non-etale groupoid by
considering a twist over an \'etale groupoid.

\begin{thm}\label{thm-twist-and-recover} 
  Let $G$ be an \'etale groupoid and let $(\Sigma,\iota,\pi)$ be a
  twist over $G$.  Then
  \[\dim_\nuc\plusone(\Cst_\red(\Sigma))\leq
    \dad\plusone(G)\dim\plusone(G\z)
  \]
\end{thm}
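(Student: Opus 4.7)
The plan is to mimic the proof of \Cref{main thm}, with $G$ itself playing the role of the small-scale quotient and with the open subgroupoids $\pi^{-1}(H_i) \subset \Sigma$ playing the role of the subhomogeneous pieces $\rho^{-1}(H_i)$. The essential inputs are already in place: by \Cref{inverse image of groupoid with finite dad}, the preimage under $\pi$ of any precompact open subgroupoid of $G$ is subhomogeneous with compact isotropy; \Cref{nuclear-dim-for-subhomog-abelian-or-compact-isotropy}\cref{nuclear-dim-for-subhomog-abelian-or-compact-isotropy2} then bounds its nuclear dimension by $\dim(G\z)$. Subhomogeneity also forces amenability via \Cref{subhomg-characterisation}, so each such preimage satisfies $\Cst = \Cst_\red$ and embeds as a $\Cst$-subalgebra of $\Cst_\red(\Sigma)$.

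As in the last step of the proof of \Cref{main thm}, first reduce to compact $G\z$ via the Alexandrov construction, extending $\Sigma$ to a twist over $\alex(G)$ whose reduced $\Cst$-algebra is the minimal unitization of $\Cst_\red(\Sigma)$ (compare \cite[Lemma~3.8]{CDaHGV}); neither $\dim(G\z)$ nor $\dad(G)$ changes. With $G\z$ compact, fix a finite $\mathcal{F} \subset C_c(\Sigma)$ and $\epsilon > 0$, and let $K \subset \Sigma$ be a compact set containing all supports.  Using \Cref{prop-M-map} and \Cref{rem V_r} obtain $V_\red\colon C(G\z) \to M(\Cst_\red(\Sigma))$, and using the $I$-norm estimate on the compact set $\pi(K) \subset G$ find $\delta > 0$ such that, for every $f \in \mathcal{F}$ and every real $h \in C(G\z)$ of norm at most $1$,
\[
  \sup_{\gamma \in \pi(K)} |h(s(\gamma)) - h(r(\gamma))| < \delta
  \implies \|fV_\red(h) - V_\red(h)f\|_\red < \frac{\epsilon}{d+1}.
\]
The crucial observation is that the commutator at $e \in \Sigma$ equals $\bigl(h(r(\pi(e))) - h(s(\pi(e)))\bigr)f(e)$, since $\pi$ is a homomorphism restricting to a homeomorphism on the unit spaces.

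Now apply \cite[Proposition~7.1]{GWY:2017:DAD} to a precompact, symmetric open neighborhood $W$ of $\pi(K)$ and to $\delta$ to obtain open sets $U_0, \dots, U_d$ covering $G\z$ and continuous $h_i\colon G\z \to [0,1]$ with $\supp h_i \subset U_i$, $\sum_{i=0}^d h_i^2 = 1$ and the required slow variation on $W$, such that the subgroupoids $H_i \subset G$ generated by $\set{\gamma \in W : r(\gamma), s(\gamma) \in U_i}$ are open and precompact. Set $\Sigma_i := \pi^{-1}(H_i)$, an open subgroupoid of $\Sigma$, and $B_i := \Cst_\red(\Sigma_i) \hookrightarrow \Cst_\red(\Sigma)$; by the first paragraph, $\dim_\nuc(B_i) \leq \dim(G\z)$. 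Put $b_i = V_\red(h_i)$. If $\bigl(V_\red(h_i) f V_\red(h_i)\bigr)(e) \neq 0$ then $e \in K$ with $r(e), s(e) \in U_i$, so $\pi(e) \in H_i$ and $e \in \Sigma_i$, giving $b_i \mathcal{F} b_i^* \subset B_i$. Using $\sum_i V_\red(h_i)^2 = V_\red(1) = 1$, rewrite
\[
  f - \sum_{i=0}^d b_i f b_i^*
  = \sum_{i=0}^d V_\red(h_i)\bigl(V_\red(h_i)f - fV_\red(h_i)\bigr),
\]
and the commutator bound makes this of reduced norm less than $\epsilon$. \Cref{nuclear dim via sublagebras} then yields $\dim_\nuc\plusone(\Cst_\red(\Sigma)) \leq (d+1)\bigl(\dim(G\z)+1\bigr)$, as claimed.

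The main obstacle is the commutator estimate: \cite[Lemma~8.20]{GWY:2017:DAD} is phrased for \'etale groupoids, whereas $\Sigma$ is never \'etale. The estimate nevertheless goes through because $\Sigma$ carries a natural Haar system from \cite[Lemma~2.4]{CDaHGV} for which the $I$-norm of functions supported in $K$ is finite, and because the maps $r, s$ on $\Sigma$ factor through $\pi$ at the level of the unit space. Once this ingredient is in place the rest of the proof follows the template of \Cref{main thm}, with the roles of continuously-varying abelian isotropy and of the open quotient map $\rho$ now played by \Cref{inverse image of groupoid with finite dad} and by $\pi$ respectively; the Alexandrov reduction is routine bookkeeping.
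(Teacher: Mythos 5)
Your outline follows the paper's strategy (the same cover coming from $\dad(G)$ applied to $\pi(W)$, the same subhomogeneous subalgebras $\Cst_\red(\pi^{-1}(H_i))$ bounded via \Cref{inverse image of groupoid with finite dad} and \Cref{nuclear-dim-for-subhomog-abelian-or-compact-isotropy}, the same cutting elements $V_\red(h_i)$), and your commutator estimate is fine: since $V_\red(h)f-fV_\red(h)$ is the function $e\mapsto\bigl(h(r(e))-h(s(e))\bigr)f(e)$, its reduced norm is dominated by its $I$-norm, which is at most $\sup_{e\in K}|h(r(e))-h(s(e))|\,\|f\|_I$; with $\Ff$ finite this replaces the paper's bisection-based \Cref{commutator with bisection2} and is a legitimate simplification. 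The genuine gap is the final step. You invoke \Cref{nuclear dim via sublagebras}, which requires $A$ to be \emph{unital} and the cutting elements $b_i$ to lie \emph{in} $A$. But $\Cst_\red(\Sigma)$ is never unital, even when $\Sigma\z=G\z$ is compact: $\Sigma$ is not \'etale, $\Sigma\z$ is not open in $\Sigma$, and (for instance by the $\ZZ$-graded decomposition of $\Cst_\red(\Sigma)$ into a $c_0$-direct sum of twisted groupoid algebras) the identity of $M(\Cst_\red(\Sigma))$ is not in $\Cst_\red(\Sigma)$. Your $b_i=V_\red(h_i)$, and in particular the relation $\sum_i V_\red(h_i)^2=V_\red(1)=1$, live only in the multiplier algebra, so the hypotheses of \Cref{nuclear dim via sublagebras} are simply not met, and compactness of the unit space does not rescue this (your Alexandrov-extended twist has the same problem). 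This is exactly why the paper proves the non-unital \Cref{abstraction-nonunital}: one passes to the unital algebra $C=\Cst_\red(\Sigma)+M$ with $M=V_\red(C(G\z))$ and to the subalgebras $C_i=B_i+M$, and then one needs \Cref{subhomg extension} (subhomogeneity of the extension $C_i$ of the commutative $M$-quotient by the subhomogeneous ideal $B_i$) to keep the bound multiplicative; the naive extension estimate from \cite[Proposition~2.9]{WinterZacharias:nuclear} would only give $(d+1)\bigl(2\dim(G\z)+2\bigr)$, weaker than the claimed $(d+1)\bigl(\dim(G\z)+1\bigr)$. Without some such device your argument does not reach the stated bound.

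A secondary inaccuracy: in your reduction to compact unit space, the reduced $\Cst$-algebra of the twist over $\alex(G)$ is \emph{not} the minimal unitization of $\Cst_\red(\Sigma)$; the quotient by the ideal $\Cst_\red(\Sigma)$ is $\Cst$ of the fibre $\TT$ over the point at infinity, i.e.\ $C(\TT)$, as in the paper's exact sequence. The conclusion you need survives, since $\Cst_\red(\Sigma)$ is an ideal and ideals have nuclear dimension at most that of the ambient algebra by \cite[Proposition~2.9]{WinterZacharias:nuclear}, but as stated the citation of \cite[Lemma~3.8]{CDaHGV} (which concerns the \'etale case) does not apply to the twist.
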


Before giving the proof, we require some preliminaries.
Since the twisted groupoid $\Cst$-algebra is a quotient of
$\Cst_\red(\Sigma)$ we recover
\cite[Theorem~3.2]{Bonicke-Li-Nuclear-dim} from
\Cref{thm-twist-and-recover}.  Conversely,
\cite[Theorem~3.2]{Bonicke-Li-Nuclear-dim} and the decomposition from
\cite[Proposition~3.7]{ikrsw:nzjm21}---where the $A$ in that result is
the
circle $\mathbf T$---of
$\Cst_\red(\Sigma)$ into a direct sum of twisted groupoid
$\Cst$-algebras can be used to prove \Cref{thm-twist-and-recover}. But
here we want to test our techniques for non-\'etale groupoids.

Even if $G\z=\Sigma\z$ is compact, $\Cst_\red(\Sigma)$ is not unital,
and we need to develop a non-unital version of \Cref{nuclear dim via
  sublagebras}; we do this in \Cref{abstraction-nonunital} below after
considering the nuclear dimension of extensions of subhomogeneous
$\Cst$-algebras by subhomogeneous $\Cst$-algebras.

\begin{lemma}\label{subhomg extension}  Let $I$ be an ideal of a $\Cst$-algebra $A$.
  If $I$ is $m$-subhomogeneous and $A/I$ is $n$-subhomogenous, then
  $A$ is $\max\{m,n\}$-subhomogeneous. If $A$ is separable, then
  $\dim_\nuc{A}=\max\{\dim_\nuc(I), \dim_\nuc(A/I)\}$.
\end{lemma}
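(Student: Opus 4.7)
The plan is to handle the two assertions separately, establishing the subhomogeneity first and then leveraging it to pin down the nuclear dimension.

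For the first assertion, I would start from the standard ideal/quotient dichotomy for irreducible representations. Let $\pi\colon A\to B(H)$ be an irreducible representation. Either $\pi|_I=0$, in which case $\pi$ descends to an irreducible representation of $A/I$ and hence $\dim(H)\le n$, or $\pi|_I\neq 0$, in which case $\pi|_I$ is a nonzero (and hence irreducible) representation of $I$ acting on $H$, so $\dim(H)\le m$. In either case $\dim(H)\le\max\{m,n\}$, giving the claim.

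For the second assertion, assume $A$ is separable. The lower bound $\dim_\nuc(A)\ge\max\{\dim_\nuc(I),\dim_\nuc(A/I)\}$ follows from the standard facts that nuclear dimension does not increase under passage to ideals or quotients (these are among the basic permanence properties in \cite{WinterZacharias:nuclear}, and the quotient case is \cite[Proposition~2.9]{WinterZacharias:nuclear}). For the reverse bound I would use the characterisation, valid for separable subhomogeneous \cs-algebras from \cite[\S1.6]{Winter:decomposition-rank}, that $\dim_\nuc(B)=\max_{M}\dim(\Prim_M B)$, applied to $B=A$, $B=I$, and $B=A/I$. By \Cref{composition series} each $\Prim_M A$ is locally compact Hausdorff, and since $A$ is separable it is second countable and hence metrisable.

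The key point is now a dimension-theoretic decomposition of $\Prim_M A$. The set of primitive ideals containing $I$ is closed in the Jacobson topology and, via the usual homeomorphism with $\Prim(A/I)$, identifies $\Prim_M(A/I)$ with a closed subset of $\Prim_M A$; its complement identifies $\Prim_M I$ with the open complement. Since $\Prim_M A$ is a separable metrisable space written as the union of an open and a closed subset, the countable sum theorem together with the fact that dimension is monotone for closed (equivalently, $F_\sigma$) subsets of separable metrisable spaces (\cite[Theorem~1.5.3]{Engelking} or the analogous statements in \cite[Ch.~3]{Engelking}) gives
\[
\dim(\Prim_M A)=\max\bigl\{\dim(\Prim_M I),\,\dim(\Prim_M (A/I))\bigr\}.
\]
Taking the maximum over $M$ yields the desired equality. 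The main obstacle is the last step: one needs that a separable metrisable space $X=U\sqcup F$ (with $U$ open, $F$ closed) satisfies $\dim(X)=\max\{\dim(U),\dim(F)\}$, which is exactly where separability/metrisability of $\Prim_M A$ is essential; everything else is bookkeeping from the subhomogeneous theory already developed in \S\ref{sec:subh-cs-algebr}.
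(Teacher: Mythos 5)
Your proposal is correct and follows essentially the same route as the paper: the ideal/quotient dichotomy for irreducible representations gives subhomogeneity, and the nuclear dimension is computed via $\dim_\nuc=\max_M\dim(\Prim_M\cdot)$ for separable subhomogeneous algebras, decomposing $\Prim_M A$ into an open piece homeomorphic to $\Prim_M I$ and a closed piece homeomorphic to $\Prim_M(A/I)$ and applying the sum/monotonicity theorems for separable metrisable spaces. The only differences are cosmetic (the paper cites Dixmier for local compactness of $\Prim_M A$ and \cite[Corollary~1.5.5]{Engelking} where you spell out the countable sum theorem, and it gets the lower bound from \cite[Proposition~2.9]{WinterZacharias:nuclear} rather than from monotonicity of covering dimension), so no further comparison is needed.
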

\begin{proof} Let $q\colon A\to A/I$ be the quotient map.  Fix an
  irreducible representation $\pi\colon A\to B(H_\pi)$.  If
  $\pi|_I=0$, then $\pi\circ q\colon A/I\to B(H_\pi)$ is an
  irreducible representation of $A/I$; since $A/I$ is
  $n$-subhomogeneous, $\dim(H_\pi)\leq n$.  If $\pi|_I\neq 0$, then
  $\pi|_I\colon I\to B(H_\pi)$ is an irreducible representation of $I$
  (see, for example, the proof of \cite[Proposition~A26]{tfb}), and
  hence $\dim(H_\pi)\leq m$. Thus $A$ is $\max\{m,n\}$-subhomogeneous.

  Suppose that $A$ is separable. Then $\Prim A $ is second
  countable. Also, by \cite[\S1.6]{Winter:decomposition-rank} we have
  \[
    \dim_\nuc(A)= \max_k\{\dim(\Prim_k A)\}.
  \]
  Fix $k$ such that $\Prim_k A \neq\emptyset$. By
  \cite[Proposition~3.6.4]{Dixmier}, $\Prim_k A $ is locally compact
  and Hausdorff, and hence $\Prim_k A $ is metrisable.  It follows
  from \cite[Proposition~A26]{tfb} that
  $\{P\in \Prim_k A :I\not\subset P\}$ is open in $\Prim_k A $ and is
  homeomorphic to $\Prim_k I$, and that
  $\{P\in \Prim_k A :I \subset P\}$ is closed in $\Prim_k A $ and is
  homeomorphic to $\Prim_k A/I$.  Now $\Prim_k A $ is a separable
  metric space which is the union of an open and a closed subset both
  of dimension at most $\max\{\dim_\nuc(I), \dim_\nuc(A/I)\}$. Thus
  \[\dim(\Prim_k A )\leq \max\{\dim_\nuc(I), \dim_\nuc(A/I)\}\] by
  \cite[Corollary~1.5.5]{Engelking}. It follows that
  $\dim_\nuc(A)\leq \max\{\dim_\nuc(I), \dim_\nuc(A/I)\}$.  The
  reverse inequality follows from
  \cite[Proposition~2.9]{WinterZacharias:nuclear}.
\end{proof}

\begin{prop}\label{abstraction-nonunital}
  Let $A$ be a non-unital $\Cst$-algebra, let $X\subset A$ be such
  that $\operatorname{span}(X)$ is dense in $A$ and let $M$ be a
  unital, commutative $\Cst$-subalgebra of $M(A)$. Let $d, n\in
  \NN$. Suppose that for every finite $\Ff\subset X$ and every
  $\epsilon>0$, there exist $\Cst$-subalgebras $B_0,\ldots ,B_d$ of
  $A$ and $b_0,\ldots ,b_d\in M$ such that $\sum_{i=0}^d b_ib_i^*=1$,
  and for $1\leq i\leq d$ we have $\dim_{\nuc}(B_i)\leq n$,
  $b_{i}\Ff b_{i}^*\in B_{i}$ and
  $\|x-\sum_{i=0}^d b_ixb_i^*\big\|<\epsilon$ for $x\in\Ff$.
  Then
  \[\dim\plusone_\nuc(A)\leq (d+1)\big(\dim(\widehat{M})+n+2\big).\]
  If the $B_i$ are separable and subhomogeneous for $0\leq i\leq d$,
  then
  \[\dim\plusone_\nuc(A)\leq (d+1)\big(\max\{n, \dim(\widehat M)
    \}+1\big).\]
\end{prop}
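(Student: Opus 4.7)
Plan: Although the elements $b_i$ lie in the multiplier algebra $M(A)$ rather than in $A$, the conjugation maps $a\mapsto b_iab_i^*$ remain completely positive contractions on $A$ (contractivity because $\sum_i b_ib_i^*=1$ in the commutative algebra $M$ forces $b_ib_i^*\le 1$ and hence $\|b_i\|\le 1$), so the proof of the unital \Cref{nuclear dim via sublagebras} adapts essentially verbatim. The only extra ingredient is Arveson's extension theorem, used to lift the finite-dimensional CPC approximations of each $B_i$ to all of $A$.

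Given $\Ff\subset X$ finite and $\epsilon>0$, first apply the hypothesis to produce $B_0,\dots,B_d$ with $\dim_\nuc(B_i)\le n$ and $b_0,\dots,b_d\in M$ with the stated approximation holding to within $\epsilon/2$. For each $i$, use $\dim_\nuc(B_i)\le n$ to choose CPC maps $\psi_i\colon B_i\to F_i=\bigoplus_{j=0}^n F_{ij}$ and CP maps $\phi_i\colon F_i\to B_i$ with each $\phi_i|_{F_{ij}}$ contractive and order zero, so that $\|\phi_i\psi_i(y)-y\|<\epsilon/(2(d+1))$ for $y\in b_i\Ff b_i^*$. Since each finite-dimensional $F_i$ is injective, Arveson's theorem extends $\psi_i$ to a CPC map $\widetilde\psi_i\colon A\to F_i$. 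Setting $F:=\bigoplus_{i,j}F_{ij}$ and
\[
\Psi(a)=\bigl(\widetilde\psi_i(b_iab_i^*)\bigr)_i,\qquad \Phi\bigl((y_{ij})\bigr)=\sum_{i,j}\phi_i|_{F_{ij}}(y_{ij}),
\]
one checks that $\Psi\colon A\to F$ is CPC, that $\Phi\colon F\to A$ is CP with each $\Phi|_{F_{ij}}=\phi_i|_{F_{ij}}$ contractive and order zero, and that for $x\in\Ff$ the containment $b_ixb_i^*\in B_i$ gives $\widetilde\psi_i(b_ixb_i^*)=\psi_i(b_ixb_i^*)$, so
\[
\|x-\Phi\Psi(x)\|\le\Bigl\|x-\sum_i b_ixb_i^*\Bigr\|+\sum_i\|\phi_i\psi_i(b_ixb_i^*)-b_ixb_i^*\|<\epsilon.
\]
Since $F$ has $(d+1)(n+1)$ summands, this gives $\dim_\nuc(A)+1\le(d+1)(n+1)$, from which the two stated bounds follow by the trivial inequalities $n+1\le\dim\widehat M+n+2$ and $n+1\le\max\{n,\dim\widehat M\}+1$.

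The principal technical delicacy is verifying that $\Psi$ is completely positive and contractive: complete positivity comes from CP of each $\widetilde\psi_i$ composed with the CP map $a\mapsto b_iab_i^*$, and contractivity from $\|b_iab_i^*\|\le\|b_i\|^2\|a\|\le\|a\|$. If one prefers to derive the bound in exactly the form written in the proposition (rather than the slightly tighter $(d+1)(n+1)-1$), the alternative is to pass to the unital ambient algebra $\tilde A:=A+M\subset M(A)$, extend the hypothesis to include elements of $M$ (for which $\sum_i b_imb_i^*=m$ holds exactly), apply the unital \Cref{nuclear dim via sublagebras} to $\tilde A$ with enlarged subalgebras $C_i\supset B_i\cup M$, and bound $\dim_\nuc(C_i)\le n+\dim\widehat M+1$ via the extension inequality \cite[Proposition~2.9]{WinterZacharias:nuclear}, or $\dim_\nuc(C_i)\le\max\{n,\dim\widehat M\}$ in the subhomogeneous case via \Cref{subhomg extension}; in either case, $\dim_\nuc(A)\le\dim_\nuc(\tilde A)$ yields the claim.
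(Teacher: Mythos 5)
Your proposal is correct, and your primary argument takes a genuinely different route from the paper's --- indeed it proves slightly more. The paper's proof is exactly the ``alternative'' you sketch in your last paragraph: it passes to the unital algebra $C=A+M$, notes that $\operatorname{span}(X\cup M)$ is dense in $C$, enlarges the subalgebras to $C_i=B_i+M$ so that $b_iyb_i^*\in C_i$ and $\sum_i b_iyb_i^*=y$ exactly for $y\in M$ (this is where commutativity of $M$ and $\sum_i b_ib_i^*=1$ are really used), bounds $\dim_\nuc(C_i)\le n+\dim(\widehat M)+1$ by \cite[Proposition~2.9]{WinterZacharias:nuclear} (or by $\max\{n,\dim(\widehat M)\}$ via \Cref{subhomg extension} in the separable subhomogeneous case), and then applies \Cref{nuclear dim via sublagebras} to $C$ together with $\dim_\nuc(A)\le\dim_\nuc(C)$, since $A$ is an ideal in $C$. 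Your main argument instead reopens the proof of \Cref{nuclear dim via sublagebras} and runs it directly on the non-unital $A$, using that $a\mapsto b_iab_i^*$ is a completely positive contraction of $A$ for multipliers $b_i$ of norm at most $1$, together with Arveson extension into the injective finite-dimensional targets; this is sound --- the extension from the possibly non-unital $B_i\subset A$ is handled by the usual passage to unitizations, and the reduction from finite subsets of the spanning set $X$ to arbitrary finite subsets of $A$ uses the uniform bound $\|\Phi\Psi\|\le(d+1)(n+1)$, exactly as in the proof of \Cref{nuclear dim via sublagebras} itself (note that both you and the paper read the hypothesis as holding for all $0\le i\le d$; the ``$1\le i\le d$'' in the statement is evidently a typo). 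What your route buys is the sharper and cleaner bound $\dim_\nuc\plusone(A)\le(d+1)(n+1)$, with no contribution from $\dim(\widehat M)$ and with $\sum_i b_ib_i^*=1$ and commutativity of $M$ needed only to guarantee $\|b_i\|\le1$; the two stated bounds then follow trivially, as you observe. What the paper's route buys is brevity --- it quotes \Cref{nuclear dim via sublagebras} as a black box rather than repeating its Arveson-extension argument --- at the price of the extra $\dim(\widehat M)$ term, which is harmless in the intended application (\Cref{thm-twist-and-recover}), where $n=\dim(G\z)=\dim(\widehat M)$, so both routes yield the same final estimate there.
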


\begin{proof} 
  Let $C$ be the $\Cst$-algebra generated by $A$ and $M$.  Then
  $C=A+M$ and $A$ is an ideal of the unital $\Cst$-algebra $C$.  Since
  $\lsp(X)$ is dense in $A$, $\lsp(X\cup M)$ is dense in $C$.

  Fix $\epsilon>0$ and let $\Ff\subset X\cup M$ be a finite subset.
  Then $\Ff\cap X$ is a finite subset of $X$. By assumption, there
  exist $\Cst$-subalgebras $B_0,\ldots ,B_d$ of $A$ and
  $b_0,\ldots ,b_d\in M$ such that $\sum_{i=0}^d b_ib_i^*=1$, and for
  $1\leq i\leq d$ and $x\in \Ff\cap X$ we have
  $\dim_{\nuc}(B_i)\leq n$, $b_{i}xb_{i}^*\in B_{i}$ and
  $\big\|x-\sum_{i=0}^d b_ixb_i^*\big\|<\epsilon$.

  For each $i$, let $C_i$ be the $\Cst$-algebra generated by $B_i$ and
  $M$, so that $C_i=B_i+M$ and $B_i$ is an ideal in $C_i$.  By
  \cite[Proposition~2.9]{WinterZacharias:nuclear}
  \begin{equation}\label{eq-extension-lemma}
    \dim_\nuc(C_i)\leq \dim_\nuc(B_i)+\dim_\nuc(M)+1=n+\dim(\widehat M)+1
  \end{equation}
  since $M$ is commutative.  Each $b_i$ has norm at most $1$ in $C$
  and for all $y\in \Ff$ we have $b_iyb_i^*\in C_i$ and
  $\big\|y-\sum_{i=0}^d b_iyb_i^*\big\|<\epsilon$.  Now \Cref{nuclear
    dim via sublagebras} applied to $C$ and $X\cup M$ implies that
  \[\dim_\nuc(A)\leq \dim_\nuc(C)\leq (d+1)\big(n+\dim(\hat
    M)+2\big)-1.\] If all the $B_i$ are separable and subhomogeneous,
  then using \Cref{subhomg extension} at \Cref{eq-extension-lemma} we
  get $\dim_\nuc(C_i)= \max\{n, \dim(\widehat M)\}$; we then apply
  \Cref{nuclear dim via sublagebras} to get the better bound.
\end{proof}

\begin{lemma}\label{commutator with bisection2}
  Let $G$ be an \'etale groupoid and let $(\Sigma,\iota,\pi)$ be a
  twist over $G$.
  \begin{enumerate}
  \item\label{X} Set
    \[
      X\coloneqq\{f\in C_c(\Sigma): \pi(\supp(f))\text{\ is a
        bisection in $G$}\}
    \]
    Then $C_c(\Sigma)=\lsp(X)$.
  \item\label{commutator with bisection} Let $K$ be a compact subset
    of $\Sigma$ and let $W$ be an open precompact neighbourhood of $K$
    in $\Sigma$. Let $V_\red\colon C_{0}(\Sigma\z)\to M(\cs_{r}(\Sigma))$
    be the map of \Cref{rem V_r} and let $h\in C_0(\Sigma\z)$. Then
    for all $f\in X$ with $\supp(f)\subset K$ we have
    \[
      \|V_\red(h)f-fV_\red(h)\|_\red\leq
      \sup_{\gamma\in\pi(W)}|h(r(\gamma))-h(s(\gamma))|\|f\|_\red.
    \]
  \end{enumerate}
\end{lemma}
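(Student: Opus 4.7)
The plan is to prove both parts by direct construction, with part~(1) a standard partition-of-unity argument and part~(2) the non-trivial step that reduces the commutator to a single multiplier acting on $f$.

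For part~\cref{X}, I would use that $G$ is \'etale, so every point of $G$ has an open-bisection neighbourhood. Given $f\in C_c(\Sigma)$, the set $\pi(\supp f)$ is compact in $G$, so it is covered by finitely many open bisections $U_{1},\dots,U_{n}$. Their preimages $\pi^{-1}(U_{i})$ are open in $\Sigma$ and cover the compact set $\supp f$. Choosing a partition of unity $\sset{\phi_{i}}$ on $\supp f$ subordinate to $\sset{\pi^{-1}(U_{i})}$ (for instance by pulling back one from $G$ along $\pi$) gives $f=\sum_{i}\phi_{i}f$ with $\pi(\supp(\phi_{i}f))\subset U_{i}$, hence each $\phi_{i}f\in X$.

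For part~\cref{commutator with bisection}, the key computation is that
\[
  \bigl(V_{\red}(h)f-fV_{\red}(h)\bigr)(e)=\bigl(h(r(e))-h(s(e))\bigr)f(e),
\]
so the commutator is supported in $\supp f\subset K\subset W$. The trick I would use is to realise this commutator as a \emph{single} multiplier applied to $f$ from the left. Let $B\coloneqq\pi(\supp f)$, which by hypothesis is a bisection, so $r|_{B}$ is injective. Define $\psi\colon r(B)\to\mathbf{C}$ by $\psi(r(\gamma))=h(r(\gamma))-h(s(\gamma))$ for $\gamma\in B$; this is well-defined and continuous since $r|_{B}$ is a continuous bijection onto the compact set $r(B)$. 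Since $r(B)$ is closed in the normal space $G\z$, Tietze extension (followed by the standard truncation $\tilde\psi\mapsto (\tilde\psi/|\tilde\psi|)\min(|\tilde\psi|,\|\psi\|_{\infty})$) produces $\tilde\psi\in C_{b}(G\z)$ extending $\psi$ with $\|\tilde\psi\|_{\infty}=\|\psi\|_{\infty}$.

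For $e\in\supp f$ we have $r(e)=r(\pi(e))\in r(B)$, and hence $\tilde\psi(r(e))=h(r(e))-h(s(e))$. Since both sides vanish off $\supp f$, we conclude that $V_{\red}(h)f-fV_{\red}(h)=V_{\red}(\tilde\psi)f$ as elements of $\Cst_{\red}(\Sigma)$. Then
\[
  \|V_{\red}(h)f-fV_{\red}(h)\|_{\red}
  =\|V_{\red}(\tilde\psi)f\|_{\red}
  \le\|\tilde\psi\|_{\infty}\|f\|_{\red}
  \le\sup_{\gamma\in\pi(W)}\bigl|h(r(\gamma))-h(s(\gamma))\bigr|\,\|f\|_{\red},
\]
since $B\subset\pi(W)$ and $V_{\red}$ is a contractive homomorphism. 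The main obstacle is the Tietze step: we need $\psi$ to be well-defined as a continuous function on $r(B)$ (which uses precisely that $B$ is a bisection and hence that $r|_{B}$ is a homeomorphism onto its image), and we need the extension to preserve the sup-norm bound, which requires the truncation refinement after the standard Tietze extension. Everything else is a direct calculation using the formulas for $V_{\red}$ from \Cref{rem V_r}.
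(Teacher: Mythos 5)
Your proof is correct. Part \partref{1} is the same partition-of-unity argument as the paper's (the paper covers an open precompact neighbourhood $W$ of $\supp f$ by preimages of bisections and takes the partition of unity on $W$; the difference is cosmetic, and in both cases one only needs that a subset of a bisection is a bisection). For part \partref{2} you take a genuinely different route. The paper works inside each regular representation $L^{u}$: it unwinds the convolution against the Haar system on $\Sigma$, uses injectivity of $r$ on $\pi(\supp f)$ to see that only one $\alpha_{e}$ contributes, and obtains the pointwise estimate $|\bigl(L^{u}(V_\red(h)f-fV_\red(h))\xi\bigr)(e)|\le \sup_{\gamma\in\pi(W)}|h(r(\gamma))-h(s(\gamma))|\,|\bigl(L^{u}(f)\xi\bigr)(e)|$, which gives the operator bound fibrewise. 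You instead factor the commutator as a single multiplier acting on $f$: since $r$ restricted to the compact set $B=\pi(\supp f)$ is a continuous bijection onto its (compact, hence closed) image and therefore a homeomorphism, the function $\psi(r(\gamma))=h(r(\gamma))-h(s(\gamma))$ is well defined and continuous on $r(B)$, and after a norm-preserving Tietze extension you get $V_\red(h)f-fV_\red(h)=V_\red(\tilde\psi)f$ in $C_{c}(\Sigma)$, so contractivity of the homomorphism $V_\red$ finishes the estimate. Both arguments use the bisection hypothesis in exactly the same way (on $\supp f$, the value $r(e)$ determines $s(e)$); what yours buys is that it avoids the Haar-system computation on the non-\'etale $\Sigma$ entirely, proves the slightly stronger statement that the commutator is $V_\red(\tilde\psi)f$, and in fact gives the sharper constant $\sup_{\gamma\in\pi(\supp f)}|h(r(\gamma))-h(s(\gamma))|$ (the set $W$ only matters for the later application, where $h$ is controlled on $\pi(W)$); what the paper's route buys is that it needs no extension theorem and stays entirely within the representations used to define $\|\cdot\|_{\red}$. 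Two small points to keep honest: normality of $\Sigma\z$ for Tietze is covered by the standing second-countability assumption (or by passing to the one-point compactification), and the truncation after extending real and imaginary parts is indeed needed to preserve the sup-norm, as you note.
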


\begin{proof}
  Fix $f\in C_c(\Sigma)$. Let $W$ be an open precompact neighbourhood
  of $\supp(f)$. Since $G$ is \'etale there is an open cover
  $\{U_i\}_{i=1}^n$ of $\pi(W)$ such that each $U_i$ is a bisection.
  Set $W_i\coloneqq W\cap\pi^{-1}(U_i)$ and let $\{f_i\}_{i=1}^n$ be a
  partition of unity of $W$ subordinate to $\{W_i\}_{i=1}^n$.  Extend
  each $f_i\colon W\to [0,1]$ to $f_i\colon \Sigma\to[0,1]$ by
  $f_i(e)=0$ if $e\notin W$. Then it is straightforward to check that
  for each $i$ the point-wise product $f\cdot f_i\in C_c(\Sigma)$ with
  $\pi(\supp(f\cdot f_i))\subset U_i$.  Since
  $f=f\cdot \sum_{i=1}^n f_i=\sum_{i=1}^n f\cdot f_i$. This proves
  \cref{X}.

  For \cref{commutator with bisection}, let $f\in X$ with
  $\supp(f)\subset K$.  Fix $u\in \Sigma\z $ and let
  $L^{u}
  \colon C_c(\Sigma)\to
  B(L^{2}(\Sigma_{u},\lambda_{u}))$, so that
  \[
    \big(L^{u}(f)\xi\big)(e)
    =\int_{\Sigma}f(e_{1})\xi(e_{1}^{-1}e)\,\dd\lambda^{r(e)}(e_{1})
  \]
  for $\xi\in C_{c}(\Sigma_{u}) \subset L^{2}(\Sigma_{u},\lambda_{x})$
  and $e\in \Sigma_{u}$.  Then
  \begin{align*}
    \big(L^{u}
    &(V_\red(h)f-fV_\red(h))\xi\big)(e) 
    \\ 
    &=\sum_{\alpha\in r(e)G}\int_\TT\big(h(r(t\cdot\mathfrak{c}(
      \alpha)))-(h(s(t\cdot\mathfrak{c}(\alpha)))
      f(t\cdot\mathfrak{c}(\alpha))\xi(  
      t\inv \cdot\mathfrak{c}(\alpha)\inv e )\, \dd t\\ 
    &=\sum_{\alpha\in r(e)G} \big(h(r(\mathfrak{c}(
      \alpha)))-h(s(\mathfrak{c}(\alpha)))\big)\int_\TT
      f(t\cdot\mathfrak{c}(\alpha))\xi( t\inv
      \cdot\mathfrak{c}(\alpha)\inv e )\, \dd t. 
  \end{align*}
  Notice that $t\cdot\mathfrak{c}(\alpha)\in\supp(f)$ implies that
  $\alpha\pi(t\cdot\mathfrak{c}(\alpha))\in\pi(\supp(f))$ which is a
  bisection.  So there is at most one $\alpha_e\in\pi(\supp(f))$ with
  $r(\alpha)=r(e)$. Thus either
  \[\big(L^{u}(V_\red(h)f-fV_\red(h))\xi\big)(e)=0=\big(L^u(f)\xi\big)(e)\]
  or
  \begin{align*}
    \big(L^{u}(V_\red(h)
    &f-fV_\red(h))\xi\big)(e) \\
    &=\big(h(r(\mathfrak{c}(
      \alpha_e)))-h(s(\mathfrak{c}(\alpha_e)))\big)\int_\TT
      f(t\cdot\mathfrak{c}(\alpha_e))\xi( t\inv
      \cdot\mathfrak{c}(\alpha_e)\inv e )\, \dd t\\ 
    &=\big(h(r(\mathfrak{c}(
      \alpha_e)))-h(s(\mathfrak{c}(\alpha_e)))\big)\sum_{\alpha\in
      r(e)G}\int_\TT f(t\cdot\mathfrak{c}(\alpha))\xi( t\inv
      \cdot\mathfrak{c}(\alpha)\inv e )\, \dd t\\ 
    &=\big(  h(r(\alpha_e))-h(s( \alpha_e))\big)\big(L^u(f)\xi\big)(e).
  \end{align*}
  In either case,
  \begin{align*}
    \|L^{u}(V_\red(h)f-fV_\red(h))\xi\|^2
    &=\int_\Sigma |\big(L^{u}(V_\red(h)f-fV_\red (h))\xi\big)(e)|^2\, \dd\lambda_x(e)\\
    &\leq \sup_{\gamma\in \pi(W)}|h(r(\gamma))-h(s(
      \gamma))|^2\|L^u(f)\xi\|^2, 
  \end{align*}
  and the result follows.
\end{proof}

\begin{proof}[Proof of \Cref{thm-twist-and-recover}]
  The proof is very similar to that of \Cref{main thm}. There
  $\rho\colon G\to G/\Iso(G)$ was a quotient map onto the principal
  groupoid $G/\Iso(G)$ which had finite dynamic asymptotic
  dimension. Here $\pi\colon \Sigma\to G$ is the quotient map onto the
  not-necessarily principal $G$ which has finite dynamic asymptotic
  dimension. In both, the finite dynamic asymptotic dimension gives
  precompact subgroupoids which pull back to subhomogeneous
  subgroupoids whose $\Cst$-algebras have uniformly bounded nuclear
  dimension.

  We may assume that $\dim(G\z)=N$ and $\dad(G)=d$ are finite.  First
  assume that $\Sigma\z=G\z$ is compact.  By \Cref{commutator with
    bisection2},
  \[
    X=\set{f\in C_c(\Sigma): \text{$\pi(\supp(f))$ is a bisection in
        $G$}}
  \]
  has dense span in $\Cst(\Sigma)$.  Fix $\epsilon>0$ and a finite
  subset of $\Ff$ of $X$.  There exists a compact subset $K=K^{-1}$ of
  $\Sigma$ such that $\Sigma\z\subset K$ and $f\in \Ff$ implies
  $\supp f\subset K$.

  Let $W\subset \Sigma$ be an open, symmetric and precompact
  neighborhood of $K$. Then $\pi(W)$ is an open precompact
  neighbourhood in $G$. Since $\dad(G)=d$, applying
  \cite[Theorem~7.1]{GWY:2017:DAD} to the precompact, open subset
  $\pi(W)$ of $G$ with
  $\epsilon\bigl( (d+1)\max_{a\in \mathcal{F}}\|a\|\bigr)^{-1}$ gives
  open sets $U_0, \dots, U_d$ covering $G\z$ such that for
  $0\leq i\leq d$ the subgroupoids $H_i$ of $G$ generated by
  $\set{\gamma\in \pi(W): s (\gamma), r (\gamma)\in U_i}$ are open and
  precompact, and there exist $ h_i \in C(G\z)$ with support in $U_i$
  such that $0\leq h_i\leq 1$ and $\sum_{i=0}^d h_{i}^2=1_{C(\go)}$,
  and
  \begin{equation*}
    \sup_{\gamma\in \pi(W)}|h_{i}(s(\gamma))- h_{i}(r((\gamma))|<
    \epsilon\big((d+1)\max_{a\in \mathcal{F}}\|a\|\big)\inv. 
  \end{equation*}
  
  Since each $H_i$ is precompact in the \'etale groupoid $G$, there
  exists $M_i$ such that for all $u\in H_i\z$ we have
  $|(H_i)_u|\leq M_i$ (see proof of
  \cite[Proposition~4.3(1)]{CDaHGV}).  Thus $\pi\inv(H_i)$ is an open
  subgroupoid of $\Sigma$ which is subhomogeneous and has compact
  isotropy subgroups by \Cref{inverse image of groupoid with finite
    dad}. We identify $\Cst_\red(\pi\inv(H_i))$ with a
  $\Cst$-subalgebra $B_i$ of $\Cst_\red(\Sigma)$. Since $\pi\inv(H_i)$
  is amenable,
  \Cref{nuclear-dim-for-subhomog-abelian-or-compact-isotropy} gives
  \[\dim_\nuc (\Cst_\red(\pi\inv(H_i)))\leq\dim(U_i) \leq \dim (G\z).\]

  Let $V_\red\colon C(\Sigma\z)\to M(\cs_\red(\Sigma))$ be
  (restriction of) the map from \Cref{rem V_r}.  The calculations that
  verify the hypotheses of \Cref{abstraction-nonunital} with respect
  to $\epsilon$, $\Ff$, $B_i$ and
  $V_\red(h_i)\in V_\red(C(G\z))\subset M(\Cst_\red(\Sigma))$ are the
  same as those in the proof of \Cref{main thm}.  Since the $B_i$ are
  separable and subhomogeneous, \Cref{abstraction-nonunital} gives
  \[\dim\plusone_\nuc(\Cst_\red(\Sigma))\leq
    \dad\plusone(G)\dim\plusone(G\z)
  \]
  when $\Sigma\z$ is compact.

  Next, suppose that $\Sigma\z=G\z$ is not compact. Let $\cG$ be the
  Alexandrov groupoid of \cite[Lemma~3.4]{CDaHGV}, and let
  $(\widetilde \Sigma, \tilde\iota, \tilde\pi)$ be the twist over
  $\cG$ of \cite[Lemma~3.6]{CDaHGV}. Then $\cG\z=\widetilde \Sigma\z$
  is compact. We have $\dad(\cG)=\dad(G)$ by
  \cite[Proposition~3.13]{CDaHGV}, and $\dim(\cG\z)=\dim(G\z)$ by
  \cite[Lemma~2.6]{CDaHGV}. Notice that
  $\widetilde \Sigma=\Sigma\cup\{\infty_z:z\in\TT\}$ and
  $\{\infty_1\}$ is a closed invariant subset of $\tilde
  \Sigma\z$. The restriction of $\widetilde \Sigma$ to
  $\widetilde{\Sigma}\z\setminus\{\infty_1\}$ is $\Sigma$. Since the
  full and reduced norms on $C(\TT)$ agree, by
  \cite[Proposition~5.2]{Williams:groupoid} there is an exact sequence
  \[
    \begin{tikzcd}
      0\arrow[r]&\cs_{r}(\Sigma)\arrow[r]& \cs_{r}(\widetilde \Sigma)
      \arrow[r]& C(\TT) \arrow[r]&0.
    \end{tikzcd}
  \]
  Since $\tilde \Sigma$ has compact unit space, by
  \cite[Proposition~2.9]{WinterZacharias:nuclear} and the compact case
  above,
  \[\dim\plusone_\nuc(\Cst_\red(\Sigma))\leq
    \dad\plusone(G)\dim\plusone(G\z).\qedhere
  \]
\end{proof}

\printbibliography
\end{document}
